\definecolor{crimson}{rgb}{0.85, 0.08, 0.4}
\definecolor{bleudefrance}{rgb}{0.2, 0.5, 0.9}
\newtheorem{theorem}{Theorem}[section]
\newtheorem{proposition}{Proposition}[section]
\newtheorem{corollary}[proposition]{Corollary}
\newtheorem{lemma}[proposition]{Lemma}
\newtheorem{definition}[proposition]{Definition}
\theoremstyle{remark}
\newtheorem{remark}[proposition]{Remark}
\numberwithin{equation}{section}
\DeclareMathOperator{\dimaff}{dim_{\mathsf{aff}}}
\DeclareMathOperator{\dimlyap}{dim_{\mathsf{Lyap}}}
\DeclareMathOperator{\dimh}{dim_{\mathsf{H}}}
\DeclareMathOperator{\GL}{GL}
\DeclareMathOperator{\OO}{O}
\DeclareMathOperator{\SL}{SL}
\DeclareMathOperator{\rk}{rank}
\DeclareMathOperator{\id}{id}
\DeclareMathOperator{\iiid}{id}
\DeclareMathOperator{\End}{End}
\DeclareMathOperator{\Mat}{Mat}
\DeclareMathOperator{\diam}{diam}
\newcommand{\triple}[1]{{\left\vert\kern-0.25ex\left\vert\kern-0.25ex\left\vert #1 
    \right\vert\kern-0.25ex\right\vert\kern-0.25ex\right\vert}}
\newcommand{\iii}{\mathtt{i}}
\newcommand{\I}{\mathcal{I}}
\newcommand{\threebar}[1]{{\left\vert\kern-0.25ex\left\vert\kern-0.25ex\left\vert #1 
    \right\vert\kern-0.25ex\right\vert\kern-0.25ex\right\vert}}
\newcommand{\J}{\mathcal{J}}
\newcommand{\jjj}{\mathtt{j}}
\newcommand{\kkk}{\mathtt{k}}
\newcommand{\lll}{\mathtt{l}}
\newcommand{\R}{\mathbb{R}}
\newcommand{\N}{\mathbb{N}}
\title[From self-affine measures to self-affine sets]{A variational principle relating self-affine measures to self-affine sets}
\author{Ian D. Morris and Cagri Sert} 
\email{i.morris@qmul.ac.uk }
\email{sertcagri@gmail.com}
\begin{document}

\begin{abstract}
A breakthrough result of B\'ar\'any, Hochman and Rapaport published in 2019 established that every self-affine measure on $\R^2$ satisfying certain mild non-degeneracy conditions has Hausdorff dimension equal to its Lyapunov dimension. In combination with a variational principle established earlier by Morris and Shmerkin this result implied as a corollary that the attractor of a planar affine iterated function system satisfying the same conditions necessarily has Hausdorff dimension equal to a value proposed by Falconer in 1988. In this article we extend the variational principle of Morris and Shmerkin from the planar context to arbitrary dimension. This allows a recent theorem of Rapaport on the dimensions of self-affine measures in $\R^3$ to be extended into a characterisation of the dimensions of the corresponding self-affine subsets of $\R^3$.  At the core of the present work is an algebraic result concerned with finding large Zariski-dense Schottky semigroups inside a given finitely generated completely reducible semigroup of linear transformations.
\end{abstract}
\maketitle

\section{Introduction and principal results}

An iterated function system (which we will usually abbreviate to IFS)  is conventionally defined to be any finite collection $(T_i)_{i \in \I}$ of contracting transformations of a complete metric space $X$. A now-classical theorem of J.E. Hutchinson (see \cite{Hu81}) states that if $(T_i)_{i \in \I}$ is an iterated function system acting on such a space $X$ then there exists a unique nonempty compact subset $Z$ of $X$ satisfying the equation $Z=\bigcup_{i\in \I} T_iZ$, and this set is usually called the \emph{attractor} of $(T_i)_{i \in \I}$. If additionally a probability vector $(p_i)_{i \in \I}$ is specified then there moreover exists a unique Borel probability measure $m$ on $X$ which satisfies $m=\sum_{i\in \I} p_i(T_i)_*m$. In this article we will always be concerned with iterated function systems which consist of invertible affine transformations of $\R^d$ and which contract a metric on $\R^d$ which is induced by a norm (which need not be the Euclidean norm). To avoid trivialities, we assume at all times that the cardinality of $\I$ is at least $2$.

We recall that an IFS $(T_i)_{i \in \I}$ acting on $\R^d$ is said to satisfy the \emph{open set condition} if there exists a nonempty open set $U \subseteq \R^d$ such that the images $T_iU$ for $i \in \I$ are pairwise disjoint subsets of $U$, and is said to satisfy the \emph{strong open set condition} if additionally the set $U$ can be chosen so as to have nonempty intersection with the attractor. In both cases, by replacing $U$ with its intersection with a large open ball we may freely assume that $U$ is bounded, and we will always make this assumption in the sequel. The IFS $(T_i)_{i \in \I}$ is further said to satisfy the \emph{strong separation condition} if there exists a nonempty compact set $X \subseteq \R^d$ such that the images $T_iX$ for $i \in \I$ are pairwise disjoint subsets of $X$.  If $(T_i)_{i \in \I}$ satisfies the open set condition and if each $T_i$ is a similarity transformation -- that is, if each $T_i$ has the form $T_ix\equiv r_iO_ix+v_i$ with $r_i \in (0,1)$, $O_i \in \OO(d)$ and $v_i \in \R^d$ -- it is classical that the Hausdorff and box dimensions of the attractor are both equal to the \emph{similarity dimension} of $(T_i)_{i \in \I}$, which is defined to be the unique $s \in (0,d]$ which satisfies $\sum_{i\in\I} r_i^s = 1$. Moreover, in this case the Hausdorff dimension of any self-similar measure $m=\sum_{i\in \I} p_i (T_i)_*m$ is precisely equal to $\sum_{i \in \I} p_i\log p_i / \sum_{i\in \I} p_i \log r_i$, which is at most the similarity dimension of $(T_i)_{i \in \I}$. Taking $p_i:=r_i^s$ where $s$ denotes the similarity dimension yields a self-similar measure whose Hausdorff dimension equals that of the attractor.

When the above hypotheses are relaxed so as to allow the maps $T_i$ to be invertible affine transformations the situation becomes much more delicate. Similarly to the above, the attractor of an affine iterated function system $(T_i)_{i \in \I}$ will be called a self-affine set, and a Borel probability measure on $\R^d$ satisfying an equation of the form $m=\sum_{i \in \I} p_i(T_i)_*m$ will be called a self-affine measure. Suppose that the affine transformations $T_i$ are each written in the form $T_ix\equiv A_ix+v_i$ with $A_i \in \GL_d(\R)$ and $v_i \in \R^d$, which we describe by  saying that each $A_i$ is the \emph{linearisation} of the corresponding affine map $T_i$. We recall that the \emph{singular values} of $A \in \GL_d(\R)$ are the positive square roots of the eigenvalues of the positive definite linear map $A^\top A$, and are conventionally denoted $\sigma_1(A),\ldots,\sigma_d(A)$ in decreasing order, with each value being repeated if necessary in the case of a multiple eigenvalue. The \emph{singular value function} described by K. Falconer in \cite{Fa88} (though for some antecedent ideas see \cite{douady-oesterle,kaplan-yorke}) is defined by
\[\varphi^s(A):=\left\{\begin{array}{cl}\sigma_1(A) \cdots \sigma_{\lfloor s\rfloor}(A) \sigma_{\lceil s\rceil}(A)^{s-\lfloor s\rfloor}&\text{if }0 \leq s \leq d,\\ |\det A|^{\frac{s}{d}}&\text{if }d \geq s,\end{array}\right.\] 
where $A \in \GL_d(\R)$. The \emph{affinity dimension} of $(T_i)_{i \in \I}$, denoted $\dimaff (T_i)_{i \in \I}$, may then be defined to be the unique $s \geq 0$ such that
\[\lim_{n \to \infty} \frac{1}{n} \log\sum_{i_1,\ldots,i_n \in \I} \varphi^s(A_{i_1}\cdots A_{i_n})=0,\]
and the \emph{Lyapunov dimension} of a self-affine measure $m=\sum_{i \in \I} p_i(T_i)_*m$ is defined to be the unique non-negative solution $t$ to the equation
\[\lim_{n \to \infty} \frac{1}{n} \sum_{i_1,\ldots,i_n \in \I} p_{i_1}\cdots p_{i_n}\log \varphi^t(A_{i_1}\cdots A_{i_n})=\sum_{i \in \I} p_i\log p_i. \]
When the transformations $T_i$ are all similitudes the affinity dimension is easily seen to be equal to the similarity dimension defined previously. The Lyapunov dimension may also be defined for a more general class of measures induced by an affine IFS, but we defer this more general definition to the following section.

Since the publication of Falconer's article in 1988, a long-standing programme of research has been dedicated to finding explicit conditions on an affine IFS which guarantee that its attractor has Hausdorff dimension equal to the affinity dimension and that its self-affine measures all have Hausdorff dimension equal to their Lyapunov dimensions. It has long been known that these conclusions do not follow from the open set condition alone, nor even from the strong separation condition \cite{Be84,Mc84}, but it was established in \cite{Fa88} that if a sufficiently strongly contracting tuple of linear maps $(A_i)_{i\in \I}\in \GL_d(\R)^\I$ is specified, then for Lebesgue a.e. $(v_i)_{i \in \I} \in (\R^d)^\I$ the attractor of the IFS $(T_i)_{i\in \I}$ defined by $T_ix:=A_ix+v_i$ has Hausdorff dimension equal to its affinity dimension. (A corresponding result for self-affine measures followed rather later in \cite{JoPoSi07}.) On the other hand this result gave no \emph{explicit} examples of IFS for which the Hausdorff dimension of the attractor equals the affinity dimension, nor any verifiable sufficient condition for this property to hold.

Let us say that $(A_i)_{i \in \I}$ is \emph{irreducible} if there does not exist a nonzero proper vector subspace $V$ of $\R^d$ which is preserved by every $A_i$, and \emph{strongly irreducible} if no finite union of such subspaces is preserved by every $A_i$. For the purposes of this introduction we will also call $(A_i)_{i \in \I}$ \emph{proximal} if the semigroup generated by $\{A_i \colon i \in \I\}$ contains a linear map whose first- and second-largest eigenvalues have distinct absolute values. (The notion of proximality will be revisited in greater depth in \S\ref{sec.preliminaries} below.) We call an IFS $(T_i)_{i \in \I}$ irreducible, strongly irreducible or proximal if its vector of linearisations $(A_i)_{i \in \I}$ has the corresponding property. Building on a substantial body of research subsequent to \cite{Fa88}, which we do not attempt to survey here, the following breakthrough result of B. B\'ar\'any, M. Hochman and A. Rapaport was published in 2019 in \cite{BaHoRa19}:
\begin{theorem}[B\'ar\'any-Hochman-Rapaport]\label{th:bahora}
Let $(T_i)_{i \in \I}$ be a proximal and strongly irreducible affine iterated function system acting on $\R^2$ which satisfies the strong open set condition. Then:
\begin{enumerate}[(i)]
\item\label{it:bahora-one}
If $(p_i)_{i \in \I}$ is a non-degenerate probability vector then the self-affine measure $m=\sum_{i \in \I} p_i(T_i)_*m$ has Hausdorff dimension equal to its Lyapunov dimension.
\item\label{it:bahora-two}
The Hausdorff dimension of the attractor of $(T_i)_{i \in \I}$ is precisely $\dimaff (T_i)_{i \in \I}$.\end{enumerate}
\end{theorem}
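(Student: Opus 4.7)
The plan is to prove part (i) first and then derive part (ii) as a short consequence of (i) combined with a variational principle. For (i), I would follow a three-step strategy: (a) establish exact dimensionality of the self-affine measure $m$; (b) prove a Ledrappier--Young style formula expressing $\dimh m$ in terms of the two Lyapunov exponents and the entropies of projections of $m$ along the unstable foliation of the linearised cocycle; and (c) show that these projected entropies attain their maximum permissible values, so that the resulting expression collapses to the Lyapunov dimension.

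For step (a), exact dimensionality of planar self-affine measures under strong irreducibility is available from earlier work of B\'ar\'any--K\"aenm\"aki and Feng--Hu, the key input being that the Oseledets decomposition of the random matrix cocycle $(A_{i_1}\cdots A_{i_n})_{n\geq 1}$ splits into two distinct Lyapunov exponents $\lambda_1>\lambda_2$ by proximality and strong irreducibility. Step (b) is obtained by disintegrating the Bernoulli measure on symbols along the stable foliations of this cocycle, carried out in the Ledrappier--Young framework adapted to self-affine systems. Step (c) is the heart of the argument: the projection of $m$ onto a typical unstable direction behaves like a one-dimensional random self-similar-type measure driven by the projected contractions. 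I would apply Hochman's inverse theorem on the entropy of convolutions to conclude that either this projected measure has the maximum possible dimension or the projected one-dimensional IFS exhibits super-exponential concentration of cylinders. The latter alternative is to be ruled out by combining the strong open set condition with the non-atomicity and regularity of the Furstenberg stationary measure on $\mathbb{RP}^1$, itself a consequence of strong irreducibility and proximality.

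For part (ii), the affinity dimension is an a priori upper bound for the Hausdorff dimension of the attractor $Z$ by Falconer's argument in \cite{Fa88}. For the matching lower bound I would invoke the Morris--Shmerkin variational principle in the plane: for every $\varepsilon>0$ there exists a probability vector $(p_i)_{i\in\I}$ whose corresponding self-affine measure has Lyapunov dimension at least $\dimaff (T_i)_{i\in\I}-\varepsilon$. Applying part (i) to that measure and using that its topological support lies inside $Z$ yields $\dimh Z \geq \dimaff (T_i)_{i\in\I}-\varepsilon$; letting $\varepsilon \to 0$ finishes the proof.

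I expect the principal obstacle to be step (c): upgrading the entropy lower bound to a true dimension statement via Hochman's theorem requires exponential separation of the symbolic coding after projection onto a typical unstable direction, which is not an immediate consequence of the strong open set condition in the plane. Establishing it seems to demand combining the Schottky-type dynamics of the projective action with a quantitative Diophantine-style estimate bounding the probability that two distinct long symbolic words produce exponentially close projected images, where the probability is taken against the Bernoulli measure and the direction against the Furstenberg measure.
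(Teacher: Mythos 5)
First, a point of comparison: the paper does not prove this theorem at all --- it is quoted from B\'ar\'any--Hochman--Rapaport \cite{BaHoRa19} as an external input, and the present article only describes (in the introduction) the mechanism by which \eqref{it:bahora-two} is deduced from \eqref{it:bahora-one}. Your sketch of part \eqref{it:bahora-one} is a reasonable outline of the actual strategy of \cite{BaHoRa19}: exact dimensionality and the Ledrappier--Young formula of B\'ar\'any--K\"aenm\"aki, followed by an entropy argument via Hochman's inverse theorem showing that projections of $m$ in Furstenberg-typical directions attain maximal dimension; you also correctly locate the hard point.

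Your deduction of \eqref{it:bahora-two} from \eqref{it:bahora-one}, however, contains a genuine error. You assert that for every $\varepsilon>0$ there is a probability vector $(p_i)_{i\in\I}$ whose self-affine measure has Lyapunov dimension at least $\dimaff(T_i)_{i\in\I}-\varepsilon$. This is false: for irreducible affine IFS that are not conjugate to systems of similitudes --- and a proximal planar system never is, since similitudes have eigenvalues of equal modulus --- the supremum over all probability vectors on $\I$ of the Lyapunov dimensions of the associated self-affine measures is \emph{strictly smaller} than the affinity dimension. This is the dimension gap of \cite{MoSe19}, quoted explicitly in the introduction of the present paper, and it is precisely the obstruction that makes the passage from \eqref{it:bahora-one} to \eqref{it:bahora-two} nontrivial. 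What Morris--Shmerkin actually supply (Theorem \ref{th:mosh} here) is, for each $\delta>0$, an integer $n$ and a subsystem $(T_\jjj)_{\jjj\in\J}$ with $\J\subseteq\I^n$ whose \emph{uniform} self-affine measure has Lyapunov dimension at least $\dimaff(T_i)_{i\in\I}-\delta$, and which moreover retains strong irreducibility and proximality and satisfies the strong separation condition. One then applies part \eqref{it:bahora-one} to this subsystem rather than to the original system, and uses that its attractor is contained in that of $(T_i)_{i\in\I}$. Establishing that the subsystem inherits the hypotheses of \eqref{it:bahora-one} is the entire content of Theorem \ref{th:mosh} (and, in higher dimensions, of Theorem \ref{th:main} of this paper); your argument skips this step and replaces it with a variational principle that does not hold.
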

The hypothesis of proximality can in fact be freely removed from Theorem \ref{th:bahora}, since when proximality is absent the remaining hypotheses imply that the linear maps $|\det A_i|^{-1/2}A_i$ preserve an inner product on $\R^2$. It then follows that up to a simultaneous change of basis for $\R^2$ the IFS $(T_i)_{i \in \I}$ consists of similarity transformations, so the conclusions \eqref{it:bahora-one}--\eqref{it:bahora-two} can be obtained in the non-proximal case from the classical theorem of Hutchinson.

The conclusions of Theorem \ref{th:bahora} can fail to hold when $(T_i)_{i \in \I}$ fails to be irreducible (see \cite{Be84,Mc84}) or when $(T_i)_{i \in \I}$ is irreducible but not strongly irreducible (see \cite{Fr12}). If the strong open set condition is weakened to the open set condition then it becomes possible for the attractor to be a singleton set (as is the case for an example noted by G.A. Edgar in \cite{Ed92}) and in this case the conclusions of Theorem \ref{th:bahora} clearly also do not hold. On the other hand the strong open set condition may nonetheless be weakened considerably to an \emph{exponential separation} condition which stipulates that two maps $T_{i_1}\cdots T_{i_n}$ and $T_{j_1}\cdots T_{j_n}$ must in a certain precise sense not be too similar to one another when $n$ is fixed and the vectors $(i_1,\ldots,i_n), (j_1,\ldots,j_n) \in \I^n$ are distinct, if it is additionally assumed that the contractions $T_i$ do not share a common fixed point (see \cite{HoRa22}).

A Ledrappier-Young formula established by B\'ar\'any and K\"aenm\"aki in \cite{BaKa17} relates the Hausdorff dimensions of self-affine measures on $\R^2$ to the Hausdorff dimensions of their projections onto one-dimensional subspaces, and as a consequence the proof of Theorem \ref{th:bahora}\eqref{it:bahora-one} is principally concerned with the estimation of the dimensions of these projections in ``typical'' directions. The Ledrappier-Young formula of B\'ar\'any and K\"aenm\"aki has since been extended to higher dimensions by D.-J. Feng \cite{Fe19}. An extension of Theorem \ref{th:bahora}\eqref{it:bahora-one} to affine IFS acting on $\R^3$ was recently obtained by A. Rapaport in \cite{Ra22} as follows:
\begin{theorem}[Rapaport]\label{th:rapa}
Let $(T_i)_{i \in \I}$ be a proximal and strongly irreducible affine iterated function system acting on $\R^3$. Then for every non-degenerate probability vector $(p_i)_{i \in \I}$, the self-affine measure $m=\sum_{i \in \I} p_i(T_i)_*m$ has Hausdorff dimension equal to its Lyapunov dimension.
\end{theorem}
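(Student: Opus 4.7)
The natural blueprint, following the planar strategy of \cite{BaHoRa19}, is a two-step reduction. First, express $\dimh m$ via a Ledrappier-Young type formula in terms of the Lyapunov exponents and the Hausdorff dimensions of projections of $m$ (or rather of a natural lift of $m$ to the symbolic space) onto dynamically determined subspaces; second, show that each such projected dimension attains its maximal admissible value. Under proximality and strong irreducibility of $(A_i)_{i\in\I}$, together with strong irreducibility of the induced action on $\wedge^2 \R^3$ (which should follow from a short algebraic argument), the Lyapunov exponents $\chi_1>\chi_2>\chi_3$ are simple, and Oseledets' theorem associates to almost every infinite word a flag $L\subset P\subset \R^3$ with $\dim L=1$ and $\dim P=2$. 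Applying Feng's higher-dimensional Ledrappier-Young formula \cite{Fe19}, one writes $\dimh m$ as a linear combination of the $\chi_k$'s weighted by the Hausdorff dimensions $D_1,D_2$ of the projections of the lifted measure along $L^{\perp}$ and $P^{\perp}$; the Lyapunov dimension corresponds to taking $D_1$ and $D_2$ equal to their respective maxima. The task therefore reduces to showing both projections attain maximal dimension almost surely.

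For the one-dimensional projection onto $L$, Furstenberg's theorem produces a unique stationary probability $\nu_1$ on the projective plane for the adjoint random walk, and $(\pi_L)_* m$ disintegrates as an integral over $\nu_1$ of self-similar-type measures on a line. I would then deploy Hochman's entropy-increment method uniformly in the random direction, the essential input being a non-concentration estimate ruling out near-coincidences among the compositions $A_{i_1}\cdots A_{i_n}$ after projection along a typical line. Such an estimate ought to follow from strong irreducibility of $(A_i)_{i \in \I}$ together with a Diophantine-type argument in the spirit of \cite{BaHoRa19}.

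The two-dimensional projection onto $P$ is where the genuine novelty of dimension three appears. The cocycle $(\wedge^2 A_i)_{i \in \I}$ on the Grassmannian of $2$-planes admits a unique stationary probability $\nu_2$, and the projection of $m$ onto $P$ is a randomised planar measure. One would like to invoke Theorem \ref{th:bahora}\eqref{it:bahora-one} fibrewise, but the induced planar system is not literally an affine IFS: both the linear parts and the translation vectors vary with the symbolic word. I would therefore attempt to reprove the planar result in a parametrised form for families of self-affine-like measures integrated over $\nu_2$. Two things must be checked: that the induced two-dimensional action on a $\nu_2$-typical plane remains proximal and strongly irreducible (expected from the three-dimensional hypotheses via a standard restriction argument), and that a parametrised analogue of exponential separation is available for the projected compositions uniformly in the direction.

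The principal obstacle is this last uniform non-concentration statement. The arguments of \cite{BaHoRa19} extract such rigidity from fixed affine data; in our situation the relevant affine data vary with the random plane $P$, so one needs an estimate that is uniform in the direction chosen by $\nu_2$. I expect the proof to rest on a Furstenberg-boundary analysis of the exterior square cocycle combined with a large-deviation argument controlling the separation of projected cylinders, and this ingredient will be the heart of the argument.
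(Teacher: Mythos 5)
This statement is not proved in the paper at all: it is Rapaport's theorem, quoted verbatim from \cite{Ra22} and used as a black box (the paper's contribution is Theorem \ref{th:main}, which converts it into Theorem \ref{th:rapco}). So there is no internal argument to compare against; the question is whether your proposal constitutes a proof, and it does not. What you have written is a plausible research outline in which every genuinely hard step is deferred: the maximality of the dimension of the projection onto the Furstenberg-typical line is something you "would then deploy" Hochman's method for, the non-concentration estimate "ought to follow", and the key uniform input for the two-dimensional projection is explicitly flagged as the unproven "heart of the argument". A proof sketch that identifies the main obstacle and leaves it open is not a proof; those deferred steps are precisely the content of \cite{Ra22}, which is a long and technically demanding paper building on \cite{HoRa22} and \cite{Fe19}.

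Two further concrete problems. First, the theorem as stated carries \emph{no} separation hypothesis of any kind, yet your strategy requires "a parametrised analogue of exponential separation ... uniformly in the direction". You are not entitled to assume any separation, so the entropy input must be manufactured in the presence of arbitrary overlaps; this is exactly why the proof must go through the Hochman--Rapaport overlaps machinery rather than the original \cite{BaHoRa19} argument, and your outline does not address it. Second, your parenthetical claim that strong irreducibility and proximality of the induced action on $\wedge^2\R^3$ "should follow from a short algebraic argument" is unjustified: $1$-proximality and $1$-strong irreducibility of $(A_i)_{i\in\I}$ do not formally imply the corresponding properties for $(A_i^{\wedge 2})_{i\in\I}$, so the simplicity of the gap $\chi_2>\chi_3$ (needed both for your Oseledets flag and for the Guivarc'h--Raugi input \cite{GuRa}) is an additional issue that the actual proof must confront rather than assume.
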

 It was shown by Falconer in \cite{Fa88} that the upper box dimension of the attractor of an affine IFS $(T_i)_{i\in \I}$ is always less than or equal to $\dimaff(T_i)_{i \in \I}$, and this implies in particular that the Hausdorff dimension of the attractor must be less than or equal to the affinity dimension. The difficulty in obtaining results along the lines of Theorem \ref{th:bahora}\eqref{it:bahora-two} therefore lies in bounding the dimension of the attractor from below. The present work is concerned with a mechanism by which results concerning the dimensions of self-affine measures -- such as Theorem \ref{th:bahora}\eqref{it:bahora-one} and Theorem \ref{th:rapa} above -- may be converted into sharp lower bounds on the dimensions of self-affine sets, such as Theorem \ref{th:bahora}\eqref{it:bahora-two}. 

When $(T_i)_{i \in \I}$ consists of similitudes there always exists a self-affine measure $m=\sum_{i \in \I}p_i (T_i)_*m$ which corresponds to a non-degenerate probability vector and has Lyapunov dimension equal to the affinity dimension. Since every such measure has support equal to the attractor of the IFS, this leads directly to the desired lower bound in the classical self-similar case of Hutchinson's article \cite{Hu81}. For irreducible affine IFS which do \emph{not} consist of similitudes a self-affine measure with the desired Lyapunov dimension cannot exist, and indeed the supremum of the Lyapunov dimensions of the possible self-affine measures is necessarily strictly smaller than the affinity dimension \cite{MoSe19}. Theorem \ref{th:bahora}\eqref{it:bahora-one} may therefore not be applied directly to deduce \eqref{it:bahora-two}, but is applied indirectly via an additional result as follows.

Given a finite set $\I$, let us say that a \emph{word} of length $n$ over $\I$ is a sequence of symbols $i_1i_2\cdots i_n$ such that every $i_j$ is an element of $\I$. We let $\I^n$ denote the set of all words of length $n$ over $\I$ and write $\I^*$ for the set of all words over $\I$ having arbitrary nonzero length. We denote the length of the word $\iii \in \I$ by $|\iii|$. If $\iii=i_1\cdots i_n$ and $\jjj=j_1\cdots j_m$ are words over $\I$ then we define their concatenation $\iii\jjj$ to be the word of length $|\iii|+|\jjj|$ defined by $\iii\jjj:=i_1i_2\cdots i_nj_1j_2\cdots j_m$. The operation of concatenation gives $\I^*$ the structure of a semigroup. If $(T_i)_{i \in \I}$ is an affine IFS then for every $\iii=i_1\cdots i_n\in \I^*$ we define $T_\iii:=T_{i_1}T_{i_2}\cdots T_{i_n}$, and similarly if $(A_i)_{i \in \I} \in \GL(V)^\I$ for some finite-dimensional real vector space $V$ then for every $\iii=i_1\cdots i_n\in \I^*$ we define $A_\iii:=A_{i_1}A_{i_2}\cdots A_{i_n}$. If $(T_i)_{i \in \I}$ is an IFS then for every $n \geq 1$ the affine IFS $(T_\iii)_{\iii \in \I^n}$ has the same attractor as $(T_i)_{i\in \I}$ and additionally satisfies $\dimaff (T_\iii)_{\iii \in \I^n}=\dimaff (T_i)_{i \in \I}$. We observe that strong irreducibility of $(T_\iii)_{\iii \in \I^n}$ is equivalent to strong irreducibility of $(T_i)_{i\in \I}$ and that the same equivalence holds for proximality. On the other hand irreducibility of $(T_i)_{i \in \I}$ does not in general imply irreducibility of $(T_\iii)_{\iii \in \I^n}$.

The principle applied in \cite{BaHoRa19} to deduce \eqref{it:bahora-two} from \eqref{it:bahora-one} is the following. Since the Lyapunov dimension of a self-affine measure $m=\sum_{i \in \I} p_i(T_i)_*m$ cannot approach arbitrarily closely to the affinity dimension of $(T_i)_{i \in \I}$, one instead may look for self-affine measures of the form $m=\sum_{\iii \in \I^n} p_\iii (T_\iii)_*m$ with Lyapunov dimension close to $\dimaff (T_i)_{i \in \I}$, where $n\geq 1$ is large and $(p_\iii)_{\iii \in \I^n}$ is an arbitrary probability vector. Unfortunately this too may be impossible if the new probability vector $(p_\iii)_{\iii \in \I^n}$ is required to be non-degenerate (see \cite[\S3.2]{MoSh19}). This problem can be evaded by allowing $(p_\iii)_{\iii \in \I^n}$ to be degenerate, but now Theorem \ref{th:bahora}\eqref{it:bahora-one} is not directly applicable. This obstacle may in turn be avoided by defining $\J\subset \I^n$ to be the set of all words $\iii \in \I^n$ such that $p_\iii$ is nonzero, and viewing the same self-affine measure $m=\sum_{\iii \in \I^n} p_\iii (T_\iii)_*m=\sum_{\jjj \in \J} p_\jjj (T_\jjj)_*m$ as corresponding to a non-degenerate probability vector $(p_\jjj)_{\jjj \in \J}$ and an \emph{a priori} smaller affine IFS $(T_\jjj)_{\jjj \in \J}$; but this introduces the problem that unlike the IFS $(T_\iii)_{\iii \in \I^n}$, the IFS $(T_\jjj)_{\jjj \in \J}$ might fail to inherit the proximality and strong irreducibility properties of $(T_i)_{i\in \I}$ which are required in order to make Theorem \ref{th:bahora}\eqref{it:bahora-one} function. An entirely new theorem is therefore required in order to ensure that all of these constraints can be met simultaneously. In the planar case the required result is as follows:
\begin{theorem}[Morris-Shmerkin \cite{MoSh19}]\label{th:mosh}
Let $(T_i)_{i \in \I}$ be a proximal and irreducible affine IFS acting on $\R^2$ and satisfying $\dimaff (T_i)_{i \in \I} \in (0,2)$. Then for every $\delta>0$ there exist $n\geq 1$ and $\J \subseteq \I^n$ such that:
\begin{enumerate}[(i)]
\item
The self-affine measure $m=\frac{1}{\#\J}\sum_{\jjj \in \J} (T_\jjj)_*m$ has Lyapunov dimension greater than or equal to $\dimaff (T_i)_{i \in \I}-\delta$.
\item
There exists a cone $\mathcal{K}$ in $\R^2$ which is strictly preserved by all of the linear maps $A_\jjj$ such that $\jjj \in \mathcal{J}$.
\item\label{it:sir}
If $(A_i)_{i \in \I}$ is strongly irreducible then so too is $(A_\jjj)_{\jjj \in \mathcal{J}}$.
\item
If $(T_i)_{i \in \I}$ satisfies the strong open set condition, then $(T_\jjj)_{\jjj \in \mathcal{J}}$ satisfies the strong separation condition.
\end{enumerate}
\end{theorem}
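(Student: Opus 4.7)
The plan is to combine two ingredients: (a) a thermodynamic-formalism argument producing a large subset $\J_1 \subseteq \I^n$ on which the uniform measure has Lyapunov dimension within $\delta/2$ of $\dimaff(T_i)_{i \in \I}$, and (b) a \emph{sandwich} construction in which each word of $\J_1$ is prefixed and suffixed by a high power of a fixed proximal word $\iii_0$, forcing the resulting matrices to strictly preserve a common cone without substantially altering the singular-value data.

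For (a), I would use the fact that $\dimaff(T_i)_{i \in \I}$ is the unique root of the singular-value pressure $s \mapsto \lim_n \tfrac{1}{n}\log\sum_{\iii \in \I^n}\varphi^s(A_\iii)$, so for $s$ close to $\dimaff$ the sum $\sum_{\iii \in \I^n}\varphi^s(A_\iii)$ grows subexponentially. A pigeonhole argument on the pair $(\log\sigma_1(A_\iii),\log\sigma_2(A_\iii))$ across $\iii \in \I^n$ then extracts a subset $\J_1 \subseteq \I^n$ on which these logarithms vary by $o(n)$ and such that $\log \#\J_1 + \log \varphi^s(A_\iii) \approx 0$ for $\iii \in \J_1$. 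This immediately gives a uniform measure on $\J_1$ with Lyapunov dimension at least $\dimaff(T_i)_{i\in\I} - \delta/2$.

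For (b), proximality provides $\iii_0 \in \I^*$ with $A_{\iii_0}$ proximal; let $v^+, v^-$ denote its attracting and repelling projective fixed points in $\mathbb{P}^1(\R)$. Fix a narrow open cone $\mathcal{K} \subset \R^2$ around $v^+$ and a disjoint neighborhood $W$ of $v^-$ in $\mathbb{P}^1(\R)$. For $k$ sufficiently large, $A_{\iii_0}^k$ sends $\mathbb{P}^1(\R) \setminus W$ into an arbitrarily small neighborhood of $v^+$ inside $\mathcal{K}$. Define $\J_2 := \{\iii \in \J_1 \colon A_\iii v^+ \notin W\}$ and $\J := \{\iii_0^k \iii \iii_0^k \colon \iii \in \J_2\}$. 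For each $\jjj \in \J$, the action of $A_\jjj = A_{\iii_0}^k A_\iii A_{\iii_0}^k$ on $\mathcal{K}$ factors in three stages: a contraction into a tiny neighborhood of $v^+$, the application of $A_\iii$ carrying this near $A_\iii v^+ \notin W$, and a final application of $A_{\iii_0}^k$ bringing it strictly into $\mathcal{K}$, yielding (ii). The bounded-factor multiplicativity of $\varphi^s$ under the sandwich, together with smallness of $\J_1 \setminus \J_2$ (via an equidistribution or counting estimate, using irreducibility to ensure that $v^+$ is not fixed by the semigroup), preserve the Lyapunov dimension up to a further $\delta/2$ error, giving (i).

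Property (iii) follows because strong irreducibility of $(A_i)_{i \in \I}$ makes the orbit $\{A_\iii v^+ \colon \iii \in \J_2\}$ populate many projective directions as $n$ grows, ruling out any common finite invariant family of lines for $(A_\jjj)_{\jjj \in \J}$. For (iv), the strong open set condition provides a bounded open $U$ meeting the attractor; combined with the strong transverse contraction of $A_{\iii_0}^k$ near $v^+$ and the strict cone preservation from (ii), one can choose a compact $X \subset \overline{U}$ whose images $T_\jjj X$ lie in pairwise disjoint compact subsets of $X$. The main obstacle in this plan is step (a): selecting $\J_1$ large enough for strong irreducibility and the sandwich to survive, while simultaneously achieving near-optimal Lyapunov dimension, requires careful use of Falconer's subadditive thermodynamic formalism, and it is the balancing of this selection against the perturbations introduced by the sandwich construction (and the loss from $\J_1$ to $\J_2$) that forms the technical heart of the Morris--Shmerkin argument.
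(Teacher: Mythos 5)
Your overall architecture (a thermodynamic selection of a near-optimal subset $\J_1\subseteq\I^n$, followed by sandwiching with powers of a proximal word to force cone preservation) matches the general strategy of \cite{MoSh19} and of the present paper's higher-dimensional generalisation, and parts (i), (ii) and (iv) are plausible modulo routine bookkeeping. But there are two genuine gaps. The more serious one is your treatment of (iii): you assert that strong irreducibility of $(A_\jjj)_{\jjj\in\J}$ follows because the orbit $\{A_\iii v^+\colon \iii\in\J_2\}$ ``populates many projective directions.'' This is not an argument. After the sandwich every $A_\jjj$ is strongly proximal with attracting point near $v^+$ and repelling line near that of $A_{\iii_0}$, and nothing prevents the \emph{selected} sub-collection from accidentally sharing a common eigenline or preserving a pair of lines (each line of a finite invariant union must be an eigenline of a power of every $A_\jjj$, and the selection gives you no control over this). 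Precisely this failure of inheritance is why the theorem is nontrivial: one must \emph{adjoin} to $\J$ extra, carefully chosen words whose matrices witness strong irreducibility (in the present paper's generalisation this is the ``fifth modification,'' which uses Tits' genericity lemmas to adjoin elements avoiding a proper Zariski-closed exceptional set while remaining inside a narrow Schottky family), and then verify that these extra words do not destroy the cardinality, cone, and singular-value estimates. That balancing act is the technical heart of the result and is entirely absent from your proposal.

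The second gap is your claim that $\J_1\setminus\J_2$ is small ``via an equidistribution or counting estimate, using irreducibility.'' Irreducibility gives no such counting estimate: it is perfectly possible for \emph{every} $\iii\in\J_1$ to send $v^+$ into the bad neighbourhood $W$ of the repelling direction, in which case $\J_2=\emptyset$. The standard repair (Abels--Margulis--Soifer, Theorem \ref{thm.AMS} in this paper) is to correct each word by pre-composing with an element of a \emph{fixed finite set} $F$ so that the corrected product is $(r,\varepsilon)$-proximal in good position, and then pigeonhole over $F$; this loses only a bounded factor $1/\#F$ in cardinality and a bounded additive error in the Cartan projections. Without this (or an equivalent device) your passage from $\J_1$ to $\J_2$, and hence conclusion (i), is unsupported. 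A smaller point: for (iv) the word $\iii_0$ must be chosen so that $T_{\iii_0}\overline{U}\subset U$ (as in Lemma \ref{le:pablo}), which is a separate requirement from proximality of $A_{\iii_0}$, and you would need to arrange both simultaneously.
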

Theorem \ref{th:bahora}\eqref{it:bahora-two} holds trivially if the condition $\dimaff (T_i)_{i \in \I}<2$ is not met, since in this case the attractor is easily shown to equal the closure of the open set considered in the strong open set condition (see e.g. \cite[Lemma 5.4]{MoSh19}). Otherwise, since the attractor $Z'$ of $(T_\jjj)_{\jjj \in \J}$ is necessarily a subset of the attractor $Z$ of $(T_i)_{i\in \I}$, by applying Theorem \ref{th:bahora}\eqref{it:bahora-one} to the self-affine measure $m=\frac{1}{\#\J}\sum_{\jjj \in \J} (T_\jjj)_*m$ whose support equals $Z'$, the lower bound $\dimh Z \geq \dimaff (T_i)_{i\in\I}-\delta$ follows directly. 

For the purposes of Theorem \ref{th:mosh} a \emph{cone} is a closed, convex, positively homogenous set $\mathcal{K}\subset \R^2$ with nonempty interior and having the property that $\mathcal{K} \cap -\mathcal{K}=\{0\}$. A linear map $A \colon \R^2 \to \R^2$ is here said to \emph{strictly preserve} a cone $\mathcal{K}$ if $A(\mathcal{K}\setminus\{0\})$ is a subset of the topological interior of $\mathcal{K}$. This hypothesis was included in the work \cite{MoSh19} for compatibility with the earlier works \cite{Ba15,FaKe18} which applied only to planar affine IFS whose linearisations strictly preserve a cone.

In order to state the main result of the present article, a few further definitions are required. We will say that $(A_i)_{i \in \I} \in \GL_d(\R)^\I$ is \emph{completely reducible} if $\R^d$ can be written as a direct sum $\R^d=\bigoplus_{j=1}^k V_j$ where each $V_j$ is a vector subspace of $\R^d$ such that the restriction of $(A_i)_{i \in \I}$ to $V_j$ is irreducible. In particular if $(A_i)_{i \in \I}$ is irreducible then it is completely reducible. The \emph{Zariski closure} of a subsemigroup $\Gamma$ of $\GL_d(\R)$ is by definition the smallest algebraic variety in $\GL_d(\R)$ which contains $\Gamma$, and is always a Lie group with finitely many connected components. If $G$ is the Zariski closure of a subgroup $\Gamma$ of $\GL_d(\R)$ then we write $G_c$ for the unique connected component of $G$ (with respect to the Zariski topology) which contains the identity. Given $(A_i)_{i \in \I} \in \GL_d(\R)^\I$ and $k \in \{1,\ldots,d-1\}$ we will say that $(A_i)_{i \in \I}$ is $k$-irreducible, $k$-strongly irreducible or $k$-proximal if the vector of linear maps $(A_i^{\wedge k})_{i \in \I} \in \GL(\wedge^k \R^d)^\I$ is correspondingly irreducible, strongly irreducible or proximal. Lastly, following \cite{bochi-gourmelon} we say that $(A_i)_{i\in \I}$ is $k$-dominated if the sequence
\[\max_{\iii \in \I^n} \frac{\sigma_{k+1}(A_\iii)}{\sigma_k(A_\iii)}\]
converges to zero exponentially as $n\to \infty$. We call an affine IFS $(T_i)_{i \in \I}$ completely reducible if its vector of linearisations is completely reducible.

The main result of this article is the following:
\begin{theorem}\label{th:main}
Let $(T_i)_{i \in \I}$ be a completely reducible affine IFS acting on $\R^d$ and satisfying $0<\dimaff (T_i)_{i \in \I}<d$, and let $G\leq \GL_d(\R)$ denote the Zariski closure of the semigroup $\{A_\iii \colon \iii \in \I^*\}$. Then for every $\delta>0$ there exist an integer $n \geq 1$ and a nonempty set $\J \subseteq \I^n$ such that the following properties hold:
\begin{enumerate}[(i)]
    \item \label{it:uniform}
    The Lyapunov dimension of the self-affine measure $m=\frac{1}{\#\J}\sum_{\jjj \in \J} (T_\jjj)_*m$ is at least $\dimaff(T_i)_{i \in \I}-\delta$.
    \item\label{it:zar}
    The Zariski closure of the semigroup $\{A_\lll \colon \lll \in \J^*\}$ is precisely $G_c$.
    \item\label{it:gaps}
    For every integer $k \in \{1,\ldots,d-1\}$ such that $(A_i)_{i\in \I}$ is both $k$-proximal and $k$-strongly irreducible, $(A_\jjj)_{\jjj \in \J}$ is both $k$-dominated and $k$-strongly irreducible.
    \item\label{it:strongsep}
    If $(T_i)_{i \in \I}$ satisfies the strong open set condition, then $(T_\jjj)_{\jjj \in \J}$ satisfies the strong separation condition.
\end{enumerate}
\end{theorem}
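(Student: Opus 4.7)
The plan is to combine a thermodynamic argument, which produces a large family of words realising the affinity dimension, with an algebraic ping-pong construction, which thins this family into one with the desired Zariski density and separation properties.

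First I would fix $s$ with $\dimaff(T_i)_{i\in\I}-\delta<s<\dimaff(T_i)_{i\in\I}$ and invoke subadditive thermodynamic formalism for the Falconer singular value function. By the definition of $\dimaff$, the pressure $P(s):=\lim_{n\to\infty}\frac{1}{n}\log\sum_{\iii\in\I^n}\varphi^s(A_\iii)$ is strictly positive, so for every sufficiently large $n$ there exists a subset $\J_0\subseteq\I^n$ of cardinality at least $e^{n(P(s)-\epsilon)}$ on which $\varphi^s(A_\jjj)$ is uniformly comparable to $e^{-\epsilon n}$. A direct computation with the Lyapunov dimension formula then shows that the uniform measure $\frac{1}{\#\J_0}\sum_{\jjj\in\J_0}(T_\jjj)_*m$ has Lyapunov dimension at least $s>\dimaff(T_i)_{i\in\I}-\delta$. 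The exponential slack in $\#\J_0$ is the cardinality budget that will be spent in the next step.

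Next is the algebraic heart of the argument. For each $k\in\{1,\ldots,d-1\}$ for which $(A_i^{\wedge k})_{i\in\I}$ is both proximal and strongly irreducible, standard Benoist--Quint / Goldsheid--Margulis theory supplies proximal words whose $k$-th exterior powers have well-separated attracting fixed points on the $k$-Grassmannian. A quantitative Tits-style ping-pong yields disjoint open balls in each relevant Grassmannian together with words mapping each ball compactly into itself. I would orchestrate this simultaneously over all applicable $k$ and, exploiting the decomposition $\R^d=\bigoplus_j V_j$ supplied by complete reducibility, factor by factor, while additionally arranging that the generated subsemigroup is Zariski dense in the connected component $G_c$ of the Zariski closure $G$. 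The crucial quantitative point is that this simultaneous ping-pong can be carried out inside $\J_0$ while discarding only a sub-exponentially small fraction of its elements, so the resulting set $\J$ still satisfies the Lyapunov-dimension bound of the previous paragraph and yields \eqref{it:uniform} and \eqref{it:zar}.

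Once $\J$ has been constructed, the remaining conclusions follow readily: $k$-domination in \eqref{it:gaps} is a direct consequence of the uniform Grassmannian contraction produced by the Schottky property (cf.\ Bochi--Gourmelon); $k$-strong irreducibility in \eqref{it:gaps} is inherited from $(A_i)_{i\in\I}$ because stabilising a finite union of proper subspaces is a Zariski-closed condition on $G$, which together with Zariski density in $G_c$ and irreducibility of each restriction to $V_j$ rules out such a union for $(A_\jjj)_{\jjj\in\J}$; and \eqref{it:strongsep} follows from a standard argument upgrading the strong open set condition to strong separation in the presence of $1$-domination, after possibly passing to a further power. The principal obstacle is the algebraic construction of Step~2: in the planar case of Theorem \ref{th:mosh} cone preservation implemented a two-dimensional ping-pong by purely convex-geometric means, whereas here, in arbitrary dimension and absent full irreducibility, one must simultaneously control several Grassmannians, respect the direct sum decomposition coming from complete reducibility, and ensure Zariski density in $G_c$, all without exceeding the cardinality budget supplied by the pressure gap.
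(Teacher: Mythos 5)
Your overall strategy---a thermodynamic selection of a large subsystem followed by an algebraic Schottky/ping-pong thinning---is the same as the paper's, but there is a genuine gap in the way you decouple the two steps. Conclusion \eqref{it:uniform} is \emph{not} established by your Step~1. For the uniform Bernoulli measure $\nu$ on $\Sigma_\J$ one must bound $h(\nu)+\Lambda_s((A_\jjj)_{\jjj\in\J};\nu)$ from below, and $\Lambda_s(\nu)=\lim_{\ell}\frac{1}{\ell}(\#\J)^{-\ell}\sum_{\jjj\in\J^\ell}\log\varphi^s(A_\jjj)$ involves arbitrarily long products. Since $\varphi^s$ is only submultiplicative, the $\ell=1$ term dominates the limit, so single-letter control of $\varphi^s(A_\jjj)$ for $\jjj\in\J$ plus a cardinality bound yields only an \emph{upper} bound on $\Lambda_s(\nu)$: a product $A_{\jjj_1}\cdots A_{\jjj_\ell}$ could a priori have $\varphi^s$ exponentially smaller than the product of the individual values. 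The required lower bound $\log\varphi^s(A_\jjj)\geq |\jjj|\,(\Lambda_s(\mu)-\varepsilon)$ for \emph{all} $\jjj\in\J^*$ is precisely what the Schottky structure buys, via near-additivity of Cartan projections along Schottky families (Corollary \ref{corol.additive.cartan}, conclusion (3) of Theorem \ref{thm.key.intrinsic}, item \eqref{it:svs} of Proposition \ref{pr:new-prop}). So the ``cardinality budget'' picture is incomplete: the algebraic construction must also propagate the singular-value estimates to the whole generated semigroup, and the dimension computation can only be performed after it. (Relatedly, the normalisation ``$\varphi^s(A_\jjj)$ comparable to $e^{-\epsilon n}$'' cannot be right; the correct selection, via an ergodic $\varphi^s$-equilibrium state $\mu$ together with Shannon--McMillan--Breiman and Kingman, gives $\#\J_0\geq e^{n(h(\mu)-\varepsilon)}$ with $\varphi^s(A_\jjj)\approx e^{n\Lambda_s(\mu)}$, where $h(\mu)+\Lambda_s(\mu)=P(s)>0$.)

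Two further points. Your route to $k$-domination---simultaneous ping-pong in each $\wedge^k\R^d$ for which $(A_i^{\wedge k})$ is proximal and strongly irreducible---is viable in principle, since for such $k$ the restriction of $\wedge^k$ to $G_c$ is irreducible and proximal and hence loxodromic elements of $G$ are proximal in it; but you must then carry separated attracting/repelling data in every such exterior power through the narrowing, Zariski-density and word-length-equalisation steps at once. The paper avoids this by deriving domination from Lyapunov gaps of the selecting measure, and because the equilibrium state $\mu$ need not satisfy $\lambda_k(\mu)>\lambda_{k+1}(\mu)$ for every relevant $k$, it applies its key proposition \emph{twice}, the second time to the Bernoulli measure on the first subsystem, where Guivarc'h--Raugi guarantees the gaps; your sketch does not notice this issue, and any single-pass argument must address it. Finally, domination is irrelevant to \eqref{it:strongsep}: the strong separation argument needs every word of $\J$ to be suffixed by one fixed word $\iii_0$ with $T_{\iii_0}\overline{U}\subset U$ (Lemma \ref{le:pablo}), and this suffix condition has to be threaded through the entire algebraic construction (in particular the generic elements adjoined to restore Zariski density must themselves end in $\iii_0$), which your proposal does not arrange.
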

As was previously remarked by Rapaport in \cite{Ra22}, the combination of Theorem \ref{th:main} with Theorem \ref{th:rapa} directly implies:
\begin{theorem}\label{th:rapco}
Let $(T_i)_{i \in \I}$ be a proximal and strongly irreducible affine IFS acting on $\R^3$ which satisfies the strong open set condition. Then the Hausdorff dimension of the attractor of $(T_i)_{i\in \I}$ is equal to $\dimaff (T_i)_{i \in \I}$. 
\end{theorem}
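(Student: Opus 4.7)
The plan is to combine Theorem \ref{th:main} with Theorem \ref{th:rapa} in exactly the manner in which the analogous pairing of Theorem \ref{th:mosh} with Theorem \ref{th:bahora}\eqref{it:bahora-one} deduces Theorem \ref{th:bahora}\eqref{it:bahora-two} in the planar case. Falconer's classical upper bound already supplies $\dimh Z \leq \dimaff(T_i)_{i\in\I}$, so only the matching lower bound requires proof. If $\dimaff(T_i)_{i\in\I} = 3$ then the conclusion follows from a routine argument exploiting the strong open set condition, in the spirit of \cite[Lemma 5.4]{MoSh19}; I will therefore assume that $0 < \dimaff(T_i)_{i\in\I} < 3$ and fix an arbitrary $\delta > 0$.

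First I apply Theorem \ref{th:main} to $(T_i)_{i\in\I}$, which is completely reducible since strong irreducibility trivially implies irreducibility. This yields an integer $n \geq 1$ and a nonempty $\J \subseteq \I^n$ such that the Lyapunov dimension of $m = \frac{1}{\#\J}\sum_{\jjj\in\J}(T_\jjj)_*m$ is at least $\dimaff(T_i)_{i\in\I} - \delta$ and such that $(T_\jjj)_{\jjj\in\J}$ satisfies the strong separation condition by Theorem \ref{th:main}\eqref{it:strongsep}. Because $(A_i)_{i\in\I}$ is by hypothesis $1$-proximal and $1$-strongly irreducible, applying Theorem \ref{th:main}\eqref{it:gaps} with $k = 1$ further guarantees that $(A_\jjj)_{\jjj\in\J}$ is $1$-dominated and strongly irreducible.

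It then remains to verify the hypotheses of Theorem \ref{th:rapa} for the sub-IFS $(T_\jjj)_{\jjj\in\J}$ and to apply it. Strong irreducibility is already in hand from the previous step. Proximality follows from $1$-domination: by definition $\sigma_2(A_\lll)/\sigma_1(A_\lll) \to 0$ exponentially as $|\lll| \to \infty$ along $\J^*$, and choosing such an $\lll$ with sufficiently small ratio yields, via a standard Gelfand-type estimate (equivalently, via the Bochi--Gourmelon characterisation of domination by an invariant splitting), an element $A_\lll$ whose top eigenvalue is simple and has strictly greater modulus than all other eigenvalues. Applying Theorem \ref{th:rapa} to $(T_\jjj)_{\jjj\in\J}$ equipped with the uniform, hence non-degenerate, probability vector $p_\jjj := 1/\#\J$ then gives $\dimh m = \dimlyap m \geq \dimaff(T_i)_{i\in\I} - \delta$. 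Since $\supp m$ is contained in the attractor of $(T_\jjj)_{\jjj\in\J}$ and hence in $Z$, this implies $\dimh Z \geq \dimaff(T_i)_{i\in\I} - \delta$, and letting $\delta \to 0$ concludes the argument. The only step here requiring more than bookkeeping is the passage from $1$-domination to proximality, which is standard, so I anticipate no serious obstacle once Theorems \ref{th:main} and \ref{th:rapa} are available.
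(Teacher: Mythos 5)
Your proposal is correct and follows exactly the route the paper intends: the paper derives Theorem \ref{th:rapco} as an immediate combination of Theorem \ref{th:main} with Theorem \ref{th:rapa}, via the mechanism spelled out after Theorem \ref{th:mosh} (apply the measure theorem to the sub-IFS $(T_\jjj)_{\jjj\in\J}$ and use that $\supp m\subseteq Z$). The one detail the paper leaves implicit --- that $1$-domination of $(A_\jjj)_{\jjj\in\J}$ yields the proximality hypothesis of Theorem \ref{th:rapa} --- you identify and resolve correctly by the standard spectral-radius/Gelfand argument (applied to $A_\lll$ and $\wedge^2 A_\lll$ for $\lll\in\J^*$).
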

The proof of Theorem \ref{th:mosh} adapted earlier arguments used by D.-J. Feng and P. Shmerkin in their proof of the continuity of the affinity dimension in \cite{FeSh14} in combination with arguments to show that strong irreducibility of $(A_i)_{i \in \I}$ can be retained by $(A_\jjj)_{\jjj \in \J}$. This part of the proof took advantage of relatively simple and essentially analytic characterisations of strong irreducibility which are available in two dimensions. The much broader scope of Theorem \ref{th:main}\eqref{it:zar} relative to Theorem \ref{th:mosh}\eqref{it:sir} means that the algebraic arguments involved in the proof of Theorem \ref{th:main} are correspondingly more substantial. 

The remainder of this article is structured as follows. In \S\ref{sec.preliminaries} we review those facts from linear algebra, the theory of reductive linear algebraic groups, and the thermodynamic formalism of affine IFS which will be required in Theorem \ref{th:main}. Here we will in particular revisit in greater breadth and detail some of the concepts appealed to already in the introduction. In \S\ref{sec.core} we prove an algebraic result, Theorem \ref{thm.key.intrinsic}, which underpins the proof of Theorem \ref{th:main}. The proof of Theorem \ref{th:main} is presented in \S\ref{sec.pf.main.thm}.

%
%

\section{Preliminaries}\label{sec.preliminaries}

\subsection{Linear algebra}

We begin by describing in more detail the dynamics of projective linear maps, some quantitative notions of proximality, and the notions of proximality index and domination for sets of linear transformations. 

\subsubsection{Dynamics of projective linear maps}\label{subsub.projective}
 
Let $V$ be a finite dimensional Euclidean vector space and $\mathbf{P}(V)$ the corresponding projective space. Where no confusion results, we will use the same symbol $x$ to denote a nonzero vector $x \in V$ and the one-dimensional vector space generated by $x$, which is an element of $\mathbf{P}(V)$. The exterior products $\wedge^k V$ are endowed with the  Euclidean structure induced by that of $V$. We equip $\mathbf{P}(V)$ with the metric $d(x,y):=\sin \angle (x,y)=\frac{\|x \wedge y\|}{\|x\|.\|y\|}$. Given a nonempty set $K\subset \mathbf{P}(V)$ and an element $x \in \mathbf{P}(V)$ we let $d(x,K)$ denote the infimal distance of $x$ to an element of $K$. Given $g \in \End(V)$ we write $\lambda_1(g)$ for the spectral radius of $g$, and for $k=1,\ldots,\dim V$ we write $\sigma_k(g)$ for the $k^{th}$-singular value of $g$. As indicated in the introduction we label singular values in decreasing order as $\sigma_1(g) \geq \ldots \geq \sigma_d(g) \geq 0$. We recall the following definition (see \cite{tits.free,  benoist.linear1, breuillard-gelander.annals, abels.proximal}):
 
\begin{definition}[Proximal map] A linear map $g \in \End(V)$ is said to be proximal if it has a unique eigenvalue of maximal modulus $\lambda_1(g)$. For a proximal map $g$, we denote by $v_{g}^{+}$, the eigenline in $\mathbf{P}(V)$ corresponding to the maximal-modulus eigenvalue of $g$ and $H_{g}^{<}$ the complementary $g$-invariant hyperplane, which we will frequently identify with the corresponding compact subset of $\mathbf{P}(V)$.
\end{definition}

The following is a by-now classical quantification of the previous definition going back to the work of Abels, Margulis and Soifer in \cite{AMS} (though see also \cite{benoist.linear1, breuillard-gelander.annals}). Given $\varepsilon>0$ and a hyperplane $H \subset \mathbf{P}(V)$ we write $B_H^\varepsilon:=\{x \in \mathbf{P}(V) \colon d(x, H) \geq \varepsilon\}$ and for $x \in \mathbf{P}(V)$ we write $b_x^{\varepsilon}:=\{y \in \mathbf{P}(V) \colon d(x,y) < \varepsilon\}$. When $g \in \End(V)$ is a proximal element, we write $b_g^\varepsilon:=b_{x_g^+}^\varepsilon$ and $B_g^\varepsilon:=B_{H_g^<}^\varepsilon$.

\begin{definition}[$(r,\varepsilon)$-proximal map]
Let $0 < \varepsilon \leq r$. A proximal element $ g \in \End(V) $ is said to be $(r, \varepsilon)$-proximal if $d(v_{g}^{+},H_{g}^{<})\geq 2r$ and if $g_{|B_g^\varepsilon}$ is $\varepsilon$-Lipschitz.
\end{definition}

The following observation (due to Benoist in \cite{benoist.linear1}) provides fine control over the linear action of an $(r,\varepsilon)$-proximal transformation on a vector $x$ in terms of its spectral radius and the projective configuration of $x, v_g^+$ and $H_g^<$. We borrow the precise formulation from \cite[Lemma 2.7]{breuillard-sert} and refer to that article for a proof.

\begin{lemma}\label{lemma.radius.vs.norm} For every $0 < \varepsilon \leq r$  there exists a constant $D_{r,\varepsilon}>1$ such that for every $(r,\varepsilon)$-proximal endomorphism $g$ of $V$ and every $x \in V\setminus\{0\}$ with $d(x,H_g^{<})\geq r$,
\begin{equation*}
D_{r,\varepsilon}^{-1} \leq \frac{\|gx\|}{\|x\|} \frac{d(v_g^+,H_g^{<})}{d(x,H_g^{<})} \frac{1}{\lambda_1(g)} \leq D_{r,\varepsilon},
\end{equation*}
and these constants may be chosen so as to satisfy $\underset{\varepsilon \rightarrow 0}{\lim} \, D_{r,\varepsilon} = 1$ for every fixed $r>0$.

\end{lemma}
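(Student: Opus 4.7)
My plan is to exploit the $g$-invariant direct sum decomposition $V = \langle v_g^+\rangle \oplus V_g^<$, where $V_g^<$ denotes the $g$-invariant hyperplane whose projectivisation is $H_g^<$. Choosing a unit representative of $v_g^+$ and writing any $x \in V \setminus \{0\}$ as $x = \alpha v_g^+ + w$ with $w \in V_g^<$, the invariance gives
\[
gx = \alpha\,\lambda_1(g)\, v_g^+ + g_1 w, \qquad g_1 := g|_{V_g^<}.
\]
The proof is then a matter of comparing Euclidean norms of the quantities on the right to the projective distances that appear in the statement.

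First I would express both projective distances in terms of $\alpha$, $w$, and the Euclidean angle $\theta_0\in(0,\pi/2]$ between the line $\langle v_g^+\rangle$ and the hyperplane $V_g^<$. A short computation in an orthonormal frame adapted to $V_g^<$ gives the identities $d(v_g^+,H_g^<) = \sin\theta_0$ and $d(x,H_g^<) = |\alpha|\sin\theta_0/\|x\|$, so that
\[
\frac{d(x,H_g^<)}{d(v_g^+,H_g^<)} = \frac{|\alpha|}{\|x\|}.
\]
Since $\sin\theta_0\leq 1$, the hypothesis $d(x,H_g^<)\geq r$ forces $|\alpha|\geq r\|x\|$, which together with the elementary bound $\|w\|\leq\|x\|+|\alpha|$ controls both $|\alpha|/\|x\|$ and $\|w\|/\|x\|$ uniformly in terms of $r$.

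Next I would translate the $\varepsilon$-Lipschitz condition on the projective map $[g]$ restricted to $B_g^\varepsilon$ into an operator-norm bound $\|g_1\|/\lambda_1(g) \leq c(r,\varepsilon)$ with $c(r,\varepsilon)\to 0$ as $\varepsilon\to 0$ for fixed $r$. With this in hand
\[
\|gx\| = \lambda_1(g)\bigl\|\alpha v_g^+ + \lambda_1(g)^{-1} g_1 w\bigr\| = |\alpha|\,\lambda_1(g)\,\bigl(1+O(c(r,\varepsilon))\bigr),
\]
where the error is uniform in $x$ subject to $d(x,H_g^<)\geq r$. Substituting the identity for $|\alpha|/\|x\|$ obtained above yields the claimed two-sided bound with $D_{r,\varepsilon} = 1 + O(c(r,\varepsilon))\to 1$ as $\varepsilon\to 0$.

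The main obstacle is verifying the projective-to-linear translation used in the previous step. One computes the derivative of $[g]$ at the fixed point $v_g^+\in\mathbf{P}(V)$ in a suitable affine chart: after dividing out by $\lambda_1(g)$, the map $[g]$ reads locally as $g_1/\lambda_1(g)$ up to smooth factors depending only on $\theta_0$, hence only on $r$. The projective Lipschitz estimate $\varepsilon$ then converts into an operator-theoretic bound of the form $\|g_1\|/\lambda_1(g)\leq C(r)\varepsilon$, and careful tracking of the implied constants delivers the required $c(r,\varepsilon)\to 0$ asymptotics that underpin the conclusion.
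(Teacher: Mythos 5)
The paper does not prove this lemma itself; it imports the statement from Breuillard--Sert (Lemma 2.7 there), whose proof --- going back to Benoist --- runs along essentially the lines you propose: decompose $V=\langle v_g^+\rangle\oplus V_g^<$, observe that $d(x,H_g^<)/d(v_g^+,H_g^<)=|\alpha|/\|x\|$, and control $\|g|_{V_g^<}\|/\lambda_1(g)$ by the projective Lipschitz constant. Your distance identities are correct (they follow from $d([x],\mathbf{P}(W))=\|P_{W^\perp}x\|/\|x\|$), and the ``projective-to-linear'' translation you single out as the main obstacle does go through, and without any chart computation: for a unit vector $w\in V_g^<$ apply the $\varepsilon$-Lipschitz bound to $x_t=v_g^++tw$, which lies in $B_g^\varepsilon$ for small $t$ because $d(v_g^+,H_g^<)\ge 2r>\varepsilon$, and let $t\to 0$. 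Using $d([y],[v_g^+])=\|y\wedge v_g^+\|/\|y\|$ together with the lower bound $\|u\wedge v_g^+\|\ge \|u\|\,d(v_g^+,H_g^<)\ge 2r\|u\|$ valid for all $u\in V_g^<$, this yields $\|g_1w\|\le \varepsilon\lambda_1(g)/(2r)$, i.e.\ $c(r,\varepsilon)=\varepsilon/(2r)$. Combined with $r\|x\|\le|\alpha|\le\|x\|/(2r)$ (the upper bound uses $d(v_g^+,H_g^<)\ge 2r$, not just $\sin\theta_0\le 1$), this gives the upper bound and the limit $D_{r,\varepsilon}\to 1$.

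The one place your write-up does not close is the lower bound when $\varepsilon$ is not small relative to $r$: the lemma requires a finite $D_{r,\varepsilon}$ for \emph{every} $0<\varepsilon\le r$, and when the implied constant times $c(r,\varepsilon)$ reaches $1$ the estimate $\|gx\|\ge|\alpha|\lambda_1(g)\bigl(1-O(c(r,\varepsilon))\bigr)$ becomes vacuous. This is repaired in one line by projecting orthogonally onto $(V_g^<)^\perp$: since $g_1w\in V_g^<$, one has $\|gx\|\ge\|P_{(V_g^<)^\perp}(gx)\|=|\alpha|\lambda_1(g)\,d(v_g^+,H_g^<)\ge 2r\,|\alpha|\lambda_1(g)$, an unconditional positive lower bound; take $D_{r,\varepsilon}$ to be the maximum of the two resulting constants. (Also note that $gv_g^+=\pm\lambda_1(g)v_g^+$ --- the top eigenvalue is real but possibly negative --- which is harmless for all the norm estimates.)
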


The following result expresses the principle that the spectral radius is nearly multiplicative on any semigroup of $(r,\varepsilon)$-proximal elements which satisfy a certain additional compatibility condition. It is likewise due to Benoist, this time in \cite[Prop.\ 1.4]{benoist.linear2}; we borrow the precise formulation from \cite[Proposition 2.7]{breuillard-sert} and refer to the latter for a proof.

\begin{proposition}\label{prop.products.proximals}
For every $0<\varepsilon \leq r$ there exists a positive constant $D_{r,\varepsilon}>1$ with the following properties. If $g_{1}, \ldots g_{\ell}$ are $(r,\varepsilon)$-proximal linear transformations of $V$ satisfying $d(v_{g_{\ell}}^{+},H_{g_{1}}^{<}) \geq 6r $ and satisfying $d(v_{g_{j}}^{+},H_{g_{j+1}}^{<}) \geq 6r $ for all $j=1, \ldots \ell-1$, we have that for all integers $n_{1}, \ldots, n_{l} \geq 1$, the linear transformation $g_{\ell}^{n_{\ell}}\ldots g_{1}^{n_{1}}$
is $(2r,2\varepsilon)$-proximal and satisfies
\begin{equation*}
\beta(g_1,\ldots,g_\ell)  D_{r,\varepsilon}^{-\ell} \leq \frac{\lambda_{1}(g_{\ell}^{n_{\ell}}\ldots g_{1}^{n_{1}})}{\lambda_{1}(g_{\ell})^{n_{\ell}} \ldots \lambda_{1}(g_{1})^{n_{1}}} \leq D_{r,\varepsilon}^{\ell} \beta(g_1,\ldots,g_\ell)
\end{equation*}
where 
$$\beta(g_1,\ldots,g_\ell)  := \frac{d(v_{g_\ell}^+,H_{g_1}^{<}) d(v_{g_1}^+,H_{g_2}^{<})\ldots d(v_{g_{\ell-1}}^+,H_{g_\ell}^{<})}{d(v_{g_1}^+,H_{g_1}^{<})\ldots d(v_{g_\ell}^+,H_{g_\ell}^{<})}.$$
Moreover we have $d(v^+_{g_{\ell}^{n_{\ell}}\ldots g_{1}^{n_{1}}},v^+_{g_\ell}) \leq \varepsilon$ and $d_H(H_{g_{\ell}^{n_{\ell}}\ldots g_{1}^{n_{1}}}^<,H_{g_1}^<)\leq \varepsilon$ where $d_H$ denotes the Hausdorff distance on compact subsets of $\mathbf{P}(V)$. Finally, these constants may be chosen so as to satisfy the condition $\lim_{\varepsilon \rightarrow 0}D_{r, \varepsilon} =1$ for every $r>0$.
\end{proposition}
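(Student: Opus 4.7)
The plan is to deduce both the $(2r,2\varepsilon)$-proximality of $T := g_\ell^{n_\ell}\cdots g_1^{n_1}$ and the spectral-radius estimate from a single propagation-of-location argument combined with iterated application of Lemma \ref{lemma.radius.vs.norm}. First I would show by induction on $j \in \{1,\ldots,\ell\}$ that for every $x \in \mathbf{P}(V)$ with $d(x,H_{g_1}^<) \geq 5r$, the image $y_j := g_j^{n_j}\cdots g_1^{n_1} x$ lies in the ball $b_{v_{g_j}^+}^{\varepsilon}$. The base case is immediate from the $(r,\varepsilon)$-proximality of $g_1$ iterated $n_1$ times. For the inductive step, the compatibility hypothesis $d(v_{g_j}^+, H_{g_{j+1}}^<) \geq 6r$ combined with $\varepsilon \leq r$ forces $d(y_j, H_{g_{j+1}}^<) \geq 5r$, so $y_j$ lies in the good region $B_{g_{j+1}}^{\varepsilon}$ on which $g_{j+1}^{n_{j+1}}$ acts as a strict contraction into $b_{v_{g_{j+1}}^+}^{\varepsilon}$.

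Second, applying this with $j = \ell$ together with $d(v_{g_\ell}^+, H_{g_1}^<) \geq 6r$, I obtain that $T$ maps $B_{g_1}^{5r}$ into $b_{v_{g_\ell}^+}^{\varepsilon} \subseteq B_{g_1}^{5r}$ and is Lipschitz on that set with a small constant. Hence $T$ admits a unique attracting fixed projective point $v_T^+ \in b_{v_{g_\ell}^+}^{\varepsilon}$. A dual argument applied to the contragredient action on hyperplanes produces the invariant repelling hyperplane $H_T^<$ within Hausdorff distance $\varepsilon$ of $H_{g_1}^<$. The inequality $d(v_T^+, H_T^<) \geq 6r - 2\varepsilon \geq 4r$, combined with a routine bound on the Lipschitz constant of $T$ on $B_T^{2\varepsilon}$ obtained by composing the pieces, then yields $(2r,2\varepsilon)$-proximality; the closeness statements $d(v_T^+, v_{g_\ell}^+) \leq \varepsilon$ and $d_H(H_T^<, H_{g_1}^<) \leq \varepsilon$ are read off directly.

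Third, for the spectral-radius estimate, let $v$ be a unit vector representing $v_T^+$, so that $\|Tv\| = \lambda_1(T)\|v\|$, and set $y_0 := v$ and $y_j := g_j^{n_j} y_{j-1}$. By Step 1 each $y_{j-1}$ lies within $\varepsilon$ of $v_{g_{j-1}}^+$ (with the cyclic convention $v_{g_0}^+ := v_{g_\ell}^+$), so in particular $d(y_{j-1}, H_{g_j}^<) \geq r$. Lemma \ref{lemma.radius.vs.norm} applied to each factor then yields
\[
\frac{\|g_j^{n_j} y_{j-1}\|}{\|y_{j-1}\|} = \lambda_1(g_j)^{n_j}\cdot \frac{d(y_{j-1}, H_{g_j}^<)}{d(v_{g_j}^+, H_{g_j}^<)}\cdot\eta_j
\]
with $\eta_j \in [D_{r,\varepsilon}^{-1}, D_{r,\varepsilon}]$; replacing $d(y_{j-1}, H_{g_j}^<)$ by the nearby $d(v_{g_{j-1}}^+, H_{g_j}^<)$ costs a further controlled multiplicative factor per $j$. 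Multiplying over $j$, the surviving numerators and denominators collapse to exactly $\beta(g_1,\ldots,g_\ell)\prod_j \lambda_1(g_j)^{n_j}$, with cumulative multiplicative error $D_{r,\varepsilon}^\ell$ after renaming the constant.

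The main obstacle is the bookkeeping of the multiplicative constants: ensuring that all errors — those from Lemma \ref{lemma.radius.vs.norm} applied to each block $g_j^{n_j}$ and those from swapping $d(y_{j-1}, H_{g_j}^<)$ for the nearby $d(v_{g_{j-1}}^+, H_{g_j}^<)$ — combine into a factor $D_{r,\varepsilon}^\ell$ whose base still tends to $1$ as $\varepsilon \to 0$ with $r$ fixed. The separation $6r$ in the compatibility hypothesis (rather than the bare minimum $2r$ needed only to stay inside the attracting ball) provides precisely the angular slack that lets each distance-swap error be absorbed into a single copy of $D_{r,\varepsilon}$ per block; any tighter threshold would cause the error to degrade non-uniformly in $\ell$ and $n_1,\ldots,n_\ell$.
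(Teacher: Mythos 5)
The paper does not prove this proposition itself --- it is quoted from Benoist \cite{benoist.linear2} and Breuillard--Sert \cite{breuillard-sert} --- and your argument is essentially the standard proof given in those references: propagate points along the chain using the $6r$-separation and the $\varepsilon$-Lipschitz contraction of each block $g_j^{n_j}$ (which is itself $(r,\varepsilon)$-proximal with the same attracting point and repelling hyperplane, so Lemma \ref{lemma.radius.vs.norm} applies with a constant uniform in $n_j$) to locate $v^+_{g_\ell^{n_\ell}\cdots g_1^{n_1}}$ near $v_{g_\ell}^+$ and, dually, the repelling hyperplane near $H_{g_1}^<$, then telescope the norm-versus-spectral-radius estimate along the orbit of a top eigenvector to collapse the distance ratios into $\beta(g_1,\ldots,g_\ell)$ with one controlled multiplicative error per block. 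The sketch is sound; the only points left implicit are standard --- that a projective contraction mapping a compact set strictly into itself forces the linear map to be proximal with dominant eigenline at the fixed point, and that the transposed maps form a compatible quantitatively proximal chain so that the dual contraction argument really delivers the stated $\varepsilon$-bound on $d_H(H^<_{g_\ell^{n_\ell}\cdots g_1^{n_1}},H_{g_1}^<)$.
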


We introduce the following terminology borrowed from \cite[Def. 1.7]{benoist.linear2} and motivated by the previous proposition:

\begin{definition}[Schottky family] \label{defnarepsSchottky1}
 A subset $E$ of $\GL(V)$ is called an $(r,\varepsilon)$-Schottky family if both:
 \begin{enumerate}[(i)]
\item
For all $\gamma \in E$, $\gamma$ is $(r,\varepsilon)$-proximal, and\item
$d(v_{\gamma}^{+},H_{\gamma'}^{<}) \geq 6r$, for all $\gamma, \gamma' \in E$.
\end{enumerate}
\end{definition}

A direct consequence of Proposition \ref{prop.products.proximals} in relation to $(r,\varepsilon)$-Schottky families is the following.

\begin{corollary}\label{corol.family.to.semigroup.onerep}
Let $E\subseteq \GL(V)$ be an $(r,\varepsilon)$-Schottky family with $r > 4\varepsilon$. Then the semigroup generated by $E$ is an $(r/2,2\varepsilon)$-Schottky family. \qed
\end{corollary}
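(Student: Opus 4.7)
The plan is to apply Proposition \ref{prop.products.proximals} to an arbitrary product of elements of $E$ and then deduce the Schottky separation condition for the generated semigroup by a triangle-inequality argument.

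First I would take an arbitrary element $\gamma = e_N e_{N-1} \cdots e_1$ of the semigroup generated by $E$, where each $e_i \in E$. Writing this product in the form of Proposition \ref{prop.products.proximals} (with $\ell = N$ and all exponents equal to $1$), the compatibility condition $d(v_{e_j}^+, H_{e_{j+1}}^<) \geq 6r$ is automatic from the definition of an $(r,\varepsilon)$-Schottky family. The proposition therefore yields at once that $\gamma$ is $(2r, 2\varepsilon)$-proximal and that
\[
d(v_\gamma^+, v_{e_N}^+) \leq \varepsilon, \qquad d_H(H_\gamma^<, H_{e_1}^<) \leq \varepsilon.
\]
Since $2r \geq r/2$, every $(2r,2\varepsilon)$-proximal endomorphism is a fortiori $(r/2,2\varepsilon)$-proximal, giving item~(a) of Definition \ref{defnarepsSchottky1} for the generated semigroup.

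Next, to verify the Schottky separation (item~(b) of Definition \ref{defnarepsSchottky1} with parameter $r/2$), I would pick two arbitrary elements $\gamma = e_N \cdots e_1$ and $\gamma' = e_M' \cdots e_1'$ of the semigroup and estimate $d(v_\gamma^+, H_{\gamma'}^<)$ by reducing it to the separation between an attracting line and a repelling hyperplane of generators in $E$. The triangle inequality on $\mathbf{P}(V)$, combined with the two proximity estimates above applied to both $\gamma$ and $\gamma'$, gives
\[
d(v_\gamma^+, H_{\gamma'}^<) \geq d(v_{e_N}^+, H_{e_1'}^<) - d(v_\gamma^+, v_{e_N}^+) - d_H(H_{\gamma'}^<, H_{e_1'}^<) \geq 6r - 2\varepsilon.
\]
Using the hypothesis $r > 4\varepsilon$, the right-hand side exceeds $6r - r/2 = 11r/2 \geq 3r$, which is precisely the required lower bound $6 \cdot (r/2)$ for the Schottky family with parameter $r/2$.

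There is no serious obstacle here: everything follows almost immediately once Proposition \ref{prop.products.proximals} is invoked. The only point requiring a small computation is the triangle-inequality estimate for the attracting/repelling configuration of an arbitrary product, which is why Proposition \ref{prop.products.proximals} was formulated with the explicit closeness estimates $d(v_\gamma^+, v_{e_N}^+) \leq \varepsilon$ and $d_H(H_\gamma^<, H_{e_1}^<) \leq \varepsilon$. The hypothesis $r > 4\varepsilon$ enters only at this final step, to absorb the two $\varepsilon$-errors into the room between $6r$ and $3r$.
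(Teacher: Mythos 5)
Your proof is correct and is exactly the argument the paper intends: the corollary is stated with no written proof as a direct consequence of Proposition \ref{prop.products.proximals}, and your verification (proximality of products from the proposition, plus the triangle-inequality estimate $d(v_\gamma^+,H_{\gamma'}^<)\geq 6r-2\varepsilon>3r$ using $r>4\varepsilon$) supplies precisely the omitted details.
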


\subsubsection{Proximality index}

Let $V$ be a finite dimensional real vector space as before and $\Gamma$ a subsemigroup of $\GL(V)$. Viewing $\Gamma$ in the real algebra $\End(V)$, let $\mathbb{R}\Gamma$ be the real algebra generated by $\Gamma$ in $\End(V)$ and $\overline{\R\Gamma}$ its closure in the usual topology of $\End(V)$. We define the proximality index of $\Gamma$ in $V$ as
$$
r_{\Gamma,V}:=\min\{\rk \pi : \pi \in \overline{\R \Gamma} \setminus \{0\}\}.
$$

By a useful result of Goldsheid--Margulis \cite[Theorem 3.6]{goldsheid-margulis} (see also \cite[Lemma 6.23]{BQ.book}), the proximality index depends on the Zariski closure of $\Gamma$. Here and throughout we use the notation $\overline{X}^Z$ to denote the Zariski closure of a set $X\subset \GL(V)$.
\begin{theorem}[Goldsheid--Margulis]\label{th:gm}
Let $V$ be a finite dimensional real vector space and $\Gamma$ a completely reducible subsemigroup of $\GL(V)$. Let $G<\GL(V)$ be the Zariski closure $\overline{\Gamma}^Z$ of $\Gamma$ in $\GL(V)$. Then, 
$$
r_{\Gamma,V}=r_{G,V}.
$$
\end{theorem}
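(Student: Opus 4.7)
The plan is to deduce the stated identity from the stronger assertion that $\R\Gamma=\R G$ as subsets of $\End(V)$, since then the sets $\overline{\R\Gamma}\setminus\{0\}$ and $\overline{\R G}\setminus\{0\}$ coincide and the minima of the rank function over them must agree. The key observation is that because $\Gamma$ is multiplicatively closed, the real algebra $\R\Gamma$ generated by $\Gamma$ in $\End(V)$ coincides with its real linear span (together, if one insists on unitality, with $I$); in particular $\R\Gamma$ is a linear subspace of the finite-dimensional vector space $\End(V)$, and as such is Zariski closed, being the vanishing set of finitely many linear functionals. The same remark applies to $\R G$.

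Next I would argue that $G\subseteq \R\Gamma$. Since $\R\Gamma$ is Zariski closed in $\End(V)$ and $\GL(V)$ is Zariski open in $\End(V)$, the intersection $\R\Gamma\cap\GL(V)$ is Zariski closed in $\GL(V)$ with respect to the subspace topology. This intersection contains $\Gamma$, so by the minimality of the Zariski closure it must contain $G=\overline{\Gamma}^Z$. Taking linear spans yields $\R G\subseteq \R\Gamma$, and the reverse inclusion $\R\Gamma\subseteq \R G$ is immediate from $\Gamma\subseteq G$. Hence $\R\Gamma=\R G$.

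Because finite-dimensional linear subspaces are automatically closed in the usual topology of $\End(V)$, we obtain $\overline{\R\Gamma}=\R\Gamma=\R G=\overline{\R G}$, from which $r_{\Gamma,V}=r_{G,V}$ follows directly from the definition. I do not anticipate any substantive obstacle to this argument, and the only subtlety worth flagging is that the complete reducibility hypothesis is not logically required for the bare identity $\overline{\R\Gamma}=\overline{\R G}$ in itself. Its real role in the broader Goldsheid--Margulis framework is to underpin further structural descriptions of the proximality index (for example in terms of highest-weight data attached to the irreducible $G$-constituents of $V$), which become available only when $V$ is a semisimple module over the Zariski closure; it is perhaps for compatibility with those finer statements that the hypothesis is recorded here.
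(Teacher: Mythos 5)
The paper offers no proof of this statement: it is imported verbatim from Goldsheid--Margulis (Theorem 3.6 of their paper; see also Benoist--Quint, Lemma 6.23), so there is no internal argument to compare yours against. Read against the definition exactly as printed in \S 2.1.2 --- with $\R\Gamma$ the real \emph{algebra} generated by $\Gamma$ --- your argument is correct and complete: a semigroup is multiplicatively closed, so $\R\Gamma$ is the linear span of $\Gamma$, hence a Zariski-closed and norm-closed subset of $\End(V)$; its intersection with $\GL(V)$ is Zariski closed and contains $\Gamma$, hence contains $G$; therefore $\R\Gamma=\R G=\overline{\R G}=\overline{\R\Gamma}$ and the two minima of the rank function coincide. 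Your parenthetical observation that complete reducibility is never used is also accurate under this reading.

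That observation is, however, the warning sign you should have pursued: it indicates that the definition as printed has almost certainly trivialised the theorem it is meant to record. In Goldsheid--Margulis and in Benoist--Quint, $\R\Gamma$ denotes the set of scalar multiples $\{\lambda\gamma : \lambda\in\R,\ \gamma\in\Gamma\}$ --- the cone over $\Gamma$, not the subalgebra it generates. That set is not a linear subspace and not Zariski closed, and its closure in the usual topology (which is why the closure is taken at all) is genuinely smaller than the closed span: for a Zariski-dense Schottky subgroup of $\SL_2(\R)$ the span is all of $\End(\R^2)$ by Burnside, whereas the nonzero rank-one elements of the closed cone are exactly those whose image lies in the limit set of $\Gamma$ in $\mathbf{P}(\R^2)$. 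Under that definition your pivotal step --- $G\subseteq\R\Gamma$ by minimality of the Zariski closure --- is unavailable, only the inequality $r_{\Gamma,V}\ge r_{G,V}$ is formal, and the reverse inequality is the actual content of the theorem; it is there that Zariski density and complete reducibility do real work. So: a valid proof of the statement as literally written, but one that would not survive the definition the cited sources (and, in all likelihood, the authors) intend.
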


It is easy to see that if $H_0$ is a finite-index subgroup of a group $H<\GL(V)$, then $r_{H_0,V}=r_{H,V}$ (e.g.~ by using \cite[Lemma A.2]{bochi-sambarino-potrie}).

\subsubsection{Dominated sets of linear transformations}

Here we recall the notion of domination for a bounded set of linear transformations as studied by \cite{yoccoz, avila-bochi-yoccoz, bochi-gourmelon}. We also record a characterisation of dominated sets via cones due to \cite{bochi-gourmelon}. For a set $S \subseteq \GL_d(\R)$ and $n \in \N$, we denote by $S^n$ the set of $n$-fold products of elements of $S$, namely $S^n:=\{g_1 \ldots g_n \colon g_i \in S\}$.

\begin{definition}[Dominated family]
For $k=1,\ldots,d-1$, a relatively compact subset $S \subset \GL_d(\R)$ is said to be $k$-dominated if there is $\varepsilon>0$ such that for every large enough $n \in \N$ and every $g \in S^n$,
$$\frac{\sigma_{k+1}(g)}{\sigma_k(g)} \leq (1-\varepsilon)^n.$$
\end{definition}
Clearly, a set $S \subseteq \GL(\R^d)$ is $k$-dominated if and only if $\wedge^k(S) \in \GL(\wedge^k(\R^d))$ is $1$-dominated.

For $k=1,\ldots,d$, we denote by $\mathbf{Gr}_k(\R^d)$ the Grassmannian of $k$-dimensional subspaces of $\R^d$. An important result of Bochi and Gourmelon \cite{bochi-gourmelon} (see also  \cite{bochi-sambarino-potrie}) provides an alternative characterisation of $k$-dominated sets via a cone-type condition. This is expressed in the following.
\begin{theorem}\cite[Theorem B]{bochi-gourmelon}
A relatively compact set $S$ of $\GL_d(\R)$ is $k$-dominated if and only if there exist a $(d-k)$-dimensional subspace $W\subset \R^d$ and a non-empty subset $C$ of $\mathbf{Gr}_k(\R^d)$ such that $\overline{\bigcup_{g \in S} gC}$ is a compact subset of the interior of $C$ and such that all elements of $C$ are transverse to $W$.
\end{theorem}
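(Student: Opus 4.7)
The plan is to prove the two implications separately. For the implication that the cone condition implies $k$-domination, I would exploit the transversality of $C$ to $W$ to work in an affine chart on $\mathbf{Gr}_k(\R^d)$ in which $W$ plays the role of the subspace at infinity, so that $\overline{C}$ is a compact subset of an affine space. The strict inclusion $S\cdot\overline{C}\subset \Int(C)$, together with compactness of $\overline{C}$ and relative compactness of $S$, yields by a standard argument a uniform contraction constant $\lambda<1$ (possibly after passing to a fixed power of $S$) with respect to an adapted metric on $C$ --- for instance a Hilbert-type pseudo-metric built from the chart. I would then invoke the standard SVD-based estimate which relates the Lipschitz contraction rate of $g$ on the set of $k$-planes uniformly transverse to $V^-(g)$ to the ratio $\sigma_{k+1}(g)/\sigma_k(g)$, converting uniform Grassmannian contraction into the desired exponential bound.

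For the converse, suppose $S$ is $k$-dominated. For each $g\in S^n$ with $n$ sufficiently large, the gap $\sigma_{k+1}(g)/\sigma_k(g)$ is exponentially small, so the SVD of $g$ unambiguously defines attracting and repelling subspaces $V^+(g)\in\mathbf{Gr}_k(\R^d)$ and $V^-(g)\in\mathbf{Gr}_{d-k}(\R^d)$, namely the spans of the top $k$ left and bottom $d-k$ right singular vectors respectively. The crux is a uniform quasi-continuity statement: for $h\in S$ and $g\in S^n$ with $n$ large, $h\cdot V^+(g)$ lies within distance $O((1-\varepsilon)^n)$ of $V^+(hg)$. Setting $\Lambda^+:=\overline{\bigcup_{n\geq N}\{V^+(g): g\in S^n\}}$ for $N$ sufficiently large then produces a compact, $S$-invariant subset of $\mathbf{Gr}_k(\R^d)$; a small open neighborhood $C$ of $\Lambda^+$ satisfies $S\cdot\overline{C}\subset C$ strictly, thanks to the exponential contraction. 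For $W$ it suffices to take $V^-(g_0)$ for any fixed $g_0\in S^N$: $k$-domination forces every element of $\Lambda^+$, and hence every element of a sufficiently small $C$, to be transverse to this $W$.

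The main obstacle, which lies at the heart of Bochi and Gourmelon's argument, is establishing the uniform quasi-continuity of $V^+$ together with the quantitative contraction estimate on Grassmannians. Precisely, one needs to show that $g$ acts on a neighborhood of $V^+(g)$ in $\mathbf{Gr}_k(\R^d)$ as a contraction with rate controlled by $\sigma_{k+1}(g)/\sigma_k(g)$ and by the angular distance to $V^-(g)$, and that the constants are uniform over $g\in\bigcup_n S^n$. A convenient reduction is to the case $k=1$ via exterior powers: the set $S$ is $k$-dominated precisely when $\{A^{\wedge k}: A\in S\}$ is $1$-dominated, and in the $k=1$ case the required statement reduces to projective dynamics and can be obtained by a careful SVD analysis in the spirit of Lemma~\ref{lemma.radius.vs.norm}. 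Once this quantitative contraction is in hand, the remaining bookkeeping --- upgrading almost-invariance to invariance of $\Lambda^+$, and then to strict invariance of a small open neighborhood --- is largely formal.
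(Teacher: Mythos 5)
You should first be aware that the paper does not prove this statement at all: it is quoted verbatim from Bochi--Gourmelon \cite{bochi-gourmelon} and used as a black box, so there is no internal argument to compare yours against. Your sketch does reproduce the correct architecture of their proof (reduction to $k=1$ via exterior powers, the SVD-defined spaces $V^{\pm}(g)$, quasi-continuity of $V^{+}$, and the construction of the invariant set as a neighbourhood of the attracting spaces), but it has a genuine gap at exactly the step you dismiss as ``largely formal''. In the converse direction, a small metric neighbourhood $C=B_\varepsilon(\Lambda^{+})$ will in general \emph{not} satisfy $h(\overline{C})\subset \Int (C)$ for each individual $h\in S$: the exponential contraction you invoke is only available for elements of $S^{n}$ with $n$ large, while a single $h\in S$ may expand distances on $\mathbf{Gr}_k(\R^d)$ by a factor $L>1$ (uniform over $S$ by relative compactness, but still greater than $1$), so $h(\overline{B_\varepsilon(\Lambda^+)})$ is only guaranteed to lie in the $L\varepsilon$-neighbourhood of $h\Lambda^+\subseteq\Lambda^+$, which need not be inside $B_\varepsilon(\Lambda^+)$. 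Repairing this requires a real idea --- an adapted (Lyapunov-type) metric near $\Lambda^{+}$ in which a single step already contracts, or an explicit construction of $C$ as a finite union of forward images $\bigcup_{n\leq N}\bigcup_{g\in S^{n}}g\bigl(B_\varepsilon(\Lambda^{+})\bigr)$ with strictness verified by hand --- and this is precisely where the content of the invariant cone-field criterion lives.

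A second, smaller issue is in the forward direction: a Hilbert-metric contraction argument presupposes convexity of (the components of) $\overline{C}$ in the affine chart, which is not part of the hypothesis; $C$ may have many non-convex components, and the fractional-linear action of $g$ on the chart does not preserve convex hulls, so one cannot simply convexify. The standard ways around this are an adapted quadratic-forms version of the cone criterion, or Bochi--Gourmelon's own route through the existence of a dominated splitting for the cocycle over $S^{\mathbb{Z}}$ combined with their Theorem A. Finally, your choice $W=V^{-}(g_0)$ and the claim that all of $\Lambda^{+}$ is transverse to it are correct but deserve justification: for $g\in S^{n}$ and large $m$ the product $g_0^{m}g$ again lies in $S^{mN+n}$ and must exhibit the required singular value gap, which fails quantitatively if $V^{+}(g)$ meets $V^{-}(g_0^{m})$ at a small angle; this uses crucially that the sets $S^{n}$ consist of \emph{all} $n$-fold products, so that arbitrary concatenations of pasts and futures are admissible.
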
 
Here, strictly invariant means that the closure of $C$ is mapped to the interior of $C$ by every element of $S$.


\subsection{Linear reductive groups} \label{subsec.reductive} 
Here we include a brief overview of some aspects of reductive linear algebraic groups that we will use in the sequel. Our interest  in this class of groups arises from the fact that they occur as the Zariski closures of subsemigroups of $\GL_d(\mathbb{R})$ which act completely reducibly on $\mathbb{R}^d$. For a more detailed exposition of the theory of reductive linear algebraic groups, we refer the reader to \cite{BQ.book,borel-tits,knapp,mostow.book}.

\subsubsection{Definitions and complete reducibility}

A linear real algebraic subgroup $G$ of $\GL_d(\R)$ is said to be real reductive if it has no non-trivial normal subgroup consisting of unipotent matrices. It is well-known (\cite[Ch.4]{chevalley}) that the action on $\mathbb{R}^d$ of a real reductive group $G$ is completely reducible. As is also well-known, conversely, if $\Gamma$ is a subsemigroup of $\GL_d(\mathbb{R})$ that acts completely reducibly on $\mathbb{R}^d$ then the Zariski closure of $\Gamma$  is a real reductive group\footnote{Indeed, if it is not real reductive, then it contains a non-trivial normal subgroup $N$ consisting of unipotent matrices. Let $V_1$ be a $H$-irreducible subspace of $\R^d$ on which $N$ acts non-trivially. By a classical result of Kolchin, the subspace $V_0$ of fixed vectors of $N$ in $V_1$ is a non-trivial proper subspace of $V_1$. Since $N$ is normal in $H$, $V_0$ is invariant under $G$, contradicting irreducibility of the $H$-action on $V_1$.}.

Given a real reductive group $G$, let $\mathbf{G}$ denote the underlying affine real algebraic group so that $G$ can be identified with the group $\mathbf{G}(\R)$ of real points of $\mathbf{G}$. We will denote by $G_c$ the Zariski connected component $\mathbf{G}^\circ(\R)$ of $G$ which contains the identity element. 
Let $\mathbf{A}$ be a maximal $\R$-split torus of $\mathbf{G}$, $\Sigma$ a root system for the pair $(\mathbf{G}, \mathbf{A})$, $\Sigma^+$ a choice of positive roots, and $A$ the group of real points of $\mathbf{A}$. 

Let $\mathfrak{g}$ and $\mathfrak{a}$ be, respectively, the Lie algebras of $G$ and $A$. The Lie algebra $\mathfrak{a}$ is the direct sum \begin{equation}\label{torus}\mathfrak{a}=\mathfrak{a}_Z\oplus \mathfrak{a}_S\end{equation} of the Lie subalgebra $\mathfrak{a}_Z$ of $Z(G_c)\cap A$, where $Z(G_c)$ is the center of $G_c$, and the Lie subalgebra $\mathfrak{a}_S$ of $A \cap [G,G]$, where $[G,G]$ is the (closed) commutator subgroup of $G$, which is a semisimple Lie subgroup. 

For a character $\hat{\alpha}: A \to \R^\ast_+$, we will denote by $\alpha$ its differential which is an element of the dual $\mathfrak{a}^\ast$. The set $\Sigma$ constitutes a root system in $\mathfrak{a}^\ast$ and we denote 
$$
\mathfrak{a}^+:=\{x \in \mathfrak{a} \colon \alpha(x) \geq 0 \; \text{for every} \; \alpha \in \Sigma^+ \}
$$
the Weyl chamber corresponding to the choice of positive roots of $\mathfrak{a}$. 
We fix a scalar product $\langle \cdot, \cdot \rangle$ on $\mathfrak{a}$ whose restriction to $\mathfrak{a}_S$ is given by the Killing form, and which is defined arbitrarily on $\mathfrak{a}_Z$, and for which the direct sum in \eqref{torus} is orthogonal. We denote by $\mathfrak{a}^{++}$ the interior of the Weyl chamber in the Euclidean space $\mathfrak{a}$. Let $\Pi=\{\alpha_1,\ldots,\alpha_{d_S}\}$ be the simple roots in $\Sigma^+$, where $d_S$ is the semisimple rank of $G$, i.e.~ $d_S=\dim \mathfrak{a}_S$. 

To every simple root $\alpha \in \Pi$, one can associate a non-zero dominant weight $\omega_\alpha \in \mathfrak{a}^\ast$ such that  $\omega_\alpha(\mathfrak{a}_Z)=0$ and $2\frac{\langle\omega_{\alpha},\beta\rangle}{\langle \beta,\beta \rangle}=\delta_{\alpha,\beta}$ if $\beta \in \Pi$. The weights $\omega_\alpha$ are called fundamental weights of $G$. Every non-negative integer linear combination of the fundamental weights $\omega_{\alpha}$ is the highest weight of some irreducible representation of $G_c$.



\subsubsection{Cartan projection} Let $K_c$ be a maximal compact subgroup of $G_c$ and $K$ be the compact maximal subgroup of $G$ containing $K_c$. We have the Cartan decomposition $G=K A^+K_c$, where $A^+=\exp(\mathfrak{a}^+)$ (see \cite[\S 8.2.2]{BQ.book}). For $g \in G$, the middle factor in its $KA^+K_c$ factorisation is uniquely defined which allows us to define the Cartan projection 
$$\kappa: G \to \mathfrak{a}^+$$
which sends $g$ to the unique $x \in \mathfrak{a}^+$ such that $g \in K\exp(x)K_c$.

\subsubsection{Jordan projection} Every element $g$ of $G$ admits a Jordan decomposition as a commuting product $g=g_eg_hg_u$ where $g_e$ is elliptic, $g_h$ is semisimple and conjugated to an element of $\exp(\mathfrak{a}^+)$, and $g_u$ is unipotent. The element of $\exp(\mathfrak{a}^+)$ to which $g_h$ is conjugated is uniquely defined allowing us define the \emph{Jordan projection}:
$$\lambda: G \to \mathfrak{a}^+$$
which sends $g$ to the unique element $\lambda(g) \in \mathfrak{a}^+$ such that $g_h$ is conjugate to $\exp(\lambda(g))$.

\subsubsection{Representations}\label{subsub.rep}
For this paragraph we refer the reader to \cite[\S 6]{AMS}, \cite[\S 12]{borel-tits}, \cite{benoist.linear1}, and \cite[Ch. 8]{BQ.book}. Let $(V,\rho)$ be a finite dimensional linear representation of $G_c$. The weights of $(V,\rho)$ are the characters $\hat{\chi}: A \to \R_+^*$ such that the associated weight space $V_{\chi}=\{v\in V \colon \forall a \in A, \rho(a)v=\hat{\chi}(a)v\}$ is non-trivial. If $(V,\rho)$ is irreducible, then its set of weights admits a maximal element $\hat{\chi}_\rho$ (for the partial order given by $\chi_1 \leq \chi_2$  if any only if $\chi_2- \chi_1$ is a non-negative linear combination of positive roots) which is called the highest weight of $(V,\rho)$.  A representation $(V,\rho)$ is said to be \emph{proximal} if $\dim(V_{\hat{\chi}_{\rho}})=1$.

The following lemma singles out a collection of proximal representations of $G_c$. Thanks to these, the study of properties of elements of $G_c$ (such as Cartan/Jordan projections) boils down to the study of (simultaneous) linear algebraic properties of those representations.
\begin{lemma}(Tits \cite{tits.replin})\label{lemma.tits.distinguished}
For each $i=1,\ldots,d_S$, there 
exists a proximal irreducible real representation $(V_{i},\rho_{i})$ of $G_c$ with highest weight $\hat{\chi}_{i}$ such that $\chi_i$ is a multiple of the fundamental weight $\omega_{i}$. \end{lemma}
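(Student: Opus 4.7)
The plan is to follow Tits's classical approach via highest-weight theory over $\C$ combined with a Galois descent step to $\R$. First I would pass to the complexified reductive algebraic group $G_{c,\C}$, which contains $G_c$ as a Zariski-dense subgroup, together with its complex split torus $A_\C$. By the Theorem of the Highest Weight, for every dominant character $\hat{\lambda}$ of $A_\C$ lying in its character lattice there is a unique absolutely irreducible complex algebraic representation $(W_\lambda,\sigma_\lambda)$ of $G_{c,\C}$ of highest weight $\hat{\lambda}$, and the highest weight space is automatically one-dimensional. Although the fundamental weight $\omega_i$ need not itself lift to a character of $A_\C$, it lies in the rational span of the character lattice, so there exists $m_i \geq 1$ such that $m_i\omega_i$ does lift; restricting the resulting $\sigma_{m_i\omega_i}$ to $G_c$ preserves absolute irreducibility and the one-dimensional highest weight space by Zariski density of $G_c$ in $G_{c,\C}$.

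The second step is to descend this complex representation of $G_c$ to a real one while preserving the one-dimensionality of the highest weight space. Since $A$ is $\R$-split, all its characters are $\R$-rational and hence Galois invariant, which forces $\sigma_{m_i\omega_i} \cong \overline{\sigma_{m_i\omega_i}}$; consequently $\sigma_{m_i\omega_i}$ is of real, complex, or quaternionic Schur type. In the real-type case the representation carries a $G_c$-equivariant real structure, and descending along it yields an absolutely irreducible real representation on a space $V_i$ whose complexification recovers $W_{m_i\omega_i}$. The real weight decomposition then agrees with the complex one, so the highest weight space has real dimension one and $\rho_i$ is proximal; setting $\hat{\chi}_i := \widehat{m_i\omega_i}$ finishes this case.

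The main obstacle is the complex or quaternionic case, in which naive restriction of scalars produces a real irreducible representation whose complexified highest weight space has dimension two, failing proximality. The remedy, which is the technical heart of Tits's construction in \cite{tits.replin}, is to replace $\sigma_{m_i\omega_i}$ by the Cartan (highest-weight) irreducible summand of a sufficiently high tensor power $\sigma_{m_i\omega_i}^{\otimes N}$. Since the obstruction to real-type lies in the Brauer group of $\R$, which is $\mathbb{Z}/2\mathbb{Z}$, some finite $N$ annihilates it; the resulting Cartan summand has highest weight $Nm_i\omega_i$, one-dimensional highest weight space, and is of real type. Descending as in the previous paragraph then produces $(V_i,\rho_i)$ with the required properties, proximality again following from the retention of a one-dimensional highest weight space under real descent.
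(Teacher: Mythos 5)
The paper does not actually prove this lemma; it is quoted directly from Tits \cite{tits.replin}, so your proposal can only be judged on its own terms. Your argument is essentially the standard one and works when $G$ is split over $\R$, but it has a genuine gap in general, and the general case is exactly where Tits's theorem has content. The torus $A$ is only a maximal $\R$-\emph{split} torus, so $A_\C$ need not be a maximal torus of $G_{c,\C}$, and $\Sigma$ with its fundamental weights $\omega_i$ is the \emph{restricted} root system. The Theorem of the Highest Weight classifies irreducible representations of $G_{c,\C}$ by dominant characters of a maximal torus $T_\C \supseteq A_\C$, not of $A_\C$: a dominant character of $A_\C$ neither determines a unique irreducible representation nor does it automatically have a one-dimensional weight space. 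Indeed, if $\tilde\chi$ is the absolute highest weight and $\Delta_0$ denotes the set of absolute simple roots vanishing on $A$, the restricted-highest-weight space is $\bigoplus V_{\tilde\chi-\sum_{\beta\in\Delta_0}n_\beta\beta}$, which can have dimension greater than one. So your assertion that ``the highest weight space is automatically one-dimensional'' is unjustified, and proximality --- the whole point of the lemma --- is not established.

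The descent step has a related error. Galois conjugation acts on the \emph{absolute} highest weight through the $*$-action on the Dynkin diagram; invariance of the restriction $\chi|_A$ does not force $\sigma\cong\overline{\sigma}$. For $G=\mathrm{SU}(2,1)$ the standard representation has restricted highest weight equal to the fundamental restricted weight with one-dimensional complex weight space, yet it is of complex type ($\overline{V}\cong V^*\not\cong V$); its realification has a two-dimensional highest restricted weight space and is not proximal, and no tensor power of $V$ alone repairs this because the $*$-action moves the absolute highest weight. The missing idea in Tits's construction is to choose the absolute highest weight $\tilde\chi$ to be $*$-invariant and orthogonal to $\Delta_0$ (that is, $\langle\tilde\chi,\beta^\vee\rangle=0$ for all $\beta\in\Delta_0$) while restricting on $A$ to a multiple of $\omega_i$: orthogonality to $\Delta_0$ is what forces the restricted-highest-weight space to be one-dimensional, and $*$-invariance --- combined with your tensor-square trick to kill the quaternionic obstruction, which is fine --- is what permits descent to $\R$.
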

We note that for $i=1,\ldots,d_S$, (differentials of) all the other weights of $(V_{i},\rho_{i})$ consist of $(\chi_{i} -\alpha_{i})$ and others of the form $\chi_{i} -\alpha_{i}- \sum_{\beta \in \Pi}n_{\beta}\beta$ where $n_{\beta} \in \mathbb{N}$. In particular, for all $g \in G$ and $i=1,\ldots,d_S$, $\rho_{i}(g)$ is a proximal linear transformation of $V_{i}$ if and only if $\alpha_{i}(\lambda(g))>0$. Let $d=\dim \mathfrak{a}$.  We complete the representations given by the previous lemma by adding $d-d_S$ non-trivial representations $(V_i,\rho_i)$ of dimension one with weights $\chi_i$ for $i=d_S+1,\ldots, d$ so that $\{\chi_{i} : i=1,\ldots, d\}$ forms a basis of $\mathfrak{a}^\ast$. Then the mapping $a \to (\chi_{1}(a), \ldots,\chi_{d}(a))$ is an isomorphism of real vector spaces $\mathfrak{a} \to \mathbb{R}^{d}$. We will refer to the collection of representations $(V_i,\rho_i)$ for $i=1,\ldots,d$ as \textit{distinguished representations}.

For $i=1,\ldots,d$, we will consider the norms $\|.\|_{i}$ on $V_{i}$'s given by the next lemma originating from the work of Mostow \cite{mostow.self.adjoint}. We formulate a statement adapted to our purposes thanks to \cite{BQ.book}.

\begin{lemma}[Mostow norms]\label{lemma.mostow.norms}
Let $(V,\rho)$ be an irreducible real representation of $G_c$. Let $\hat{\chi}_{\rho,i}$ for $i=1\ldots,s=\dim V$ be the weights of $A$ in $V$. Then, there exists a Euclidean norm $\|.\|$ on $V$ such that for any $g \in G_c$, the logarithms of the singular values of $\rho(g)$ are given by $\chi_{\rho,i}(\kappa(g))$ for $i=1\ldots,s$. The same holds for moduli of eigenvalues of $\rho(g)$ replacing the Cartan projection $\kappa$ with the Jordan projection $\lambda$.
\end{lemma}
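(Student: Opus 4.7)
The plan is to invoke Mostow's classical construction of an inner product adapted to the Cartan decomposition of a real reductive group, and then to read off both claims from the Cartan and Jordan decompositions respectively. The genuinely nontrivial input is the existence of the norm; once it is in hand, the rest is multiplicative polar decomposition in one case and multiplicative Jordan decomposition in the other.

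First I would fix a Cartan involution $\theta$ of $\mathfrak{g}=\Lie(G_c)$ whose fixed-point set is $\mathfrak{k}_c=\Lie(K_c)$, giving the Cartan decomposition $\mathfrak{g}=\mathfrak{k}_c\oplus\mathfrak{p}$ with the split Cartan subspace $\mathfrak{a}\subset\mathfrak{p}$. Mostow's theorem on self-adjoint representations \cite{mostow.self.adjoint} (see also the treatment in \cite[Ch.~8]{BQ.book}) then supplies a Euclidean inner product $\langle\cdot,\cdot\rangle$ on $V$ such that $\rho(K_c)$ acts by orthogonal transformations and $d\rho(X)$ is symmetric for every $X\in\mathfrak{p}$. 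Two immediate consequences do the bulk of the work: $\rho(\exp x)=\exp(d\rho(x))$ is symmetric positive definite for every $x\in\mathfrak{a}$, and simultaneous diagonalisation of the commuting symmetric operators $\{d\rho(x):x\in\mathfrak{a}\}$ forces the weight-space decomposition $V=\bigoplus_{i=1}^{s}V_{\hat{\chi}_{\rho,i}}$ to be orthogonal, with $\rho(\exp x)$ acting on $V_{\hat{\chi}_{\rho,i}}$ as the scalar $e^{\chi_{\rho,i}(x)}$.

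For the singular-value statement I would then invoke the Cartan decomposition within $G_c$: every $g\in G_c$ may be written as $g=k_1\exp(\kappa(g))k_2$ with $k_1,k_2\in K_c$ and $\kappa(g)\in\mathfrak{a}^+$. The factorisation $\rho(g)=\rho(k_1)\,\rho(\exp\kappa(g))\,\rho(k_2)$ has orthogonal outer factors and a symmetric positive definite middle factor whose eigenvalues are exactly $e^{\chi_{\rho,i}(\kappa(g))}$, so it constitutes a singular-value decomposition of $\rho(g)$; hence the singular values of $\rho(g)$ are precisely $e^{\chi_{\rho,i}(\kappa(g))}$, each counted with multiplicity $\dim V_{\hat{\chi}_{\rho,i}}$, and taking logarithms yields the claim. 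For the eigenvalue-modulus statement I would pass instead to the Jordan decomposition $g=g_eg_hg_u$, a commuting product in which $g_e$ lies in a compact subgroup and $g_u$ is unipotent; the representation preserves these features, so $\rho(g_e)$ has spectrum on the unit circle and $\rho(g_u)$ has all eigenvalues equal to $1$, whence the moduli of the eigenvalues of $\rho(g)$ coincide with those of $\rho(g_h)$. Since $g_h$ is conjugate inside $G_c$ to $\exp(\lambda(g))$ and conjugation preserves the spectrum, those moduli are exactly $e^{\chi_{\rho,i}(\lambda(g))}$, as required.

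The real obstacle is the norm itself: producing a single Euclidean structure on $V$ that is simultaneously $K_c$-invariant and makes $d\rho(\mathfrak{p})$ act by self-adjoint operators is the heart of Mostow's paper \cite{mostow.self.adjoint}, and in the present note it is invoked rather than reproved. Once the norm is at our disposal, the two remaining assertions amount to reading off the eigenvalues of the symmetric positive definite factor in the polar decomposition (for singular values), and to observing that elliptic and unipotent factors contribute only moduli equal to $1$ (for eigenvalue moduli), so no further work beyond a careful invocation of the Cartan and Jordan decompositions is needed.
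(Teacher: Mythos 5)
Your proposal is correct and follows essentially the same route as the paper, which simply cites the existence of good (Mostow) norms from \cite[\S 8.4.1]{BQ.book} together with \cite[Lemma 8.8]{BQ.book} — precisely the two ingredients you spell out: the $K_c$-orthogonal, $\mathfrak{p}$-self-adjoint inner product, followed by reading off singular values from the $K_c\exp(\mathfrak{a}^+)K_c$ factorisation and eigenvalue moduli from the Jordan decomposition. No discrepancy to report.
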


\begin{proof}
This follows from the existence of good norms \cite[\S 8.4.1]{BQ.book} together with \cite[Lemma 8.8]{BQ.book}.
\end{proof}

We borrow the following statement from Benoist--Quint \cite[Corollary 8.20]{BQ.book}.

\begin{corollary}[Uniform continuity of Cartan projection] \label{Cartan.stability}
Let $G$ be a real reductive group and $\kappa: G \to \mathfrak{a}^{+}$ be a Cartan projection of $G$. For every pair of nonempty compact subsets $L_1$ of $G$ and $L_2$ of $G_c$, there exists a compact subset $M$ of $\mathfrak{a}$ such that for every $g \in G$ we have $\kappa(L_1gL_2) \subseteq \kappa(g)+M$. 
\end{corollary}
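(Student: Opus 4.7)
The plan is to convert $\kappa$ into log-operator-norms via the distinguished representations of Lemma \ref{lemma.tits.distinguished} equipped with the Mostow norms of Lemma \ref{lemma.mostow.norms}, so that the corollary becomes a statement of sub-multiplicativity of operator norms. The sole subtlety is that these representations are defined only on $G_c$, whereas $L_1$ may meet components of $G$ outside $G_c$; this will be handled by an auxiliary Cartan decomposition that absorbs the ``bad'' part of $h_1$ into an outer $K$-factor which $\kappa$ ignores.

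Let $(V_i,\rho_i)_{1\le i\le d}$ be the distinguished representations with highest weights $\chi_i$, and $\|\cdot\|_i$ the Mostow Euclidean norms on $V_i$. Since $A=\exp(\mathfrak{a})$ is connected we have $A\subseteq G_c$, so for any $g\in G$ with Cartan decomposition $g=k_1 a k_2$ ($k_1\in K$, $a=\exp\kappa(g)\in A^+$, $k_2\in K_c$), Lemma \ref{lemma.mostow.norms} gives
\[
\chi_i(\kappa(g))=\chi_i(\log a)=\log\|\rho_i(a)\|_i.
\]
As $\{\chi_1,\ldots,\chi_d\}$ is a basis of $\mathfrak{a}^*$, producing the required compact $M$ reduces to producing, for each $i$, a constant $C_i$ depending only on $L_1,L_2$ such that $|\chi_i(\kappa(h_1 g h_2))-\chi_i(\kappa(g))|\le C_i$ uniformly over $g\in G$, $h_1\in L_1$, $h_2\in L_2$.

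To that end, rewrite $h_1 g h_2=(h_1 k_1)\cdot a\cdot (k_2 h_2)$. The element $h_1 k_1$ lies in the compact set $L_1 K\subseteq G$ and $k_2 h_2$ lies in the compact set $K_c L_2\subseteq G_c$. Cartan-decompose each as $h_1 k_1=k_1' a_1 k_2'$ with $k_1'\in K$, $k_2'\in K_c$, and $k_2 h_2=k_1'' a_2 k_2''$ with $k_1'',k_2''\in K_c$ (both outer factors may be taken in $K_c$ because $k_2 h_2\in G_c$). By continuity of the Cartan decomposition, $a_1,a_2\in A^+$ range over compact subsets as $(h_1,k_1,k_2,h_2)$ varies. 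Absorbing the outer $K$ and $K_c$ factors, which $\kappa$ ignores, leaves
\[
\kappa(h_1 g h_2)=\kappa(a_1 k_2' a k_1'' a_2),
\]
a product entirely inside $G_c$. Sub-multiplicativity of the Mostow norm combined with the above identification gives
\[
\chi_i(\kappa(h_1 g h_2))\le \chi_i(\kappa(h_1 k_1))+\log\|\rho_i(k_2')\|_i+\chi_i(\kappa(g))+\log\|\rho_i(k_1'')\|_i+\chi_i(\kappa(k_2 h_2)),
\]
and every term on the right other than $\chi_i(\kappa(g))$ is uniformly bounded by continuity of $\kappa$ together with compactness of $L_1 K$, $K_c L_2$, and $K_c$. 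The reverse inequality follows by writing $a=(a_1 k_2')^{-1}\cdot(a_1 k_2' a k_1'' a_2)\cdot(k_1'' a_2)^{-1}$, a factorization entirely in $G_c$ with outer factors lying in compact subsets, and applying sub-multiplicativity once more.

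The place where the argument most naturally breaks is precisely the passage from $h_1\in G$ to objects on which the distinguished representations $\rho_i$ (defined only on $G_c$) can be applied. The Cartan-decomposition trick threads this needle: the component information of $h_1$ is pushed into the outer $K$-factor $k_1'$ that $\kappa$ discards, leaving an $A^+$-piece $a_1$ that, though non-canonical as a function of $(h_1,k_1)$, still ranges over a compact subset of $A\subseteq G_c$, enabling the representation-theoretic computation above.
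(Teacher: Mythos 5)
Your proof is correct. The paper itself offers no proof of this statement, simply importing it from Benoist--Quint \cite[Corollary 8.20]{BQ.book}; your argument is essentially the standard one behind that reference, namely converting $\kappa$ into the logarithms of operator norms $\log\|\rho_i(\cdot)\|_i$ for the distinguished representations in their Mostow norms (so that $\log\|\rho_i(x)\|_i=\chi_i(\kappa(x))$ on $G_c$, the highest weight dominating the other weights on $\mathfrak{a}^+$) and then invoking submultiplicativity in both directions, together with the fact that $\{\chi_i\}$ is a basis of $\mathfrak{a}^*$. The only genuine wrinkle in the disconnected setting --- that $\rho_i$ is defined only on $G_c$ while $h_1$ and the outer Cartan factor $k_1$ of $g$ may lie outside it --- is handled correctly: you re-decompose $h_1k_1=k_1'a_1k_2'$ and discard $k_1'\in K$ using the left $K$-invariance of $\kappa$, leaving a product entirely in $G_c$ with $a_1=\exp(\kappa(h_1k_1))$ confined to a compact subset of $A^+$ by continuity and properness of $\kappa$ on the compact set $L_1K$.
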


\subsubsection{Loxodromic elements in $G$}

Let $G$ be a real reductive group and $\mathfrak{a}_S^{++}$ be the relative interior of the the intersection of the Weyl chamber $\mathfrak{a}^{+}$ with $\mathfrak{a}_S$.

\begin{definition} An element $g \in $G is said to be loxodromic if the orthogonal projection in $\mathfrak{a}$ of $\lambda(g)$ on $\mathfrak{a}_S$ belongs to $\mathfrak{a}_S^{++}.$ 
\end{definition}
By \S \ref{subsub.rep}, for $g \in G$ to be loxodromic is equivalent to asking that $\rho_i(g)$ is a proximal transformation in $\GL(V_{i})$ for each $i=1,\ldots,d_S$.
In the notation of \cite{BQ.book}, this notion coincides with the notion of $\Pi$-proximal element. 
As was done for linear maps in \S \ref{subsub.projective}, we quantify this notion as follows.  

\begin{definition} Let $r \geq \varepsilon>0$. An element $g \in $G is said to be $(r,\varepsilon)$-proximal in $G$ (or $(r,\varepsilon)$-loxodromic) if $\rho_i(g)$ is $(r,\varepsilon)$-proximal for each $i=1,\ldots,d_S$. 
\end{definition}

As observed by Benoist in \cite{benoist.linear1} (we borrow the formulations from \cite{breuillard-sert}), using the representation-theoretic bridge exposited in \S \ref{subsub.rep}, one gets the multidimensional counterparts to the above Lemma \ref{lemma.radius.vs.norm} and Proposition \ref{prop.products.proximals}.

\begin{lemma}\label{loxodromy.implies.Cartan.close.to.Jordan}
Let $G$ be a real reductive group and let $r>0$. Then there exists a constant  $C_{r}>0$ such that if $g\in G_c$ is $(r,\varepsilon)$-proximal in $G_c$ for some $\varepsilon \in (0,r]$, then $\|\lambda(g)-\kappa(g)\|\le C_r$. 
\end{lemma}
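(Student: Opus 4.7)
The plan is to reduce the lemma to a purely linear estimate — that the ratio of the operator norm to the spectral radius of an $(r,\varepsilon)$-proximal transformation is bounded in terms of $r$ alone — and then to establish this estimate using Benoist's control lemma (Lemma \ref{lemma.radius.vs.norm}).

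First, using the distinguished representations $(V_i,\rho_i)$, $i=1,\ldots,d$, supplied by Lemma \ref{lemma.tits.distinguished}, whose highest weights $\chi_1,\ldots,\chi_d$ form a basis of $\mathfrak{a}^\ast$, it suffices to bound each $|\chi_i(\lambda(g)-\kappa(g))|$ by a constant depending only on $r$. By the Mostow norm lemma (Lemma \ref{lemma.mostow.norms}), once each $V_i$ is equipped with its Mostow norm, $\chi_i(\kappa(g))=\log \sigma_1(\rho_i(g))$ and $\chi_i(\lambda(g))=\log \lambda_1(\rho_i(g))$. For the one-dimensional representations ($i>d_S$) these two quantities coincide trivially, while for $i\leq d_S$ the hypothesis on $g$ says precisely that $\rho_i(g)$ is an $(r,\varepsilon)$-proximal element of $\GL(V_i)$. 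The problem therefore reduces to the linear statement: there exists $C_r>0$ such that for every Euclidean space $W$ and every $(r,\varepsilon)$-proximal $h\in\GL(W)$ with $\varepsilon\in(0,r)$, one has $|\log \sigma_1(h)-\log \lambda_1(h)|\leq C_r$.

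For the linear statement, the inequality $\sigma_1(h)\geq \lambda_1(h)$ is immediate from $\|h v_h^+\|=\lambda_1(h)\|v_h^+\|$. For the upper bound, I would decompose $W=\R v_h^+ \oplus H_h^<$ and use the standard estimate that the oblique projection $P$ onto $\R v_h^+$ along $H_h^<$ (as well as $I-P$) has operator norm at most $1/d(v_h^+,H_h^<)\leq 1/(2r)$. Writing a unit vector as $x=a v_h^+ + y$ with $y\in H_h^<$, one obtains $|a|,\|y\|\leq 1/(2r)$ and $\|hx\|\leq |a|\lambda_1(h)+\|h|_{H_h^<}(y)\|$, so the matter reduces to showing $\|h|_{H_h^<}\|\leq C_r\lambda_1(h)$. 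Since Lemma \ref{lemma.radius.vs.norm} cannot be applied to vectors of $H_h^<$ directly, I would apply it instead to the lift $y':=y+2v_h^+$ of a unit $y\in H_h^<$; a short computation shows $d(y',H_h^<)/d(v_h^+,H_h^<)=2/\|y'\|$ with $\|y'\|\leq 3$, so $d(y',H_h^<)\geq (4/3)r\geq r$ and the lemma yields $\|hy'\|\leq 2D_{r,\varepsilon}\lambda_1(h)$. Since $hy$ and $hy'$ differ by $\pm 2\lambda_1(h)v_h^+$, the triangle inequality gives $\|hy\|\leq 2(D_{r,\varepsilon}+1)\lambda_1(h)$, as required.

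The main technical point is the uniformity of the constant in $\varepsilon\in(0,r)$: a priori $D_{r,\varepsilon}$ is only known to approach $1$ as $\varepsilon\to 0$, not to be uniformly bounded over $(0,r)$. The resolution is the observation that any $(r,\varepsilon)$-proximal map is automatically $(r,r)$-proximal whenever $\varepsilon\leq r$, since the inclusion $B_h^r\subseteq B_h^\varepsilon$ ensures that the $\varepsilon$-Lipschitz condition on $B_h^\varepsilon$ restricts to an $r$-Lipschitz condition on $B_h^r$; applying Lemma \ref{lemma.radius.vs.norm} with parameters $(r,r)$ in place of $(r,\varepsilon)$ then expresses all bounds in terms of the single $\varepsilon$-independent constant $D_{r,r}$, giving the final $C_r$.
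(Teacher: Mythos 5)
Your proof is correct and follows exactly the route the paper intends (the paper itself gives no proof of this lemma, citing Benoist and Breuillard--Sert): reduce via the distinguished representations and the Mostow norms to the purely linear statement that $\log\bigl(\sigma_1(h)/\lambda_1(h)\bigr)$ is bounded in terms of $r$ for an $(r,\varepsilon)$-proximal $h$, and deduce that from Lemma \ref{lemma.radius.vs.norm}; in particular your observation that an $(r,\varepsilon)$-proximal map with $\varepsilon\le r$ is automatically $(r,r)$-proximal is precisely what is needed to make the constant independent of $\varepsilon$. The only slip is cosmetic: for a unit vector $x=av_h^++y$ the oblique projection bound gives $\|y\|=\|(I-P)x\|\le 1+1/(2r)$ rather than $1/(2r)$, which changes nothing in the final estimate.
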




\begin{proposition}\label{Best}Let $G$ be a real reductive group. For every $r > 0 $, there exists a constant $C_r>0$ such that for every $\ell \in \N$, if $g_{1},\ldots, g_{\ell}$ are elements of $G_c$ that are $(r,\varepsilon)$-proximal in $G_c$ and have the property that $d(v_{\rho_{i}(g_{j})}^{+}, H_{\rho_{i}(g_{j+1})}^{<}) \geq 6r$ for all $j=0,\ldots, \ell-1$ (where we write $g_0:=g_\ell$), and for all $i=1,\ldots,d_S$, then for all $n_{1}, \ldots, n_{\ell} \in \N$, the product $g_{\ell}^{n_{\ell}}\ldots g_{1}^{n_{1}}$ is $(2r,2\varepsilon)$-proximal in $G_c$ and satisfies
\begin{equation*}
\left\|\lambda(g_{\ell}^{n_{\ell}}\ldots g_{1}^{n_{1}}) - \sum_{i=1}^{\ell} n_{i}\lambda(g_{i})\right\| \leq C_r \ell.
\end{equation*}
\end{proposition}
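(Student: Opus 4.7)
\medskip

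\noindent\textbf{Proof plan.} The strategy is to transport the one-dimensional statement of Proposition \ref{prop.products.proximals} across each of the distinguished representations $(V_i,\rho_i)$ of $G_c$ introduced in \S\ref{subsub.rep}, using the Mostow norms of Lemma \ref{lemma.mostow.norms} to convert multiplicative estimates on spectral radii into additive estimates on $\lambda$.

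First I would fix $r>0$ and unpack the hypothesis. By the definition of $(r,\varepsilon)$-proximality in $G_c$, for each $i=1,\ldots,d_S$ the elements $\rho_i(g_1),\ldots,\rho_i(g_\ell)$ are $(r,\varepsilon)$-proximal in $\GL(V_i)$, and the assumed configuration condition $d(v^+_{\rho_i(g_j)},H^<_{\rho_i(g_{j+1})})\ge 6r$ (with indices taken cyclically) is exactly the hypothesis of Proposition \ref{prop.products.proximals} applied to the $\ell$-tuple $(\rho_i(g_1),\ldots,\rho_i(g_\ell))$ in $V_i$. Applying that proposition yields, for every $n_1,\ldots,n_\ell\in\N$, that $\rho_i(g_\ell^{n_\ell}\cdots g_1^{n_1})=\rho_i(g_\ell)^{n_\ell}\cdots\rho_i(g_1)^{n_1}$ is $(2r,2\varepsilon)$-proximal in $\GL(V_i)$, and satisfies a two-sided bound of the form
\[
\bigl|\log\lambda_1\bigl(\rho_i(g_\ell^{n_\ell}\cdots g_1^{n_1})\bigr)-\textstyle\sum_{j=1}^\ell n_j\log\lambda_1(\rho_i(g_j))\bigr|\le \ell\log D_{r,\varepsilon}+|\log\beta_i|.
\]
Here $\beta_i$ is Benoist's ratio of distances attached to $(\rho_i(g_1),\ldots,\rho_i(g_\ell))$; since every numerator is at least $6r$ and every denominator at least $2r$ while all distances lie in $[0,1]$, one has $|\log\beta_i|\le c_r\ell$ for a constant $c_r$ depending only on $r$. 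The factor $\log D_{r,\varepsilon}$ is bounded uniformly in $\varepsilon\in(0,r]$ since $D_{r,\varepsilon}\to 1$ as $\varepsilon\to 0$ and $D_{r,\varepsilon}$ is continuous. The $(2r,2\varepsilon)$-proximality of $\rho_i(g_\ell^{n_\ell}\cdots g_1^{n_1})$ for every $i=1,\ldots,d_S$ gives, by definition, the asserted $(2r,2\varepsilon)$-proximality of $g_\ell^{n_\ell}\cdots g_1^{n_1}$ in $G_c$.

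Next I would translate the preceding scalar inequality into the claimed estimate in $\mathfrak{a}$. By the Mostow norm statement of Lemma \ref{lemma.mostow.norms}, for each irreducible distinguished representation and for every $g\in G_c$ we have $\log\lambda_1(\rho_i(g))=\chi_i(\lambda(g))$, where $\chi_i$ is the differential of the highest weight of $\rho_i$. Applied to the displayed inequality this gives, for $i=1,\ldots,d_S$,
\[
\bigl|\chi_i\bigl(\lambda(g_\ell^{n_\ell}\cdots g_1^{n_1})-\textstyle\sum_{j=1}^\ell n_j\lambda(g_j)\bigr)\bigr|\le C'_r\ell,
\]
for a constant $C'_r$ depending only on $r$. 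For the remaining indices $i=d_S+1,\ldots,d$, the distinguished representations are one-dimensional, so $\rho_i$ is a character and the analogous identity holds \emph{exactly} (with zero right-hand side): indeed $\rho_i(gh)=\rho_i(g)\rho_i(h)$ together with Lemma \ref{lemma.mostow.norms} forces $\chi_i(\lambda(g_\ell^{n_\ell}\cdots g_1^{n_1}))=\sum_j n_j\chi_i(\lambda(g_j))$ exactly.

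Finally, by the choice of distinguished representations in \S\ref{subsub.rep}, the collection $\{\chi_1,\ldots,\chi_d\}$ is a basis of $\mathfrak{a}^\ast$. Hence the $d$ scalar inequalities above combine to give
\[
\bigl\|\lambda(g_\ell^{n_\ell}\cdots g_1^{n_1})-\textstyle\sum_{j=1}^\ell n_j\lambda(g_j)\bigr\|\le C_r\ell
\]
with a constant $C_r$ depending only on $r$ (and on the fixed basis of Mostow norms and distinguished representations attached to $G$). This completes the claim.

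The truly delicate point is ensuring that the constant obtained in Proposition \ref{prop.products.proximals} can be absorbed into a single $C_r$ independent of $\varepsilon$ and of $\ell$. Its $\ell$-dependence is benign because Proposition \ref{prop.products.proximals} produces a bound of the form $D_{r,\varepsilon}^{\ell}\beta$ and the factor $\beta$ is controlled by $(\text{const})^\ell$; everything becomes additive after taking logarithms. The $\varepsilon$-dependence is controlled by the limiting behaviour $D_{r,\varepsilon}\to 1$ as $\varepsilon\to 0$, which lets us treat $\log D_{r,\varepsilon}$ as uniformly bounded on $(0,r]$. The representation-theoretic bookkeeping — namely identifying the semisimple part (where the real estimate takes place) and the central part (where the identity is exact) and checking that the highest weights give a basis — is the main conceptual step but is standard once the setup of \S\ref{subsub.rep} is in place.
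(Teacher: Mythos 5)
Your argument is correct and is exactly the route the paper itself indicates (but does not write out): it attributes Proposition \ref{Best} to Benoist/Breuillard--Sert and says it follows from Proposition \ref{prop.products.proximals} via the representation-theoretic bridge of \S\ref{subsub.rep}, which is precisely what you carry out — applying the one-dimensional estimate in each distinguished representation, bounding $|\log\beta_i|$ by $c_r\ell$ using the $6r$ and $2r$ lower bounds, converting to $\chi_i(\lambda(\cdot))$ via the Mostow norms, handling the central characters exactly, and recovering the norm bound from the fact that $\{\chi_1,\ldots,\chi_d\}$ is a basis of $\mathfrak{a}^*$. No gaps; the only point requiring care, the uniformity of $D_{r,\varepsilon}$ over $\varepsilon\in(0,r]$, is one you flag and handle in the standard way.
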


Analogously to Definition \ref{defnarepsSchottky1}, we have:

\begin{definition}[Schottky family in $G$]
Let $G$ be a real reductive group and let $r \geq \varepsilon >0$ be given. Then:
\begin{enumerate}[(i)]
\item
A subset $S$ of $G_c$ is said to be an $(r,\varepsilon)$-Schottky family in $G_c$, if for each $i=1,\ldots,d_S$ the set $\rho_{i}(S)$ is an $(r,\varepsilon)$-Schottky family.\\
\item
For $\eta \leq r$, a collection $S$ of $(r,\varepsilon)$-loxodromic elements is said to be \emph{$\eta$-narrow} if for every fixed $i=1,\ldots,d_S$ the attracting points $x_{i,g}^+ \in \mathbf{P}(V_i)$ are all within distance $\eta$ of one another as $g$ varies throughout $S$,  and if for every fixed $i=1,\ldots,d_S$ the repelling hyperplanes $H_{i,g}^< \subset \mathbf{P}(V_i)$ are within Hausdorff distance $\eta$ of one another as $g$ varies throughout $ S$.
\end{enumerate}
\end{definition}
When for each $i=1,\ldots,d_S$ a point $x_i \in  \mathbf{P}(V_i)$ and a hyperplane $H_i \subset \mathbf{P}(V_i)$ are understood, sometimes we will say that an $(r,\varepsilon)$-Schottky family is \emph{$\eta$-narrow around $((x_i),(H_i))$} if all attracting points $x_{i,g}^+$ are within $\eta/2$ distance of the corresponding point $x_i$ and if the analogous condition also holds for the repelling hyperplanes $H_{i,g}^<$ and their corresponding hyperplanes $H_i$. 

The first statement of the following corollary is the reductive group version of Corollary \ref{corol.family.to.semigroup.onerep}. The first clause follows easily from the latter result and, the statement concerning the narrowness of the semigroup, from Proposition \ref{prop.products.proximals}.  The second statement follows directly from the definitions.

\begin{corollary}\label{corol.family.to.semigroup}
Let $G$ be a real reductive group. Then the following properties hold:
\begin{enumerate}[(i)]
\item
Let $r>4\varepsilon>0$. Then the semigroup $\Gamma_S$ generated by an $(r,\varepsilon)$-Schottky family $S$ in $G$ is an $(r/2,2\varepsilon)$-Schottky family. If, furthermore, $S$ is $\eta$-narrow for some $\eta \geq 0$, then $\Gamma_S$ is $(\eta +2\epsilon)$-narrow.
\item
Let $E\subset G$ be an $\eta$-narrow collection of  $(r,\varepsilon)$-loxodromic elements, where $\varepsilon, \eta>0$ and $r>4\max\{ \varepsilon,\eta\}$. Then $E$ is an $(r/4,\varepsilon)$-Schottky family. \qed
\end{enumerate}
\end{corollary}

A direct corollary of Lemma \ref{loxodromy.implies.Cartan.close.to.Jordan} and Proposition \ref{Best} is the following additivity property of Cartan projections.

\begin{corollary}\label{corol.additive.cartan}
 Let $G$ be a real reductive group and $r >0$. Then, there exists a constant $C_r>0$ such that for every $\varepsilon \in (0,r]$ and any $(r,\varepsilon)$-Schottky family $S\subseteq G$, for all $\ell \in \N$, all integers $n_1,\ldots, n_\ell \geq 1$ and all $g_1,\ldots,g_\ell \in S$, we have
$$
\left\|\kappa(g_\ell^{n_\ell} \ldots g_1^{n_1})-\sum_{i=1}^\ell n_i \kappa(g_i)\right\| \leq C_r \ell. 
$$
\qed
\end{corollary}


%
%

\subsection{Ergodic theory and thermodynamic formalism.}

We end this section by briefly reviewing some relevant material from ergodic theory and subadditive thermodynamic formalism.

\subsubsection{Shift spaces and Lyapunov exponents.}

Given a finite nonempty set $\I$ we define $\Sigma_\I:=\I^\N$ and equip this set with the infinite product topology, with respect to which it is compact and metrisable. For every $x=(x_r)_{r=1}^\infty$ and $n \geq 1$ we let $x|_n$ denote the word $x_1x_2\cdots x_n \in \I^n$. Conversely, if a word $\iii \in \I^n$ is specified then $[\iii]\subset \Sigma_\I$ denotes the set of all $x \in \Sigma_\I$ such that $x|_n=\iii$. Given $x=(x_r)_{r=1}^\infty$ we define an element $\sigma x\in \Sigma_\I$ by $\sigma x:=(x_{r+1})_{r=1}^\infty$. This defines a continuous surjective transformation $\sigma \colon \Sigma_\I \to \Sigma_\I$. We let $\mathcal{M}_\sigma(\Sigma_\I)$ denote the set of all $\sigma$-invariant Borel probability measures on $\Sigma_\I$. If $(p_i)_{i \in \I}$ is any probability vector then the measure $\nu=(\sum_{i \in \I} \delta_i)^{\N}$ belongs to $\Sigma_\I$ and will be called the Bernoulli measure corresponding to the probability vector $(p_i)_{i \in \I}$. If $(p_i)_{i \in \I}$ is constant then its associated Bernoulli measure will be called the \emph{uniform Bernoulli measure} on $\Sigma_\I$. For each $\mu \in \mathcal{M}_\sigma(\Sigma_\I)$ we write $h(\mu)$ for the entropy of $\mu$ with respect to $\sigma$.

If $V$ is a $d$-dimensional real vector space equipped with an inner product then for every $(A_i)_{i \in \I} \in \GL(V)^\I$,  every $\mu \in \mathcal{M}_\sigma(\Sigma_\I)$ and every $k=1,\ldots,d$ we define the $k^{th}$ Lyapunov exponent of $(A_i)_{i \in \I}$ with respect to $\mu$ to be the quantity 
\begin{equation}\label{eq:def-lyap}\lambda_k\left((A_i)_{i \in \I}; \mu\right):=\lim_{n \to \infty} \frac{1}{n} \int_{\Sigma_\I} \log \sigma_k\left(A_{x|_n}\right) d\mu(x).\end{equation}
The existence of the above limit is guaranteed as follows. For every $k=1,\ldots,d$ and $B_1, B_2 \in \GL(V)$ the inequality $\prod_{\ell=1}^k \sigma_\ell(B_1B_2)\leq \prod_{\ell=1}^k \sigma_\ell(B_1) \cdot  \prod_{\ell=1}^k \sigma_\ell(B_2)$ follows from the identity $ \prod_{\ell=1}^k \sigma_\ell(B)\equiv \|\wedge^k B\|$, where $\|\wedge^kB\|$ denotes the norm of $B$ with respect to the inner product structure on $\wedge^k V$ induced by the inner product on $V$. This inequality guarantees the existence of the limit 
\begin{equation}\label{eq:l-limit}\lim_{n \to \infty} \frac{1}{n} \sum_{\ell=1}^k \int_{\Sigma_\I} \log \sigma_\ell\left(A_{x|_n}\right) d\mu(x)\end{equation}
 for every $k$ via subadditivity, and the existence of the limit \eqref{eq:def-lyap} follows by considering the difference between two instances of \eqref{eq:l-limit}. These observations also imply, via the subadditive ergodic theorem, that when the measure $\mu$ is ergodic with respect to $\sigma$ we have 
 \[\lim_{n \to \infty} \frac{1}{n}\log \sigma_k\left(A_{x|_n}\right)=\lambda_k((A_i)_{i \in \I} ; \mu)\]
 for $\mu$-a.e. $x \in \Sigma_\I$. When $(A_i)_{i \in \I}$ is understood we will often denote $\lambda_k((A_i)_{i \in \I}; \mu)$ more simply by $\lambda_k(\mu)$.
  
 The following theorem of Guivarc'h and Raugi (see \cite[Corollary 9]{GuRa}), which extends the earlier work of Furstenberg,  will be of particular use when studying the Lyapunov exponents of Bernoulli measures:
 \begin{theorem}\label{th:gu-ra}
 Let $V$ be a finite-dimensional real vector space, let $(A_i)_{i \in \I} \in \GL(V)^\I$ be proximal and strongly irreducible and let $\nu$ be a fully-supported Bernoulli measure on $\Sigma_\I$. Then $\lambda_1((A_i)_{i \in \I}; \nu)> \lambda_2((A_i)_{i \in \I}; \nu)$.
 \end{theorem}

In our considerations in Section \ref{sec.core}, we will also be considering random walks on the reductive groups themselves. There is a corresponding Lyapunov vector in the Weyl chamber (which in fact controls the Lyapunov exponents in all representations). More precisely, given a real reductive group $G$, a finite set $\mathcal{I}$ and an ordered sequence of elements $(g_i)_{i \in \mathcal{I}}$ in $G$, for any ergodic $\mu \in \mathcal{M}_{\sigma}(\Sigma_{\mathcal{I}})$, for $\mu$-a.e. $x \in \Sigma_\mathcal{I}$, the sequence of normalized Cartan projections $\frac{1}{n}\kappa(A_{x|_n})$ converges to a fixed element of $\mathfrak{a}^+$ which we denote by $\vec{\lambda}
(\mu)$ and call the \textit{Lyapunov vector} of $\mu$ in $\mathfrak{a}^+$. Thanks to Lemma \ref{lemma.tits.distinguished} (and the subsequent discussion), the convergence follows again from subadditive ergodic theorem applied in distinguished representations.

\subsubsection{Pressure, affinity dimension and Lyapunov dimension.}

We now review and extend the notions of affinity dimension and Lyapunov dimension which were defined in the introduction and connect them more directly with ergodic theory.

Let $(T_i)_{i \in \I}$ be an affine IFS acting on $\R^d$ which is contracting with respect to some fixed norm on $\R^d$, and write each $T_i$ in the form $T_ix=A_ix+v_i$ where $A_i \in \GL_d(\R)$ and $v_i \in \R^d$. As in the introduction, for every $s \geq 0$ and $B \in \GL_d(\R)$ we define 
\[\varphi^s(B):=\left\{\begin{array}{cl}\sigma_1(B) \cdots \sigma_{\lfloor s\rfloor}(B) \sigma_{\lceil s\rceil}(B)^{s-\lfloor s\rfloor}&\text{if }0 \leq s \leq d,\\ |\det B|^{\frac{s}{d}}&\text{if }d \geq s,\end{array}\right.\] 
The inequality $\varphi^s(B_1B_2) \leq \varphi^s(B_1)\varphi^s(B_2)$ is well-established and may be found in \cite{Fa88}. For every $\mu \in \Sigma_\I$ and $s \geq 0$ we define
\[\Lambda_s((A_i)_{i\in \I}; \mu):=\lim_{n \to \infty}\frac{1}{n}\int_{\Sigma_\I} \log \varphi^s(A_{x|_n}) d\mu(x)\]
where we note that the existence of the limit is again guaranteed by subadditivity. It follows directly from the relevant definitions that 
\[\Lambda_s((A_i)_{i\in \I}; \mu)=\left\{\begin{array}{cl}\sum_{\ell=1}^{\lfloor s\rfloor} \lambda_\ell((A_i)_{i \in \I} ; \mu) + (s-\lfloor s\rfloor) \lambda_{\lceil s\rceil} ((A_i)_{i \in \I};\mu)
&\text{if }0 \leq s \leq d,\\ \frac{s}{d}\sum_{\ell=1}^d \lambda_\ell ((A_i)_{i \in \I} ; \mu)&\text{if }d \geq s.\end{array}\right.\] 

For every $s \geq 0$ we define the pressure of $(T_i)_{i \in \I}$ with parameter $s$ to be the quantity
\[P((A_i)_{i \in \I}; s):=\lim_{n \to \infty} \frac{1}{n} \log \sum_{\iii \in \I^n} \varphi^s(A_\iii)\]
which is well-defined by subadditivity. The following result adapts earlier work of A. K\"aenm\"aki in \cite{Ka04}; a direct proof is briefly outlined in \cite[\S3.1]{MoSe19}.
\begin{proposition}\label{pr:aff-fund}
Let $(T_i)_{i \in \I}$ be an affine IFS acting on $\R^d$ which is contracting with respect to some fixed norm on $\R^d$, and let $(A_i)_{i \in \I} \in \GL_d(\R)^\I$ denote the vector of linearisations of $(T_i)_{i \in \I}$.  Then:
\begin{enumerate}[(i)]
\item
The function $[0,\infty) \to \R$ defined by $s \mapsto P((A_i)_{i \in \I} ; s)$ is continuous and strictly decreasing, is positive at $s=0$, and tends to $-\infty$ in the limit as $s \to \infty$.
\item
For each fixed $\mu \in \mathcal{M}_\sigma(\Sigma_\I)$ the function $[0,\infty) \to \R$ defined by $s \mapsto h(\mu) + \Lambda_s((A_i)_{i\in \I}; \mu)$ is continuous and strictly decreasing, is non-negative at $s=0$, and  tends to $-\infty$ in the limit as $s \to \infty$.
\end{enumerate}
\end{proposition}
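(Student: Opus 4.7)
The plan is to reduce both parts to a single quantitative observation. Since each $T_i$ contracts with respect to some norm on $\R^d$ and all norms on $\R^d$ are equivalent, there exist constants $C\geq 1$ and $c,c_1 \in (0,1)$ such that for all $n\geq 1$ and all $\iii\in\I^n$,
\[c_1^n \;\leq\; \sigma_d(A_\iii)\;\leq\;\sigma_1(A_\iii)\;\leq\;Cc^n,\]
the upper bound coming from the contractive operator norm of $A_\iii$ together with norm equivalence, and the lower bound from $\sigma_d(A_\iii)\geq \prod_{k=1}^n \sigma_d(A_{i_k})$ and the finiteness of $\I$.

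For (i) I would observe that for every $B\in\GL_d(\R)$ the map $u\mapsto \log\varphi^u(B)$ is absolutely continuous on $[0,\infty)$, with derivative equal to $\log\sigma_{\lceil u\rceil}(B)$ at each non-integer $u\in(0,d)$ and equal to $\frac{1}{d}\log|\det B|$ for $u>d$. In either regime this derivative lies in $[\log\sigma_d(B),\log\sigma_1(B)]$, so
\[(t-s)\log \sigma_d(B) \;\leq\; \log\varphi^t(B)-\log\varphi^s(B) \;\leq\; (t-s)\log\sigma_1(B)\]
for all $0\leq s<t$. Applied with $B=A_\iii$ for each $\iii\in\I^n$, summing, dividing by $n$ and letting $n\to\infty$ this yields
\[(t-s)\log c_1 \;\leq\; P((A_i)_{i\in\I};t)-P((A_i)_{i\in\I};s) \;\leq\; (t-s)\log c,\]
which simultaneously delivers Lipschitz continuity and strict monotonicity. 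The identity $P((A_i)_{i\in\I};0)=\log \#\I>0$ is immediate, and the resulting bound $P((A_i)_{i\in\I};s)\leq \log\#\I+s\log c$ furnishes the limit $-\infty$.

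Part (ii) follows by running exactly the same pointwise derivative argument with summation over $\I^n$ replaced by integration against $d\mu$ and then dividing by $n$ before passing to the limit. This produces the analogous bound
\[(t-s)\log c_1 \;\leq\; \Lambda_t((A_i)_{i\in\I};\mu)-\Lambda_s((A_i)_{i\in\I};\mu) \;\leq\; (t-s)\log c,\]
equivalent via the piecewise-linear formula for $\Lambda_s$ to the statement that every Lyapunov exponent $\lambda_k(\mu)$ is bounded above by $\log c<0$. Adding the constant $h(\mu)\in[0,\log\#\I]$ preserves strict monotonicity and continuity, delivers non-negativity at $s=0$ (where $\Lambda_0\equiv 0$ and the function reduces to $h(\mu)$), and preserves divergence to $-\infty$ as $s\to\infty$. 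No serious obstacle is anticipated: the argument is essentially bookkeeping around the uniform singular-value bounds of the first paragraph, and the only mildly technical point is handling the piecewise structure of $\varphi^u$ at the integers $u=1,\ldots,d$ and at the transition $u=d$, which is subsumed in the two-sided derivative bracket described above.
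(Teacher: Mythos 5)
Your argument is correct and is essentially the standard proof: the paper itself does not spell one out (it cites K\"aenm\"aki \cite{Ka04} and the outline in \cite[\S3.1]{MoSe19}), and the proof given there rests on exactly the same two ingredients you use, namely the uniform bounds $c_1^n\leq\sigma_d(A_\iii)\leq\sigma_1(A_\iii)\leq Cc^n$ with $c<1$ coming from contractivity and invertibility, and the resulting two-sided Lipschitz/monotonicity estimate $(t-s)\log c_1\leq P(t)-P(s)\leq (t-s)\log c$ (and its analogue for $\Lambda_s$). No gaps.
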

The preceding proposition guarantees the soundness of the following definition:
\begin{definition}
Let $(T_i)_{i \in \I}$ be an affine IFS acting on $\R^d$ which is contracting with respect to some fixed norm on $\R^d$ and let $(A_i)_{i \in \I} \in \GL_d(\R)^\I$ denote the vector of linearisations of $(T_i)_{i \in \I}$. Then the \emph{affinity dimension} of $(T_i)_{i \in \I}$, denoted $\dimaff (T_i)_{i\in \I}$, is defined to be the unique $s \geq 0$ such that $P((A_i)_{i \in \I}; s)=0$. For every $\mu \in \mathcal{M}_\sigma(\Sigma_\I)$ the \emph{Lyapunov dimension} of $(T_i)_{i \in \I}$ with respect to $\mu$, denoted $\dimlyap  ((T_i)_{i \in \I}; \mu)$, is likewise defined to be the unique $s \geq 0$ such that $h(\mu) + \Lambda_s((A_i)_{i \in \I}; \mu)=0$. 
\end{definition} 
The following result is a special case of a very general variational principle proved by Y.-L. Cao, D.-J. Feng and W. Huang in \cite{CaFeHu08}:
\begin{proposition}\label{pr:yadda}
Let $(T_i)_{i \in \I}$ be an affine IFS acting on $\R^d$ which is contracting with respect to some fixed norm on $\R^d$ and let $(A_i)_{i \in \I} \in \GL_d(\R)^\I$ denote the vector of linearisations of $(T_i)_{i \in \I}$. Then for every $s \geq 0$,
\begin{equation}\label{eq:eqm}P((A_i)_{i \in \I} ; s) = \sup\left\{h(\mu)+\Lambda_s((A_i)_{i \in \I}; \mu) \colon \mu \in \mathcal{M}_\sigma(\Sigma_\I)\right\}\end{equation}
and this supremum is attained by a measure which is ergodic with respect to $\sigma$.
\end{proposition}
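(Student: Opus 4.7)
The plan is to reduce the statement directly to the general subadditive variational principle of Cao, Feng and Huang, after recognising that the data of the proposition furnish a subadditive potential on the full shift $\Sigma_\I$ to which that principle applies.

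First I would fix $s\geq 0$ and introduce the sequence of functions $\phi_n \colon \Sigma_\I \to \R$ defined by $\phi_n(x):=\log \varphi^s(A_{x|_n})$. These are locally constant (depending only on the first $n$ coordinates of $x$), hence continuous; and since $x_{n+m}|_{n+m}= x|_n \cdot (\sigma^n x)|_m$ as words, one has $A_{x|_{n+m}}=A_{x|_n}A_{(\sigma^n x)|_m}$, so the submultiplicativity of $\varphi^s$ quoted from Falconer's paper yields $\phi_{n+m}(x)\leq \phi_n(x)+\phi_m(\sigma^n x)$ for every $n,m\geq 1$ and $x\in \Sigma_\I$. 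Thus $\Phi:=(\phi_n)_{n\geq 1}$ is a subadditive sequence of continuous potentials over the shift $(\Sigma_\I,\sigma)$. Moreover the uniform bound $\phi_n(x)\leq n\log \max_i \|A_i\|^s$ ensures the integrability needed for the variational principle.

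Next I would identify the two sides of the equation with the standard quantities in the subadditive thermodynamic formalism. The subadditive topological pressure of $\Phi$ in the sense of Cao--Feng--Huang is precisely
\[\lim_{n\to\infty} \frac{1}{n}\log \sum_{\iii\in \I^n} \sup_{x\in[\iii]} e^{\phi_n(x)} = \lim_{n\to\infty} \frac{1}{n}\log \sum_{\iii\in\I^n} \varphi^s(A_\iii)=P((A_i)_{i\in\I};s),\]
since $\phi_n$ is constant on each cylinder $[\iii]$. On the invariant-measure side, the Fekete-type limit defining the subadditive Lyapunov integral coincides with $\Lambda_s((A_i)_{i\in\I};\mu)$. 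The subadditive variational principle then gives the identity \eqref{eq:eqm}.

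For the final assertion I would invoke upper semicontinuity of $\mu\mapsto h(\mu)+\Lambda_s((A_i)_{i\in\I};\mu)$ on the compact metrisable space $\mathcal{M}_\sigma(\Sigma_\I)$: entropy on a full shift is upper semicontinuous, and $\mu\mapsto \Lambda_s((A_i)_{i\in\I};\mu)$ is upper semicontinuous as an infimum (by subadditivity) of continuous affine functionals $\mu\mapsto \tfrac1n\int \phi_n\,d\mu$. Hence an equilibrium measure $\mu^\ast$ exists. To extract an ergodic one, I would apply the ergodic decomposition $\mu^\ast=\int \mu_\omega\,dP(\omega)$: entropy is affine, and the subadditive ergodic theorem ensures that $\Lambda_s((A_i)_{i\in\I};\mu^\ast)=\int \Lambda_s((A_i)_{i\in\I};\mu_\omega)\,dP(\omega)$, so the integrand equals $P((A_i)_{i\in\I};s)$ for $P$-almost every $\omega$; any such $\mu_\omega$ is an ergodic equilibrium measure. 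The main obstacle here is the verification that this last integral identity remains valid for subadditive potentials; this is exactly the content of the subadditive ergodic theorem combined with dominated convergence, which is applicable because of the uniform bound on the $\phi_n/n$ noted above.
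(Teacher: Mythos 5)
Your proposal is correct and follows essentially the same route as the paper, which simply presents this proposition as a special case of the Cao--Feng--Huang subadditive variational principle; you have merely spelled out the reduction (submultiplicativity of $\varphi^s$ giving a subadditive potential sequence, identification of the pressure on cylinders) and the standard argument for an ergodic maximiser via upper semicontinuity and ergodic decomposition. Both of these verifications are sound.
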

We refer to a measure which attains the supremum \eqref{eq:eqm} as a \emph{$\varphi^s$-equilibrium state} of $(A_i)_{i \in \I}$. We finally note the following variational principle, originally due to A. K\"aenm\"aki in \cite{Ka04}, which may be obtained directly from the combination of the two propositions above:
\begin{corollary}
Let $(T_i)_{i \in \I}$ be an affine IFS acting on $\R^d$ which is contracting with respect to some fixed norm on $\R^d$. Then
\[\dimaff (T_i)_{i \in \I} = \sup\left\{ \dimlyap ((T_i)_{i \in \I}; \mu) \colon \mu \in \mathcal{M}_\sigma(\Sigma_\I)\right\}\]
and this supremum is attained by a measure which is ergodic with respect to $\sigma$.
\end{corollary}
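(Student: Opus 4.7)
The plan is to deduce this corollary directly from the combination of Proposition \ref{pr:aff-fund} and Proposition \ref{pr:yadda}, viewing $\dimaff$ and $\dimlyap$ as the (unique) zeros of the functions $s\mapsto P((A_i)_{i\in\I};s)$ and $s\mapsto h(\mu)+\Lambda_s((A_i)_{i\in\I};\mu)$ respectively, and using the variational identity for pressure to compare these zeros. Throughout, write $s^\ast:=\dimaff(T_i)_{i\in\I}$ and, for each $\mu\in\mathcal{M}_\sigma(\Sigma_\I)$, $t_\mu:=\dimlyap((T_i)_{i\in\I};\mu)$; set $f_\mu(s):=h(\mu)+\Lambda_s((A_i)_{i\in\I};\mu)$ and $F(s):=P((A_i)_{i\in\I};s)$, so that by Proposition \ref{pr:yadda} one has $F(s)=\sup_\mu f_\mu(s)$.

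First I would establish the upper bound $t_\mu\leq s^\ast$ for every $\mu\in\mathcal{M}_\sigma(\Sigma_\I)$. Suppose for contradiction that $t_\mu>s^\ast$ for some $\mu$. By Proposition \ref{pr:aff-fund}(ii) the function $f_\mu$ is strictly decreasing with unique zero at $t_\mu$, so $f_\mu(s^\ast)>0$. But then
\[
F(s^\ast)=\sup_{\nu\in\mathcal{M}_\sigma(\Sigma_\I)} f_\nu(s^\ast)\geq f_\mu(s^\ast)>0,
\]
contradicting the definition of $s^\ast$ as the zero of $F$ (Proposition \ref{pr:aff-fund}(i)). Taking the supremum over $\mu$ gives $\sup_\mu t_\mu\leq s^\ast$.

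Next I would produce a measure attaining the supremum. Apply Proposition \ref{pr:yadda} at the parameter $s=s^\ast$: the supremum defining $F(s^\ast)=0$ is attained by some $\sigma$-ergodic measure $\mu^\ast\in\mathcal{M}_\sigma(\Sigma_\I)$, so that
\[
h(\mu^\ast)+\Lambda_{s^\ast}((A_i)_{i\in\I};\mu^\ast)=F(s^\ast)=0.
\]
Since $s\mapsto f_{\mu^\ast}(s)$ has $t_{\mu^\ast}$ as its unique zero by Proposition \ref{pr:aff-fund}(ii), this forces $t_{\mu^\ast}=s^\ast$. Combined with the previous step, this shows that the supremum is attained at the ergodic measure $\mu^\ast$ and equals $s^\ast=\dimaff(T_i)_{i\in\I}$, completing the proof.

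I do not anticipate any serious obstacle here: all the analytical content, namely the existence and strict monotonicity of the pressure function, the analogous monotonicity of the measure-theoretic quantity $h(\mu)+\Lambda_s((A_i)_{i\in\I};\mu)$, and the existence of an ergodic equilibrium state, has already been packaged into Propositions \ref{pr:aff-fund} and \ref{pr:yadda}. The corollary is essentially a formal consequence: once one recognises $\dimaff$ and $\dimlyap(\cdot;\mu)$ as zeros of families of strictly decreasing functions related by a supremum, the variational identity transfers automatically to a variational identity between their zeros.
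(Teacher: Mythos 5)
Your proposal is correct and is precisely the "direct combination" of Propositions \ref{pr:aff-fund} and \ref{pr:yadda} that the paper invokes without spelling out: the upper bound via strict monotonicity of $s\mapsto h(\mu)+\Lambda_s$, and attainment by evaluating the ergodic equilibrium state at $s=\dimaff(T_i)_{i\in\I}$. No gaps.
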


\section{Approximation by Schottky subsystems}\label{sec.core}

\subsection{The core technical result}

We now introduce the key technical result underlying Theorem \ref{th:main}. Before the statement, we introduce a further notation: for an integer $k \in \N$ and a set of finite words $W$, we denote by $W^k$ all $k$-fold concatenations of elements of $W$, i.e.~ $W^k=\{\iii_1 \ldots \iii_k : \iii_j \in W\}$. Similarly, for a finite word $\iii$ we denote by $\iii^k$ the concatenation $\iii \iii \cdots \iii$ of $k$ copies of $\iii$. We also say that a word $\iii \in \I^*$ is \emph{suffixed} by a word $\kkk \in \I^*$ if either $\iii=\kkk$ or if $\iii=\jjj\kkk$ for some $\jjj \in \I^*$.

\begin{theorem}\label{thm.key.intrinsic}
Let $G$ be a real reductive group, $\mathcal{I}$ a finite set and $(g_i)_{i \in \mathcal{I}}$ an ordered sequence of elements of $G$ such that $\{g_i \colon i \in \mathcal{I}\}$ generates a Zariski-dense subsemigroup of $G$. Suppose that there exist $\alpha ,\beta>0$ and $x \in \mathfrak{a}^+$ with the following properties: for infinitely many natural numbers $n$ there exists a set of words $\mathcal{I}_n \subseteq \mathcal{I}^n$ such that $\# \mathcal{I}_n \geq e^{\alpha n}$ and such that for every $\jjj \in \mathcal{I}_n$ we have
\begin{equation}\label{eq.cartan.bound0}
\left\|\frac{1}{n}\kappa(g_\jjj)-x\right\| \leq \beta.    
\end{equation}
Then there exists $r>0$ with the following property: for every fixed word $\iii_0 \in \mathcal{I}^\ast$ and any positive real numbers $\varepsilon, \alpha', \alpha'', \beta'$ satisfying $\varepsilon<r$, $\alpha'<\alpha<\alpha''$ and $\beta'>\beta$, there exist an integer $n \geq 1$ and a set $\mathcal{J} \subseteq \mathcal{I}^n$ such that:
\begin{enumerate}[(i)]
\item\label{it:tech-zd}
For every integer $\ell \geq 1$ the semigroup generated by $\{g_\jjj \colon \jjj \in \mathcal{J}^\ell\}$ is Zariski-dense in $G_c$.
\item\label{it:tech-schottky}
The collection $\{g_\jjj \colon \jjj \in \mathcal{J}^\ast\}$ is an $\varepsilon$-narrow $(r,\varepsilon)$-Schottky family.
\item\label{it:tech-cartan}
For every $\ell \geq 1$ and $\jjj \in \mathcal{J}^\ell$, $  \|\frac{1}{n \ell}\kappa(g_\jjj)-x\| \leq \beta'$.
\item\label{it:tech-cardinality} The cardinality of $\mathcal{J}$ satisfies $e^{\alpha' n} \leq \# \mathcal{J}\leq e^{\alpha'' n}$.
\item\label{it:tech-suffix}
The set $\mathcal{J}$ consists entirely of words suffixed by $\iii_0$.
\end{enumerate}
\end{theorem}

\begin{remark}
Trivially the conclusions \eqref{it:tech-cardinality} and \eqref{it:tech-suffix} also hold for $\mathcal{J}^\ell$, replacing $n$ with $n\ell$.
\end{remark}

\begin{remark}
The main technical ingredients used in proving the above result are, among others, due to Tits \cite{tits.free}, Goldsheid--Margulis \cite{goldsheid-margulis}, and Abels--Margulis--Soifer \cite{AMS}. The techniques used in combining these results go back to the work of Benoist in \cite{benoist.linear1,benoist.linear2} and of Quint in \cite{Quint.divergence}, who applied these ideas to study asymptotic properties of subsemigroups of linear groups.\end{remark}
\begin{remark}
Similar statements to \eqref{it:tech-schottky}--\eqref{it:tech-cardinality} above appear in \cite{sert.LDP} expressed in probabilistic language, where they are used to establish a large deviation principle (LDP) for random matrix products. These versions of \eqref{it:tech-schottky}--\eqref{it:tech-cardinality} were later obtained by Park \cite{park} in the more general setting of subshifts of finite type and H\"{o}lder cocycles (see also Mohammadpour \cite{mohammadpour} who later noted, in a similar way as in \cite{sert.LDP}, the consequences of these properties in application to multifractal formalism). 
\end{remark}

We will ultimately apply Theorem \ref{thm.key.intrinsic} in the following form which is adapted to the specific task of proving Theorem \ref{th:main}:
\begin{corollary}\label{corol.key.with.mu}
Let $G$ be a real reductive group, $\mathcal{I}$ a finite set and $(g_i)_{i \in \mathcal{I}}$ an ordered sequence of elements in $G$ such that $\{g_i \colon i \in \mathcal{I}\}$ generates a Zariski-dense semigroup in $G$. Let $\mu \in \mathcal{M}_\sigma(\Sigma_{\mathcal{I}})$ be an ergodic measure and $\iii_0$ a fixed word in $\mathcal{I}^\ast$. Then, there exist $r>0$ such that for every $\varepsilon \in (0,r)$ and $N \geq 1$, we can find an integer $n \geq N$, a set $\mathcal{J} \subseteq \mathcal{I}^n$ satisfying

\begin{enumerate}[(i)]
\item\label{it:co-zd}  For every integer $\ell \geq 1$ semigroup generated by $\{g_\jjj \colon \jjj \in \mathcal{J}^\ell\}$ is Zariski-dense in $G_c$.
\item\label{it:co-schottky} the collection $\{g_\jjj \colon \jjj \in \mathcal{J}^\ast\}$ is an $\varepsilon$-narrow $(r,\varepsilon)$-Schottky family.
\item\label{it:co-cartan} For every $\ell \geq 1$ and $\jjj \in \mathcal{J}^\ell$, $  \|\frac{1}{n \ell}\kappa(g_\jjj)-\vec{\lambda}(\mu)\| \leq \varepsilon$.
\item\label{it:co-cardinality} The cardinality of $\mathcal{J}$ satisfies $|\frac{1}{n} \log \# \mathcal{J} - h(\mu)|\leq \varepsilon$.
\item\label{it:co-suffix} The set $\mathcal{J}$ consists of words suffixed by $\iii_0$.
\end{enumerate} 
\end{corollary}

Before commencing the proof of Theorem \ref{thm.key.intrinsic} we establish a few technical results which will be employed in addition to the results and notions exposited in \S \ref{sec.preliminaries}.

\subsection{Further preliminary results}

The next lemma (due to Benoist in \cite{benoist.linear1}) will allow us to find generic elements that will be used to ensure Zariski-density in Theorem \ref{thm.key.intrinsic} in an $(r,\varepsilon)$-Schottky family.

\begin{lemma}[Zariski density of narrow Schottky families]\label{lemma.narrow.dense}
Let $\Gamma$ be a Zariski-dense subsemigroup of a real reductive group $G$ and let $g \in \Gamma$ be a loxodromic element. Then, there exists $r>0$ depending only on $\Gamma$ such that for every $\eta>0$ and $\varepsilon>0$, there exists an $(r,\varepsilon)$-Schottky semigroup in $\Gamma$ which is $\eta$-narrow around $((x_{i,g}^+),H_{i,g}^<)_{i=1,\ldots,d_S}$ and which is Zariski-dense in $\Gamma$.
\end{lemma}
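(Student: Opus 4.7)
The plan is to produce the desired Schottky semigroup by conjugating Zariski-dense generators of $\Gamma$ by a high power of $g$. Because $g$ is loxodromic, each representation $\rho_i(g)$ is proximal in $\GL(V_i)$, so there exists $r_0 > 0$ with $d(x_{i,g}^+, H_{i,g}^<) \geq 24 r_0$ for every $i = 1, \ldots, d_S$; set $r := r_0$. For $N$ large, each $\rho_i(g^N)$ is $(2r, \delta)$-proximal with $\delta \to 0$ as $N \to \infty$, while its attracting point and repelling hyperplane remain those of $\rho_i(g)$.

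Next, since $\Gamma$ is Zariski-dense in $G$, I would choose a finite set $h_1, \ldots, h_m \in \Gamma$ whose semigroup is Zariski-dense in $G$. For each $i$, the set $\{h \in G : \rho_i(h)(x_{i,g}^+) \in H_{i,g}^<\}$ is a proper algebraic subvariety, so its union over $i=1,\ldots,d_S$ is also proper. By Zariski-density of $\Gamma$, I may pre-multiply each $h_j$ by a suitable element of $\Gamma$ to move it into the complement of this union, while maintaining Zariski-density of the semigroup generated by $g$ together with the modified $h_j$'s.

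Now set $\tilde h_j := g^N h_j g^N$ for $N$ large. Decomposing $V_i$ as $\R x_{i,g}^+ \oplus H_{i,g}^<$ and writing $\rho_i(g)$ block-diagonally in this splitting, one checks that after normalisation by $\lambda_1(\rho_i(g))^{2N}$ the matrix $\rho_i(\tilde h_j)$ converges as $N \to \infty$ to the rank-one map whose image is $x_{i,g}^+$ and whose kernel is $H_{i,g}^<$, precisely because $\rho_i(h_j)(x_{i,g}^+) \notin H_{i,g}^<$. Combined with Lemma \ref{lemma.radius.vs.norm}, this shows that $\tilde h_j$ is $(2r, \varepsilon/2)$-proximal in each $\rho_i$ with attracting point and repelling hyperplane at distance $O((\sigma_2(\rho_i(g))/\sigma_1(\rho_i(g)))^N)$ from $x_{i,g}^+$ and $H_{i,g}^<$. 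Taking $N$ sufficiently large makes the finite collection $\mathcal{F} := \{g^N, \tilde h_1, \ldots, \tilde h_m\}$ $\eta$-narrow, and the Schottky distance condition $d(v_\gamma^+, H_{\gamma'}^<) \geq 12 r$ holds by the triangle inequality because $d(x_{i,g}^+, H_{i,g}^<) \geq 24 r$. Hence $\mathcal{F}$ is a $(2r, \varepsilon/2)$-Schottky family, and Corollary \ref{corol.family.to.semigroup}(1) yields that the semigroup it generates is an $(r, \varepsilon)$-Schottky semigroup in $\Gamma$.

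It remains to verify Zariski-density. The Zariski closure $H$ of $\langle \mathcal{F} \rangle$ is an algebraic subgroup of $G$, hence contains $g^{-N}$, and therefore $h_j = g^{-N} \tilde h_j g^{-N} \in H$ for every $j$. After replacing $N$ by a suitable multiple so that $g^N$ lies in the identity component of the Zariski closure of $\langle g \rangle$, one also obtains $g \in H$, so $H$ contains the set $\{g\} \cup \{h_j\}_{j=1}^m$ whose semigroup was arranged to be Zariski-dense in $G$; thus $H = G$. The hardest step is the joint transversality in the second paragraph: the $d_S$ conditions on the $h_j$'s in the distinct representations must be secured simultaneously without destroying Zariski-density of the resulting generators, but this is handled by the fact that a finite union of proper Zariski-closed subsets of $G$ has nonempty Zariski-open complement which any Zariski-dense subset of $G$ must meet.
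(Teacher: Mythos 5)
Your argument is correct in outline and is essentially a self-contained reconstruction of the standard ``ping-pong with a high power of a proximal element'' proof, whereas the paper simply cites Benoist's Lemma 3.6(v) from \emph{Propri\'et\'es asymptotiques des groupes lin\'eaires} together with Corollary \ref{corol.family.to.semigroup}. What your route buys is transparency: the convergence of $\lambda_1(\rho_i(g))^{-2N}\rho_i(g^N h_j g^N)$ to a nonzero multiple of the spectral projection onto $x_{i,g}^+$ along $H_{i,g}^<$ (valid precisely because $\rho_i(h_j)x_{i,g}^+\notin H_{i,g}^<$), the genericity of that transversality condition, and the recovery of Zariski density from $h_j=g^{-N}\tilde h_j g^{-N}$ are all sound, and the narrowness of the full semigroup (not just of the generators) follows from the ``moreover'' clause of Proposition \ref{prop.products.proximals}. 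Two points deserve care. First, the step ``pre-multiply each $h_j$ by an element of $\Gamma$ \ldots while maintaining Zariski-density'' is not justified as phrased, since modifying a dense generating set can destroy density; the clean fix, which your last paragraph almost states, is to choose the finite generating set \emph{from within} the Zariski-dense set $\Gamma\setminus\bigcup_i\{h:\rho_i(h)x_{i,g}^+\in H_{i,g}^<\}$ by the Noetherian selection argument of Lemma \ref{lemma.reduce.to.fg}, so that density is built in rather than preserved.

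Second, and more substantively: your $r=r_0$ is defined by $d(x_{i,g}^+,H_{i,g}^<)\ge 24r_0$, so it depends on $g$, while the lemma asserts $r$ depends only on $\Gamma$. This is not cosmetic in this paper: in the fifth modification of the proof of Theorem \ref{thm.key.intrinsic}, $\varepsilon$ is fixed \emph{before} the element $g$ is constructed, and the constraint $\varepsilon<r_1/10^3$ is imposed with $r_1$ the constant of this lemma; if $r_1$ degenerated with $g$ the argument would be circular. Note that any family that is $\eta$-narrow around $g$'s data and $(r,\varepsilon)$-Schottky forces $6r\le \min_i d(x_{i,g}^+,H_{i,g}^<)+2\eta$, so uniformity in $g$ can only hold on a class of loxodromic elements whose attracting points stay uniformly away from their repelling hyperplanes. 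That is exactly what the Abels--Margulis--Soifer constant $r(\Gamma)$ of Theorem \ref{thm.AMS} supplies, and it is how the lemma is used (the $g$ fed into it is a power of an $(r_0/4,\varepsilon)$-proximal element). To match the stated lemma you should therefore anchor $r_0$ to the AMS constant of $\Gamma$ rather than to the particular $g$, observing that the relevant $g$ satisfies $d(x_{i,g}^+,H_{i,g}^<)\ge r(\Gamma)/2$.
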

\begin{proof}
This follows readily by combining \cite[Lemma 3.6(v)]{benoist.linear1} and Corollary \ref{corol.family.to.semigroup}.
\end{proof}

The next observation (which will be familiar to specialists) will allow us to pass to certain convenient sub-semigroups while retaining the same Zariski closure.
\begin{lemma}\label{lemma.dense.powers}
Let $\Gamma_0$ be a finitely generated semigroup in $\GL_d(\R)$ with Zariski-connected Zariski-closure $G$. Then for any Zariski-dense subsemigroup $\Gamma<\Gamma_0$ there exist $\gamma_1,\ldots,\gamma_t \in \Gamma$ such that for every $t$-tuple of positive integers $(n_i)_{i=1,\ldots,t}$ the semigroup generated by $\gamma_1^{n_1},\ldots, \gamma_t^{n_t}$ is Zariski-dense in $G$.
\end{lemma}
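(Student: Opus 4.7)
The plan is to combine a Noetherian reduction with an identification of the Zariski closure of a cyclic semigroup in an algebraic group. First, since the Zariski topology on $G$ is Noetherian and the family $\{\overline{\langle F\rangle}^Z : F\subseteq\Gamma\text{ finite}\}$ is an ascending family of Zariski-closed subvarieties of $G$ whose union equals $\overline{\Gamma}^Z = G$, this family stabilises at a finite subset, providing $\delta_1,\ldots,\delta_s\in\Gamma$ whose generated semigroup is already Zariski-dense in $G$.

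The second ingredient is the following structural fact: for any $\gamma\in G$, the Zariski closure $H_\gamma := \overline{\langle\gamma\rangle}^Z$ is a Zariski-closed subgroup of $G$ (the Zariski closure of any semigroup in an algebraic group is a subgroup), and moreover, if $H_\gamma$ is itself Zariski-connected, then $\overline{\langle\gamma^n\rangle}^Z = H_\gamma$ for every integer $n\ge 1$. This follows from the identity
\[
\langle\gamma\rangle \;=\; \bigcup_{r=1}^{n}\gamma^r\cdot\{\gamma^{nm}\colon m\ge 0\},
\]
which, upon taking Zariski closures, yields $H_\gamma = \bigcup_{r=1}^{n}\gamma^r\cdot\overline{\langle\gamma^n\rangle}^Z$. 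This exhibits the Zariski-connected set $H_\gamma$ as a finite union of cosets of the Zariski-closed subgroup $\overline{\langle\gamma^n\rangle}^Z$; since distinct cosets are disjoint and closed, connectedness forces them all to coincide, giving $\overline{\langle\gamma^n\rangle}^Z = H_\gamma$.

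With these two ingredients in hand, it suffices to produce $\gamma_1,\ldots,\gamma_t \in \Gamma$ such that (a) they generate a Zariski-dense subsemigroup of $G$, and (b) each $H_{\gamma_j}$ is Zariski-connected. Under (b), the previous paragraph gives $\overline{\langle\gamma_j^{n_j}\rangle}^Z = H_{\gamma_j} \ni \gamma_j$ for every $n_j \ge 1$. Hence the Zariski closure of the semigroup generated by $\gamma_1^{n_1},\ldots,\gamma_t^{n_t}$ contains each $\gamma_j$, and consequently contains the Zariski-dense subsemigroup $\langle\gamma_1,\ldots,\gamma_t\rangle$, forcing it to equal $G$.

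The main obstacle is producing the $\gamma_j$'s. A naive attempt of replacing $\delta_i$ with the power $\delta_i^{k_i}$, where $k_i := [H_{\delta_i} : H_{\delta_i}^\circ]$, would render each cyclic Zariski closure connected (applying the decomposition above gives $\overline{\langle\delta_i^{k_i}\rangle}^Z = H_{\delta_i}^\circ$), but this can collapse the joint Zariski density; for instance, if some $\delta_i$ is of finite order, then $\delta_i^{k_i}$ is the identity. The plan is instead to \emph{enrich} rather than replace: for each $\delta_i$ with non-connected $H_{\delta_i}$, adjoin to the generating set a composite $\delta_i\delta$ with $\delta\in\Gamma$ chosen so that $H_{\delta_i\delta}$ is Zariski-connected. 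The existence of such $\delta$ inside $\Gamma$ follows from the Zariski-density of $\Gamma$ together with the observation that the locus of $g\in G$ for which $H_{\delta_i g}$ fails to be Zariski-connected is contained in a proper subvariety of $G$ carved out by torsion-type conditions on the Jordan decomposition of $\delta_i g$. The resulting enriched finite set still generates a Zariski-dense subsemigroup of $G$, and now every generator has Zariski-connected cyclic closure, so the preceding reduction concludes the argument.
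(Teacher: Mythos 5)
Your second ingredient (the coset decomposition showing that $\overline{\langle\gamma^n\rangle}^Z=\overline{\langle\gamma\rangle}^Z$ whenever the latter is Zariski-connected) is correct and is exactly the mechanism the paper exploits. But both of the other steps contain genuine gaps. First, the ``Noetherian reduction'' is invalid: the Zariski topology is Noetherian in the sense of the \emph{descending} chain condition on closed sets, so an ascending directed family of closed subvarieties need not stabilise, and its union need not be closed (let alone equal to $G$). The statement you are invoking is precisely Lemma \ref{lemma.reduce.to.fg} of the paper, and it is false without the hypothesis that the ambient semigroup $\Gamma_0$ is finitely generated: the set of finite-order elements of $\SO(2,\R)$ is Zariski-dense, yet every finite subset generates a finite (hence Zariski-closed, proper) subgroup. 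Your argument never uses the finite generation of $\Gamma_0$; the correct proof does, via Tits's lemma and the Burnside--Schur theorem.

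Second, the claim that $\{g\in G: \overline{\langle \delta_i g\rangle}^Z \text{ is disconnected}\}$ lies in a proper subvariety is false. This locus is in general only a \emph{countable union} of proper subvarieties and can be Zariski-dense: in $\SL_2(\R)$ the elliptic elements of finite order form such a dense countable union, and in $R_{\C/\R}\mathbb{G}_m$ the elements of rational argument do likewise. Since $\Gamma$ is itself countable, Zariski-density of $\Gamma$ does not by itself produce the required $\delta$, so the existence of the enriching element is not established. There is also an internal inconsistency: if you merely \emph{adjoin} $\delta_i\delta$ while keeping $\delta_i$, the generator $\delta_i$ still has disconnected cyclic closure, and if you instead discard $\delta_i$ you must re-argue density. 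The paper avoids all of this with a uniform statement: by \cite[Lemma 4.2]{tits.free}, for the finitely generated group $\Gamma^{\pm}$ there is a single exponent $n_\Gamma$ such that $\overline{\langle\gamma^{n_\Gamma}\rangle}^Z$ is Zariski-connected for \emph{every} $\gamma$; one then checks via Burnside--Schur that the $n_\Gamma$-th powers of elements of $\Gamma$ still generate a Zariski-dense semigroup, extracts finitely many by Lemma \ref{lemma.reduce.to.fg}, and concludes by your connectedness argument. Some such uniform input from finite generation appears unavoidable, and it is the missing idea here.
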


To prove this lemma we first require the following observation which will also be employed in proving the main result.

\begin{lemma}\label{lemma.reduce.to.fg}
Let $\Gamma_0<G$ be as in the previous lemma and $S$ a Zariski-dense subset of $\Gamma_0$. Then there exists a finite subset of $S$ which generates a Zariski-dense subsemigroup of $\Gamma_0$. 
\end{lemma}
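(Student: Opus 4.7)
The plan is to proceed by induction on $\dim G$, exploiting the classical fact that the Zariski closure of any subsemigroup of $\GL_d(\R)$ is itself an algebraic subgroup. For each finite $T\subseteq S$, write $H_T:=\overline{\langle T\rangle_{sg}}^Z$, so that $H_T$ is an algebraic subgroup of $G$; the goal is to produce a finite $T\subseteq S$ with $H_T=G$.

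First I would select $T_0\subseteq S$ finite maximising $\dim H_{T_0}$, which makes sense since $\dim H_T$ is an integer bounded above by $\dim G$. For any $s\in S$, the inclusion $H_{T_0}\subseteq H_{T_0\cup\{s\}}$ combined with dimension-maximality forces equality of identity components: $H_{T_0\cup\{s\}}^{\circ}=H_{T_0}^{\circ}=:H^{\circ}$. Since the identity component is normal in any algebraic group, the element $s$ normalises $H^{\circ}$, so $S\subseteq N_G(H^{\circ})$; Zariski density of $S$ together with Zariski-closedness of the normaliser then yields $H^{\circ}\trianglelefteq G$.

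The base case $\dim G=0$ is trivial. For the inductive step, if $H^{\circ}=G$ then Zariski-connectedness of $G$ forces $H_{T_0}=G$ and we set $S_0:=T_0$. Otherwise $\bar G:=G/H^{\circ}$ is a Zariski-connected algebraic group of strictly smaller dimension, and the images $\pi(\Gamma_0)$ and $\pi(S)$ under the quotient $\pi\colon G\to\bar G$ inherit all the hypotheses of the lemma in $\bar G$. The inductive hypothesis supplies a finite $\bar T\subseteq\pi(S)$ whose generated semigroup is Zariski dense in $\bar G$; lifting $\bar T$ to a finite $T_1\subseteq S$ gives $\pi(H_{T_1})=\bar G$, i.e.~$H_{T_1}\cdot H^{\circ}=G$, so that $S_0:=T_0\cup T_1$ satisfies $H_{S_0}\supseteq H_{T_0}\cup H_{T_1}\supseteq H^{\circ}\cdot H_{T_1}=G$.

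The main delicate point is verifying that the induction really does decrease dimension, namely that $\dim H^{\circ}>0$ whenever $\dim G>0$. Were $H^{\circ}=\{e\}$ to occur with $\dim G>0$, then maximality of $\dim H_{T_0}$ would force $H_T$ to be a finite algebraic subgroup of $G$ for every finite $T\subseteq S$. Enumerating $S=\{s_1,s_2,\ldots\}$ and writing $T_n:=\{s_1,\ldots,s_n\}$, the ascending union $\bigcup_n H_{T_n}$ would be a locally finite subgroup of $G$ containing $S$, and hence Zariski-dense in $G$. But this union lies inside the finitely generated linear group generated by $\Gamma_0$, to which Selberg's lemma applies: any locally finite subgroup of such a group injects into its finite quotient by a torsion-free finite-index subgroup and is therefore itself finite. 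The Zariski closure of $S$ would then be finite, contradicting $\dim G>0$, so this case cannot arise and the induction goes through.
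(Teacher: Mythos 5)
Your proof is correct, but it reaches the conclusion by a genuinely different mechanism from the paper's. The two arguments share their first half: you obtain the distinguished connected subgroup $H^{\circ}$ by maximising $\dim H_{T}$ over finite $T\subseteq S$ (the paper simply asserts the existence of such a maximal identity component), and both deduce $H^{\circ}\trianglelefteq G$ from the Zariski-closedness of the normaliser and the density of $S$. They then diverge. The paper stays with a single contradiction argument: Tits' lemma \cite[Lemma 4.2]{tits.free} supplies a uniform $n_0$ with $\gamma^{n_0}$ generating a Zariski-connected closure for all $\gamma\in\Gamma_0^{\pm}$, whence every element of $S$ — and by density every element of $\Gamma_0^{\pm}$ — becomes trivial in $G/H$ after raising to the power $n_0$; Burnside--Schur then forces $G/H$ to be finite and connectedness gives $G=H$, a contradiction. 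You instead induct on $\dim G$ through the quotient $G/H^{\circ}$, which requires disposing of the degenerate possibility $H^{\circ}=\{e\}$; your treatment of that case via local finiteness of $\bigcup_n H_{T_n}$ and Selberg's lemma is sound, though you should note explicitly why the union sits inside $\Gamma_0^{\pm}$ (a Zariski closure generally escapes the abstract group, but here each $H_{T_n}$ is finite and therefore coincides with the finite group generated by $T_n$, which does lie in $\Gamma_0^{\pm}$). The trade-off is that the paper's route uses only tools it already imports for the surrounding lemmas (Tits' power lemma and Burnside--Schur), and as a by-product exhibits the uniform exponent killing $G/H$; your route avoids Tits' lemma but imports Selberg's lemma and the formalism of quotient algebraic groups, where one should be slightly careful that the relevant identities ($\pi(H_{T_1})=\bar G$, hence $\mathbf{H}_{T_1}\cdot\mathbf{H}^{\circ}=\mathbf{G}$) are verified at the level of algebraic groups so that questions about surjectivity of $\pi$ on real points do not interfere — a standard point that causes no difficulty here.
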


In particular, in this lemma, if $S$ is a subsemigroup of $\Gamma_0$, then it contains a finitely generated Zariski-dense subsemigroup. 

\begin{proof}
Let $H\leq G$ be a Zariski-connected linear algebraic subgroup which arises as the identity component of the Zariski closure of a subgroup of $G$ generated by finitely many elements of $S$, and which is maximal among all such subgroups. It is clear that a maximal subgroup of this type exists and is unique. If the conclusion of the lemma does not hold then $H$ is a proper subgroup of $G$ such that any finite collection of elements of $S$ generates a semigroup whose (Zariski) identity component is contained in $H$. It follows that $S$ normalises $H$ and, by Zariski-density, $H$ 
is normal in $G$. Furthermore, by \cite[Lemma 4.2]{tits.free} applied to the group $\Gamma_0^{\pm}$ generated by $\Gamma_0$ (which is also finitely generated) there exists $n_0 \geq 1$ such that for every $\gamma \in \Gamma^{\pm}_0$, the Zariski closure of the group generated by $\gamma^{n_0}$ is Zariski-connected. Then for each $\gamma \in S$,
the image $\overline{\gamma}^{n_0}$ of $\gamma^{n_0}$ in $G/H$ generates a finite and Zariski-connected group and therefore it is trivial, i.e.~$\overline{\gamma}^{n_0}=\iiid_{G/H}$. Since $S$ is Zariski-dense in $G$, the same relation holds for every $g \in \Gamma_0^\pm$, i.e.~ $\overline{g}^{n_0}=\iiid_{G/H}$. Since $\Gamma_0^\pm$ is finitely generated, by the Burnside--Schur theorem this implies that the image of $\Gamma_0^\pm$ in $G/H$ is finite. By Zariski-density we deduce that $H$ has finite index in $G$ and by Zariski-connectedness, we have $G=H$, a contradiction.
\end{proof}

\begin{proof}[Proof of Lemma \ref{lemma.dense.powers}]
By Lemma \ref{lemma.reduce.to.fg}, up to replacing $\Gamma$ by a finitely generated Zariski-dense subsemigroup of itself, we can suppose that $\Gamma$ is finitely generated.
By \cite[Lemma 4.2]{tits.free} applied to the group $\Gamma^{\pm}$ generated by $\Gamma$ (which is also clearly finitely generated), there exists $n_\Gamma\geq 1$ such that for every $\gamma \in \Gamma^{\pm}$, the Zariski closure of the group generated by $\gamma^{n_\Gamma}$ is Zariski-connected. Note, on the other hand, that for every $n \geq 1$, the semigroup generated by elements $\{\gamma^n \colon \gamma \in \Gamma\}$ is still Zariski-dense in $\Gamma^{\pm}$. Indeed, the latter Zariski closure -- which is an algebraic group, which we denote by $R$ -- is equal to the Zariski-closure of the group generated by $\{\gamma^n \colon \gamma \in \Gamma^\pm\}$. Since the set $\{\gamma^n \colon \gamma \in \Gamma^\pm\}$ is invariant by conjugation by $\Gamma^{\pm}$, the Zariski-closure of the group which it generates  is invariant by the Zariski-closure of $\Gamma^{\pm}$, which is $G$. Therefore, $R$ is a normal linear algebraic subgroup of $G$ such that the image $\overline{\Gamma}$ of $\Gamma$ in $G/R$ is Zariski-dense, torsion and finitely generated. It then  follows from  the Burnside--Schur theorem that $\overline{\Gamma}$ is finite and hence since $G$ is Zariski-connected, we have $G=R$ as required.
In view of this, by another application of Lemma \ref{lemma.reduce.to.fg}, we can choose $\gamma_1',\ldots,\gamma_t' \in \Gamma$ such that that the semigroup generated by $\gamma_1=(\gamma_1')^{n_\Gamma},\ldots, \gamma_t=(\gamma_t')^{n_\Gamma}$ is Zariski-dense in $G$. Now suppose that $m_1,\ldots,m_t$ are positive integers. By the choice of $n_\Gamma$, for each $i$ the Zariski-closure of the semigroup generated by $\gamma_i^{m_i}$ is connected, and clearly contains the semigroup generated by $\gamma_i$ as a Zariski-connected, finite-index subgroup. It follows that for each $i$ these two Zariski closures are identical, and in particular we have $\gamma_i \in \overline{\langle \gamma_i^{m_i}\rangle}^Z$ for every $i=1,\ldots,t$. The Zariski-closure of the semigroup generated by $\gamma_1^{m_1},\ldots,\gamma_t^{m_t}$ thus contains the Zariski-closure of the semigroup generated by $\gamma_1,\ldots,\gamma_t$, which is $G$, and we conclude that the semigroup generated by $\gamma_1^{m_1},\ldots,\gamma_t^{m_t}$  is Zariski-dense in $G$ as required.
\end{proof}

\subsubsection{Abels-Margulis-Soifer}


An important and non-trivial fact about Zariski-dense semigroups of reductive Lie groups is that every such semigroup contains a loxodromic element, see \cite{benoist-labourie,goldsheid-margulis,Prasad}. The following result of Abels-Margulis-Soifer \cite[Thm. 6.8]{AMS} extends this observation by showing that loxodromic elements are abundant in a certain sense: loxodromic elements can be obtained from an arbitrary semigroup element $g$ by perturbing $g$ with an element coming from a fixed finite set. This result will be essential in controlling the cardinality of the set $\mathcal{J}$ constructed in Theorem \ref{thm.key.intrinsic}.  

\begin{theorem}[Abels-Margulis-Soifer \cite{AMS}]\label{thm.AMS} 
Let $G$ be a Zariski-connected real reductive group and $\Gamma$ a Zariski-dense subsemigroup. Then there exists $r>0$ depending only on $\Gamma$ such that for every  $\varepsilon \in (0, r]$ we may choose a finite subset $F=F(r,\varepsilon,\Gamma)$ of $\Gamma$ with the property that for every $\gamma \in G$, there exists $f \in F $ such that $f\gamma$ is $(r,\varepsilon)$-proximal in $G$.
\end{theorem}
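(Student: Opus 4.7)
The strategy is to convert $\mathcal{I}_n$ into a narrow Schottky family via Abels--Margulis--Soifer together with two layers of pigeonhole, center it by concatenation with a long fixed ``pivoting'' word, and finally augment by a bounded number of Zariski-generating words of common length and suffix $\iii_0$. Write $\Gamma := \langle g_i : i \in \mathcal{I}\rangle^{\mathrm{sg}}$. Since $\Gamma$ is Zariski-dense in $G$ and $G_c$ has finite index, the sub-semigroup $\Gamma_c \subset \Gamma$ of elements lying in $G_c$ is Zariski-dense in $G_c$; by taking $|G/G_c|$-fold concatenations, we may assume $g_\jjj \in G_c$ for every $\jjj$ in play, preserving the hypotheses up to constants.

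Next I would apply Theorem \ref{thm.AMS} in $G_c$ to obtain a finite set $F \subset \Gamma_c$ such that each $g_\jjj$ for $\jjj \in \mathcal{I}_n$ becomes $(r_0, \varepsilon_0)$-proximal in $G_c$ upon post-multiplication by some $f_\jjj \in F$. A pigeonhole over $F$ extracts $\mathcal{I}_n^{(1)} \subseteq \mathcal{I}_n$ of cardinality $\geq e^{\alpha n}/\#F$ with a common $f^* \in F$, represented by a fixed word $\mathtt{f}$. A second pigeonhole over a finite $\eta$-cover of the compact product of projective spaces $\mathbf{P}(V_i)$ and of the spaces of hyperplanes in $\mathbf{P}(V_i)$ (for $i=1,\ldots,d_S$) extracts $\mathcal{I}_n^{(2)} \subseteq \mathcal{I}_n^{(1)}$ of comparable exponential size on which the attracting points and repelling hyperplanes of $g_\jjj f^*$ all lie within $\eta$ of fixed anchors $(\bar{v}_i^+, \bar{H}_i^<)_{i=1}^{d_S}$. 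Using Lemma \ref{lemma.narrow.dense}, I would find a Zariski-dense $(r_1,\varepsilon_1)$-Schottky sub-semigroup $S_0 \subset \Gamma_c$ arbitrarily narrow around these anchors, and fix a long word $\kkk \in \mathcal{I}^*$ representing an element of $S_0$ with attracting and repelling data even closer to the anchors. For each $\jjj \in \mathcal{I}_n^{(2)}$, form $\jjj' := \kkk\, \jjj\, \mathtt{f}\, \kkk\, \iii_0$, all of common length $n' := n + N_0$ and suffixed by $\iii_0$. By Proposition \ref{prop.products.proximals}, each $g_{\jjj'}$ is $(r,\varepsilon)$-proximal with attracting and repelling data Hausdorff-close to those of $g_\kkk$, so $\{g_{\jjj'} : \jjj \in \mathcal{I}_n^{(2)}\}$ is an $\varepsilon$-narrow $(r,\varepsilon)$-Schottky family; Corollary \ref{corol.family.to.semigroup} then yields that the full generated semigroup remains $\varepsilon$-narrow and $(r,\varepsilon)$-Schottky.

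To secure Zariski density of $\langle g_\jjj : \jjj \in \mathcal{J}^\ell \rangle^{\mathrm{sg}}$ in $G_c$ for every $\ell \geq 1$, I would choose $m_0$ with $g_{\iii_0}^{m_0} \in G_c$ and apply Lemma \ref{lemma.dense.powers} (after passing via Lemma \ref{lemma.reduce.to.fg} to a finitely generated Zariski-dense sub-semigroup) within the Zariski-dense sub-semigroup $\Gamma_c \cdot g_{\iii_0}^{m_0}$ of $G_c$. This produces $\gamma_1, \ldots, \gamma_t \in G_c$, each represented by a word $\mathtt{u}_j \in \mathcal{I}^*$ ending in $\iii_0^{m_0}$ (hence in $\iii_0$), with the property that any tuple of positive powers generates $G_c$ Zariski-densely. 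After replacing $n'$ by a common multiple of the $|\mathtt{u}_j|$'s, set $\mathtt{v}_j := \mathtt{u}_j^{n'/|\mathtt{u}_j|}$, of length $n'$ and ending in $\iii_0$, representing $\gamma_j^{n'/|\mathtt{u}_j|}$. Define
$$\mathcal{J} := \{\jjj' : \jjj \in \mathcal{I}_n^{(2)}\} \cup \{\mathtt{v}_j : j = 1, \ldots, t\} \subset \mathcal{I}^{n'}.$$
For every $\ell \geq 1$, the set $\{g_\jjj : \jjj \in \mathcal{J}^\ell\}$ contains $\{\gamma_j^{\ell n'/|\mathtt{u}_j|}\}_j$, which by Lemma \ref{lemma.dense.powers} generates $G_c$ Zariski-densely.

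Conclusion (4) follows from $\#\mathcal{J} \geq \#\mathcal{I}_n^{(2)} \geq e^{\alpha' n'}$ for $n'$ large and any $\alpha' < \alpha$. Conclusion (3) at $\ell = 1$ follows from Corollary \ref{Cartan.stability}: $\|\kappa(g_{\jjj'}) - \kappa(g_\jjj)\|$ is bounded by a constant depending only on $\kkk, \mathtt{f}, \iii_0$, hence $\|\kappa(g_{\jjj'})/n' - x\| \leq \beta + O(1/n') < \beta'$ for large $n$. For $\ell \geq 2$, Corollary \ref{corol.additive.cartan}, applicable via the Schottky property, gives additivity of $\kappa$ up to error $C_r \ell$, and dividing by $n' \ell$ preserves the bound. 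The main obstacle is the simultaneous word-length and suffix bookkeeping: every element of $\mathcal{J}$ must share the same length $n'$ and suffix $\iii_0$ while containing both the exponentially large Schottkified family and the $t$ Zariski-generating witnesses representing the exact elements $\gamma_j^{n'/|\mathtt{u}_j|}$. This is resolved by choosing $n'$ as a common multiple of the $|\mathtt{u}_j|$'s and by working inside the Zariski-dense sub-semigroup $\Gamma_c \cdot g_{\iii_0}^{m_0}$, which via Lemma \ref{lemma.dense.powers} delivers witnesses whose natural word representations already end in $\iii_0$.
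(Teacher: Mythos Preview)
Your proposal does not address the stated theorem. Theorem \ref{thm.AMS} is the Abels--Margulis--Soifer result, which the paper does not prove but merely cites from \cite{AMS}; there is no proof in the paper to compare against. What you have written is instead a proof sketch for Theorem \ref{thm.key.intrinsic}, and you explicitly invoke Theorem \ref{thm.AMS} as a tool in your second paragraph.

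Regarding your sketch as an attempt at Theorem \ref{thm.key.intrinsic}, the overall architecture parallels the paper's seven-step modification procedure, but there are two genuine gaps. First, the Zariski-generating witnesses $\mathtt{v}_j$ that you adjoin to $\mathcal{J}$ are produced via Lemma \ref{lemma.dense.powers} inside $\Gamma_c \cdot g_{\iii_0}^{m_0}$ with no constraint whatsoever on their attracting/repelling data, so there is no reason for the elements $g_{\mathtt{v}_j}$ to be compatible with the $\varepsilon$-narrow Schottky family $\{g_{\jjj'}\}$; conclusion (2) then fails for $\mathcal{J}^*$. The paper avoids this by drawing its generic elements $\theta_i$ from the narrow Schottky semigroup $\Gamma_g$ furnished by Lemma \ref{lemma.narrow.dense} (fifth modification), so that they automatically sit inside the Schottky family. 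Second, and relatedly, the Cartan projections $\kappa(g_{\mathtt{v}_j})$ are entirely uncontrolled: the $\gamma_j$ are selected for Zariski-density only, so $\|\kappa(g_{\mathtt{v}_j})/n' - x\|$ can be arbitrarily large, violating conclusion (3) already at $\ell=1$. The paper's seventh modification repairs this by sandwiching each $\gamma_j$-word as a single factor $\iii_{\gamma_j}^{m/m_j}$ alongside $(k-1)$ copies of a Cartan-controlled word $\iii_g$, so that the $\gamma_j$-contribution becomes negligible as $k \to \infty$; your construction places the $\mathtt{v}_j$ at full length $n'$ with no such dilution.
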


Before proceeding with the proof of Theorem \ref{thm.key.intrinsic}, we lastly introduce the following notation: for a subset $W$ of $\I^*$ we denote by $\vec{W}$ the set of right products of elements of $W$, i.e.~ $\vec{W}=\{g_{i_1}\ldots g_{i_n} \colon i_1\cdots i_n \in W\}$.

\subsection{Proof of Theorem \ref{thm.key.intrinsic}}
Define $\Gamma_1$ to be the semigroup generated by $\{g_i \colon i \in \mathcal{I}\}$. We begin the proof by fixing certain quantities which depend on $\Gamma_1$. For each $\gamma \in \Gamma_1$ we let $n_\gamma \in \N$ denote the minimal length of a word $\iii=i_1\cdots i_m \in \Sigma_\mathcal{I}^\ast$ such that $\gamma=g_{i_1}\ldots g_{i_m}$.

\textbullet ${}$ \textit{Groundwork: establishing parameters and constants.}  Define $\Gamma_c:=\Gamma_1 \cap G_c$. Clearly, $\Gamma_c$ is Zariski-dense in $G_c$. Let $N_1$ denote the number of connected components of $G$. Since $\Gamma_1$ is Zariski-dense in $G$ we may choose  $N_1$ elements $h_1,\ldots,h_{N_1} \in \Gamma_1$ such that every connected component of $G$ contains one of the elements $h_i$. Let $r_0>0$ be the minimum of the two constants $r>0$ given respectively by Lemma \ref{lemma.narrow.dense} and Theorem \ref{thm.AMS} applied to $\Gamma_c$. Let $d_S \geq 0$ be the semisimple rank of $G$, and let $(\rho_1,V_1),\ldots, (\rho_{d_S}, V_{d_S})$ be the distinguished representations of $G_c$ given by the application of Lemma \ref{lemma.tits.distinguished}. 

Now fix a positive real number $\varepsilon<r_0/64$, and fix some further quantities which depend on $\Gamma_1$ and on $\varepsilon$ as follows. Let $F_1=F_1(r,\varepsilon)$ be the finite subset of $\Gamma_c$ given by the application of Theorem \ref{thm.AMS} to the semigroup $\Gamma_c$ and group $G_c$ and let $N_2$ denote the cardinality of $F_1$. Let $N_3$ be minimal cardinality of a finite $\varepsilon/2$-net of the compact metric space $\prod_{i=1}^{d_S} P(V_i) \times P(V_i^\ast)$. Let $\iii_0$ be the arbitrary word specified in the statement of the proposition and let $g_{\iii_0} \in \Gamma_1$ be the corresponding group element. By replacing $\iii_0$ with a suitable power of itself if necessary, we assume without loss of generality that $g_{\iii_0} \in \Gamma_c$. Define
\[M:=|\iii_0|+\max\{n_\gamma \colon \gamma \in \{h_1,\ldots,h_{N_1}\}\} + \max\{n_f \colon f \in F_1\}.\]
By applying Corollary \ref{Cartan.stability} with $L_1=\{\id, h_1,\ldots,h_{N_1}\} \cup F_1$ and $L_2=\{g_{\iii_0}, \id\}$, it follows that there exists a real number $K>0$ such that each of the quantities
\[\|\kappa(g) - \kappa(h_ig)\|,\qquad \|\kappa(g)-\kappa (fg)\|, \qquad \|\kappa(g)-\kappa (gg_{\iii_0})\| \]
is bounded above by $K$ for every $g \in G$, every $i=1,\ldots,N_1$ and every $f \in F_1$.

We now choose a large integer $n_0 \in \mathbb{N}$ and proceed in several stages to modify the set $\mathcal{I}_{n_0}$ so as to construct the required integer $n \in \mathbb{N}$ and the collection  $\mathcal{J} \subseteq \mathcal{I}^{n}$ with properties \eqref{it:tech-zd}--\eqref{it:tech-suffix}.\\

\textbullet ${}$ \textit{First modification: reduction to a connected component.} Since $G$ has exactly $N_1$ connected components, by the pigeonhole principle we may choose a subset $W_1(n_0)'$ of $\mathcal{I}_{n_0}$ with cardinality at least $|\mathcal{I}_{n_0}|/N_1$ such that $\vec{W}_1'(n_0)$ lies in a single connected component of $G$. Clearly we may write this connected component as $h_j^{-1}G_c$ for some $j \in \{1,\ldots,N_1\}$. Choose a word $\lll=\ell_1\cdots \ell_n \in \I^*$, with length not greater than $M$, such that $g_{\ell_1}\cdots g_{\ell_n}=h_j$ and define $W_1(n_0):=\{\lll \jjj \colon \jjj \in W_1'(n_0)\}$. We see that $W_1(n_0)$ consists of words of length exactly $n_0+|\lll|$, has cardinality at least $|\mathcal{I}_{n_0}|/N_1$, and satisfies $\vec{W}_1(n_0)\subset G_c$. In view of the hypothesis \eqref{eq.cartan.bound0} and Corollary \ref{Cartan.stability} we have
\[\left\|\kappa(g)-n_0 x\right\|\leq n_0\beta+K\] 
for every $g \in \vec{W}_1(n_0)$. \\

\textbullet ${}$ \textit{Second modification: adjoining the fixed word $\iii_0$.}
Let $g_{\iii_0} \in \Gamma_c$ be the group element corresponding to the arbitrary word $\iii_0$ specified in the statement of the theorem. The second modification simply consists of adjoining $\iii_0$ as a suffix to the words in $W_1(n_0)$. Define $W_2(n_0):=\{\kkk \iii_0 \colon \kkk \in W_1(n_0)\}$.  Clearly $W_2(n_0)$ consists of words of length precisely $n_0+|\lll|+|\iii_0|$ and has the same cardinality as $W_1(n_0)$. The set $\vec{W}_2(n_0)$ is clearly a subset of $G_c$, and by a further application of Corollary \ref{Cartan.stability} we have
\[\left\|\kappa(g)-n_0 x\right\|\leq n_0\beta+2K\] 
for every $g \in \vec{W_2}(n_0)$.\\

\textbullet ${}$ \textit{Third modification: obtaining loxodromy using the theorem of Abels, Margulis and Soifer.} By Theorem \ref{thm.AMS}, for each element $g$ of $\vec{W}_2(n_0)$ there exists $f \in F_1$ such that the product $fg$ is $(r_0,\varepsilon)$-loxodromic. By a second pigeonhole argument, it follows that there exist an element $f_0 \in F_1$ and a subset $W_3(n_0)'$ of $W_2(n_0)$ with cardinality at least $|W_2(n_0)|/|F_1|$ such that $f_0g$ is $(r_0,\varepsilon)$-loxodromic for every $g \in \vec{W}_3'(n_0)$. Let $\lll'$ be a word of length $n_{f_0}$ which represents the group element $f_0$, and define $W_3(n_0):=\{\lll'\kkk \colon \kkk \in W_3'(n_0)\}$. Clearly $W_3(n_0)$ consists only of words whose length is precisely $n_0+|\lll|+|\iii_0|+n_{f_0}$ and has cardinality at least $|W_2(n_0)|/N_2=|W_1(n_0)|/N_2 \geq |\mathcal{I}_{n_0}|/N_1N_2$, and every element of $g \in \vec{W}_3(n_0)$ is $(r_0,\varepsilon)$-loxodromic, belongs to $G_c$ and satisfies
\begin{equation}\label{eq.cartan.bound2}\left\|\kappa(g)-n_0 x\right\|\leq n_0\beta+3K.  
\end{equation}
\smallskip

\textbullet ${}$ \textit{Fourth modification: narrowing.} 
As noted earlier, $d_S \geq 0$ is the semisimple rank of $G$ and $(\rho_1,V_1),\ldots,(\rho_{d_S}, V_{d_S})$ are the distinguished representations of $G_c$ given by Lemma \ref{lemma.tits.distinguished}. Using compactness we may partition $\prod_{i=1}^{d_S} P(V_i) \times P(V_i^\ast)$ into $N_3$ sets each of radius not greater than $\varepsilon/2$. By a third pigeonhole argument there exist a subset $W_4(n_0)$ of $W_3(n_0)$ with cardinality at least $|W_3(n_0)|/N_3\geq |\mathcal{I}_{n_0}|/N_1N_2N_3\geq e^{n\alpha}/N_1N_2N_3$ and an element $\mathcal{P}$ of the partition such that for every $g \in \vec{W}_4(n_0)$ the tuple of attracting points $x_{i,g}^+$ and repelling hyperplanes $H_{i,g}^<$ in the representations $\rho_i$ for $i=1,\ldots,d_S$ is an element of $\mathcal{P}$. By removing some elements from $W_4(n_0)$ if necessary, we choose $W_4(n_0)$ in such a manner that its cardinality satisfies the bounds
\begin{equation}\label{eq.w.cardinality}e^{\alpha n_0}/N_1N_2N_3\leq |W_4(n_0)| \leq 2e^{\alpha n_0}.\end{equation}
We observe in particular that $\vec{W}_4(n_0)$ is an $\varepsilon$-narrow set of $(r_0,\varepsilon)$-loxodromic elements. Therefore, by Corollary \ref{corol.family.to.semigroup} and the properties of $W_3(n_0)$ established previously, $W_4(n_0)$ consists of words whose length is precisely $n_1:=n_0+|\lll|+|\iii_0|+n_{f_0}\leq n_0+M$, and $\vec{W}_4(n_0)$ is an $\varepsilon$-narrow $(r_0/4,\varepsilon)$-Schottky family in $G_c$. The property \eqref{eq.cartan.bound2} clearly persists for elements of $\vec{W}_4(n_0)\subseteq \vec{W}_3(n_0)$, so in particular
\begin{equation}\label{eq.cartan.bound3}\left\|\kappa(g)-n_1 x\right\|\leq (n_1-n_0)\|x\|+n_0\beta+3K\leq n_1\beta + M\|x\|+3K\end{equation}
for every $g \in \vec{W}_4(n_0)$.

At this stage of the argument we have successfully constructed a set of words which could be shown to satisfy \eqref{it:pr-cardinality}--\eqref{it:pr-suffix}, but which \emph{a priori} need not satisfy \eqref{it:pr-zd}, since we have no knowledge of the Zariski closure of the semigroup generated by $\vec{W}_4(n_0)$ other than that it is contained in $G_c$. In the next stage of the argument we will start to address \eqref{it:pr-zd}, but in doing so we will sacrifice some control over the lengths of the words and over their Cartan vectors, creating defects which will be remedied in further stages of the construction.  \\

\textbullet ${}$ \textit{Fifth modification: obtaining Zariski density via Tits' lemmas.} 
Let $\Gamma_1^\pm$ denote the group generated by $\Gamma_1$, and note that this group is clearly also generated by the finite set $\{g_i \colon i \in \I\}$. The group $\Gamma_1^{\pm} \cap G_c$ is Zariski-dense in $G_c$, contains the semigroup $\Gamma_c$ and is finitely generated since it is a finite-index subgroup of the finitely-generated group $\Gamma_1^{\pm}$. By \cite[Lemma 4.2]{tits.free} there exists $n_\Gamma \in \N$ such that for every $\gamma \in \Gamma_1^{\pm} \cap G_c$, the Zariski-closure of the group generated by $\gamma^{n_\Gamma}$ is Zariski-connected.
Fix an element $g' \in \vec{W}_4(n_0)$ arbitrarily and define $g:=(g')^{n_\Gamma}$. By Lemma \ref{lemma.narrow.dense}, there exists a Zariski-dense $(r_0,\varepsilon)$-Schottky subsemigroup $\Gamma_g$ in $\Gamma_c$ that is $\varepsilon$-narrow around $((x_{i,g}^+),(H_{i,g}^<))$. By Lemma \ref{lemma.reduce.to.fg}, $\Gamma_g$ contains a finitely-generated Zariski-dense subsemigroup. It then follows by the same argument as was used in the proof of Lemma \ref{lemma.dense.powers} that the subsemigroup $\Gamma_{g, n_\Gamma}$ of $\Gamma_g$ which is generated by the $n_\Gamma^{th}$ powers of the elements of $\Gamma_g$ is Zariski-dense in $G_c$. By \cite[Proposition 4.4]{tits.free} (see also \cite[Lemme 4.3]{benoist.linear1}), there exists a Zariski-closed proper subset $\mathcal{F}_g$ of the semisimple quotient $G_c/Z(G_c)$ such that the union of all Zariski-closed, Zariski-connected proper subgroups of $G_c/Z(G_c)$ which contain the projection of $g$ is itself contained in $\mathcal{F}_g$. In particular, the pre-image $\tilde{\mathcal{F}}_g^c$ in $G_c$ of the complement $\mathcal{F}_g^c$ is nonempty and Zariski open. Since $G_c$ is Zariski-connected and hence irreducible, the intersection  $\Gamma_{g,n_\Gamma} \cap \tilde{\mathcal{F}}_g^c$  is non-empty and Zariski-dense in $G_c$. By Lemma \ref{lemma.reduce.to.fg} we may choose finitely many elements $\theta_1,\ldots, \theta_p$ of $ \Gamma_{g,n_\Gamma}$ so that the semigroup generated by $\vec{W}_4(n_0)\cup\{\theta_1,\ldots,\theta_p\}$ is Zariski-dense in $G_c$. Indeed, for any $i=1,\ldots,p$, the semigroup generated by $\vec{W}_4(n_0)\cup \{\theta_i\}$ is Zariski-dense in $G_c/Z(G_c)$ by the defining property of $\mathcal{F}_g$. Consequently, choosing the elements $\theta_1,\ldots,\theta_p$ so that the projection to $G_c/[G_c,G_c]$ of the semigroup which they generate  is Zariski-dense yields a list of elements with the desired properties. Now we choose words $\iii_{\theta_1},\ldots,\iii_{\theta_p}$ representing the elements $\theta_1,\ldots,\theta_p$ and set $W_5(n_0)$ to be the union of $W_4(n_0)$ and $\{\iii_{\theta_i} : i=1, \ldots, p\}$. Since the elements $\theta_i$ are elements of $\Gamma_g$, which is an $(r_0,\varepsilon)$-Schottky semigroup that is $\varepsilon$-narrow around $((x_{i,g}^+),(H_{i,g}^<))$, each $\theta_i$ is $(r_0,\varepsilon)$-loxodromic. The set $\vec{W}_4(n_0) \cup \{\theta_i : i=1,\ldots, p\}$ therefore consists of $(r_0/4,\varepsilon)$-loxodromic elements and is $2\varepsilon$-narrow. Since $r_0/32>\varepsilon>0$, it follows from the second clause of Corollary \ref{corol.family.to.semigroup} that $\vec{W}_5(n_0)$ is a $2\varepsilon$-narrow $(r_0/16, \varepsilon)$-Schottky family that generates a Zariski-dense subgroup of $G_c$. 

At this stage $W_5(n_0)$ comes closer to satisfying the required properties insofar as the Zariski closure of the semigroup generated by the corresponding elements is properly controlled. However, for the newly added elements $\iii_{\theta_i}$ the word-length and Cartan projection  are \emph{a priori} uncontrolled, and these words also \emph{a priori} lack the required suffix $\iii_0$. These problems will be remedied in two further modifications of the set $W_5(n_0)$.\\

\textbullet ${}$ \textit{Sixth modification: equalising the word lengths.}
We are in a position to apply Lemma \ref{lemma.dense.powers} to the semigroup  generated by $\vec{W}_5(n_0)$, which we denote by $\Gamma_5$. By the first clause of Corollary \ref{corol.family.to.semigroup} and by the inequalities $r_0/64>\varepsilon>0$, the semigroup $\Gamma_5$ is a $4\varepsilon$-narrow $(r_0/32,2\varepsilon)$-Schottky family. Denote by $\gamma_1',\ldots,\gamma_t'$  the elements of $\Gamma_5$ given by Lemma \ref{lemma.dense.powers} and set $\gamma_i:=(\gamma_i')^{n_\Gamma}$ for $i=1,\ldots, t$. Clearly the elements $\gamma_1,\ldots,\gamma_t$ also have the property stipulated in the conclusion of the lemma. For $i=1,\ldots, t$ choose an integer $m_i$ such that we may choose a word $\iii_{\gamma_i} \in \mathcal{I}^{m_i}$ representing the element $\gamma_i$. Denote by $m$ the product $m_1 \ldots m_t n_\Gamma n_1$, where recall that $n_0+|\lll|+|\iii_0|+n_{f_0}$ is the length of every word in $W_4(n_0)$. We now set $W_6(m)$ to be the set $\{\iii_{\gamma_i}^{m/m_i} : i=1,\ldots,t\} \cup W_4(n_0)^{m/n_1}$ and let $\Gamma_6$ denote the semigroup generated by $\vec{W}_6(m)$. Being a subset of $\Gamma_5$, $\Gamma_6$ is a $4\varepsilon$-narrow $(r_0/32,2\varepsilon)$-Schottky family.\\


\textbullet ${}$ \textit{Seventh modification: controlling the Cartan vectors.}
Fix an element $g \in \vec{W}_4(n_0)^{m/n_1}$ which has the form $g=h^{n_\Gamma}$ for some $h \in \vec{W}_4(n_0)^{m/n_\Gamma n_1}$,  and let $\iii_g \in W_4(n_0)^{m/n_1}\subseteq  \mathcal{I}^{m}$ be a representative word which is suffixed by the distinguished word $\iii_0$. By the defining property of $n_\Gamma$, the Zariski-closure $\overline{\langle g \rangle}^Z$ of the group $\langle g \rangle$ generated by $g$ is Zariski-connected. Since $g$ is an ($m/n_1$)-fold product of elements of $\vec{W}_4(n_0)$, and since $\vec{W}_4(n_0)$ is an $(r_0/4,\varepsilon)$-Schottky family, it follows from \eqref{eq.cartan.bound3} together with Corollary \ref{corol.additive.cartan} that we have
\begin{align}\nonumber
    \left\|\kappa(g)-mx\right\|& \leq m\beta + \left(\frac{m}{n_1} \left(M\|x\| + 3K + C_{r_0/4} \right)\right)\\\label{eq.cartan.control2}
    & \leq m\beta + \left(\frac{m}{n_0} \left(M\|x\| + 3K + C_{r_0/4} \right)\right)
\end{align}
where the constant $C_{r_0/4}$ is provided by Corollary \ref{corol.additive.cartan} and depends only on the group $G$ and the parameters $r_0$ and $\varepsilon$. We will now show that for a large enough integer $k$, the set $W_7(k)$ defined by
$$
W_7(k):=W_4(n)^{km/n_1} \cup \bigcup_{j=1}^t \left\{ \iii_{\gamma_j}^{m/m_j}\iii_g^{(k-1)}\right\}\footnote{In an earlier version, we had imposed the suffix $i_0$ at step 5 in an erroneous manner, we thank Jialun Li who spotted this and mentioned to us that it can easily be imposed at this stage.},
$$
satisfies the conditions as required.\\

\textbullet ${}$ \textit{End: Verifying \eqref{it:tech-zd}--\eqref{it:tech-suffix}:}  We begin by observing that every element of $W_7(k)$ is a word of length precisely $mk$, which is to say that $W_7(k)\subseteq \mathcal{I}^{mk}$.

Let $\Gamma_7$ denote the semigroup generated by $\vec{W}_7(k)$.  To see \eqref{it:tech-zd} we first observe that since every element of $\vec{W}_7(k)$ is an element of $\Gamma_c$, we have $\overline{\Gamma}_7^Z \subseteq G_c$. On the other hand $g^{k} \in \vec{W}_7(k)$ and since $\overline{\langle g \rangle}^Z$ is Zariski-connected, $g$ belongs to the Zariski-closure of $\Gamma_7$. 
For each $j=1,\ldots,t$ the product $\gamma_j^{m/m_j}g^{k-1}$ is an element of $\vec{W}_7(mk)$ and hence of $\overline{\Gamma_7}^Z$. Since $\overline{\Gamma_7}^Z$ is a group it follows that $\gamma_j^{m/m_j} \in \overline{\Gamma_7}^Z$. In particular $\overline{\Gamma}_7^Z$ contains the Zariski closure of the semigroup generated by $\gamma_1^{m/m_1},\ldots,\gamma_t^{m/m_t}$, which is $G_c$. We thus have $G_c \subseteq \overline{\Gamma_7}^Z \subseteq G_c$ as required to prove \eqref{it:tech-zd}. Indeed, an easy modification of this argument demonstrates that $\vec{W}_7(mk)^\ell$ generates a Zariski-dense subsemigroup of $G_c$ for every $\ell \geq 1$. To conclude that property \eqref{it:tech-schottky} also holds we simply observe that $\Gamma_7$ is a subset of $\Gamma_5$ which is a $4\varepsilon$-narrow $(r_0/32,\varepsilon)$-Schottky family, and hence $\Gamma_7$  is a $4\varepsilon$-narrow $(r_0/32,\varepsilon)$-Schottky family also. Clearly this suffices to establish \eqref{it:tech-schottky}.

To obtain \eqref{it:tech-cartan} we argue as follows. Since $\Gamma_6$ is a $(r/32, \varepsilon)$-Schottky family, for each $j=1,\ldots,t$ we have
\begin{align*}\MoveEqLeft[3]{\left\|\kappa \left(\gamma_j^{m/m_j} g^{k-1}\right)-mkx\right\|}&\\
&\leq 2C_{r_0/32}+ \|\gamma_j^{m/m_j}-mx\| + (k-1)\|\kappa(g)-mx\|\\
&< 2C_{r_0/32}+ \|\gamma_j^{m/m_j}-mx\| + km\beta + \left(\frac{km}{n_0}\left(M\|x\|+3K+C_{r_0/4}\right)\right)\end{align*}
where we have used Corollary \ref{corol.additive.cartan} and  \eqref{eq.cartan.bound3}. If $\theta$ is any element of $W_4(n)^{km/n_1}$ then in a similar manner
\[\left\|\kappa(\theta) - mkx\right\|
\leq km\beta + \left(\frac{km}{n_0} \left(M\|x\| + 3K + C_{r_0/4} +C_{r_0/32} \right)\right).
\]
It follows that if $k$ is chosen large enough that
\[\frac{1}{mk} \left(2C_{r_0/32} + \max_{1 \leq j \leq t} \left\|\gamma_j^{m/m_j}-mx\right\| \right) <\frac{\beta'-\beta}{2}\]
and also $n_0$ was chosen large enough that
\[\frac{1}{n_0} \left(M\|x\|+3K+C_{r_0/4}+C_{r_0/32}\right)<\frac{\beta'-\beta}{2} \]
then \eqref{it:tech-cartan} is satisfied. Since we are free to choose $k\geq 1$ arbitrarily, and since the desired choice of $n_0$ depends only on quantities which were available at the beginning of the argument when $n_0$ was chosen, these constraints can be met. We have established \eqref{it:tech-cartan}.

To establish \eqref{it:tech-cardinality} we note the inequalities
\[|W_7(mk)| \leq \left|W_4(n_0)\right|^{mk/n_1} + t\leq 2^{mk/n_1} e^{mkn_0\alpha/n_1} +t  \leq 2^{mk/n_0} e^{mk\alpha}+t\]
and
\[|W_7(mk)| \geq \left|W_4(n_0)\right|^{mk/n_1} \geq \left(\frac{|\mathcal{I}_{n_0}|}{N_1N_2N_3}\right)^{mk/n_1} \geq\frac{e^{n_0mk\alpha/n_1}}{(N_1N_2N_3)^{mk/n_0}}\]
arising from the construction of $W_7(k)$ together with the inequalities \eqref{eq.w.cardinality}. Now, by construction
\[ \frac{n_0}{n_1}\geq 1-\frac{M}{n_1}\geq 1-\frac{M}{n_0},\]
and it is compatible with the constraints which arose when treating \eqref{it:tech-cartan} to require that
$n_0$ is chosen large enough to satisfy
\[e^{n_0\alpha/n_1}\geq e^{\alpha(1- M/n_0)} \geq (N_1N_2N_3)^{1/n_0} e^{\alpha'}\]
and
\[2^{1/n_0}<e^{\frac{\alpha''-\alpha}{2}}\]
since these constraints too depend only on quantities which were available at the time when $n_0$ was chosen. 
Clearly we may also suppose that $k$ is chosen large enough to satisfy
\[ e^{\frac{mk(\alpha+\alpha'')}{2}}+t\leq e^{mk\alpha''}.\]
These inequalities combine to provide the required upper and lower bounds on $|W_7(k)|$.  Finally, it is clear that \eqref{it:tech-suffix} is satisfied since both the word $\iii_g$ considered in the definition of $W_7(k)$, and all elements of $W_4(n_0)$, are by construction suffixed with $\iii_0$. The proof is complete. \qed

\subsection{Proof of Corollary \ref{corol.key.with.mu}}
The result will follow from Theorem \ref{thm.key.intrinsic} once we show that for every $\varepsilon>0$, we can find a sequence of sets $\mathcal{I}_n$ of words of length $n$ with cardinality bounded from below by $e^{n(h(\mu)-\varepsilon)}$ and such that every $\iii \in \mathcal{I}_n$ satisfies $\|\frac{1}{n}\kappa(g_{\iii})-\vec{\lambda}(\mu)\|<\varepsilon$.

These will follow readily from the subadditive ergodic theorem and from the Shannon--Mcmillan--Breiman theorem as follows. By the subadditive ergodic theorem we have for $\mu$-a.e. $x \in \Sigma_{\mathcal{I}}$
$$
\lim_{n \to \infty} \frac{1}{n} \kappa(g_{x|_n}) \to  \vec{\lambda}(\mu)
$$
and by Shannon--McMillan--Breiman theorem, we have for $\mu$-a.e. $x \in \Sigma_{\mathcal{I}}$
$$
\lim_{n \to \infty} \frac{1}{n}\log \mu([x|_n]) \to -h(\mu).
$$
It follows by convergence in measure that there exists $N_0=N_0(\varepsilon)$, which may be chosen arbitrarily large, such that for every $n \geq N_0$ we may choose a measurable subset $\Omega_n \subseteq \Sigma_{\mathcal{I}}$ satisfying $\mu(\Omega_n)>1-\varepsilon$ and such that for every $x \in \Omega_n$ 
\begin{equation}\label{eq.entropy.bound}
e^{-n(h(\mu)+\varepsilon/2)} \leq  \mu\left([x|_n]\right) \leq e^{-n(h(\mu)-\varepsilon/2)}, 
\end{equation}
and
\begin{equation}\label{eq.cartan.bound}
\left\|\frac{1}{n}\kappa(g_{x|_n})-\vec{\lambda}(\mu)\right\|\leq\varepsilon.
\end{equation}
We now define $\mathcal{I}_n:=\{x|_n \colon x \in \Omega_n\}$ for every $n \geq N_0$. It follows directly from \eqref{eq.entropy.bound} that $\# \mathcal{I}_n \geq (1-\varepsilon)e^{n(h(\mu)-\varepsilon/2)} \geq e^{n(h(\mu)-\varepsilon)}$ for all large enough $n$. Together with \eqref{eq.cartan.bound} this completes the proof. \qed

%
%

\section{Proof of Theorem \ref{th:main}}\label{sec.pf.main.thm}


\subsection{Preparation}\label{subsec.extrinsic}

Before beginning the proof of Theorem \ref{th:main} we translate the results of the previous section into an extrinsic form in Proposition \ref{pr:new-prop} below.
The deduction is by standard use of representation theory of real reductive groups as exposited in \S \ref{subsec.reductive} (mainly Mostow's Lemma \ref{lemma.mostow.norms}). 

\begin{proposition}\label{pr:new-prop}
Let $\I$ be a finite set, let $(A_i)_{i \in \mathcal{I}} \in \GL_d(\R)^\I$ be completely reducible, let $\mu$ be an ergodic shift-invariant measure on $\Sigma_\I$, and let $\iii_0 \in \I^*$. Let $G$ denote the Zariski closure of the semigroup $\{A_\iii \colon \iii \in \I^*\}$.
Then for every $\varepsilon>0$ there exist $n \geq 1$ and a set $\J \subset \I^n$ such that the following properties hold:
\begin{enumerate}[(i)]
\item\label{it:pr-zd}
The Zariski closure of the semigroup $\{A_\jjj \colon \jjj \in \J^*\}$
is precisely $G_c$.
\item\label{it:pr-cardinality}
The cardinality of $\J$ is at least $e^{n(h(\mu)-\varepsilon)}$.
\item\label{it:pr-cartan}
For every $\ell \in \{1,\ldots,d\}$, we have
\[\left|\sum_{r=1}^\ell\log \sigma_r(A_\jjj) - n|\jjj|\sum_{r=1}^\ell\lambda_r((A_i)_{i \in \mathcal{I}},\mu)\right| \leq n|\jjj|\varepsilon\]
for every $\jjj\in \J^*$.
\item\label{it:pr-dom}
For every $k \in \{1,\ldots,d-1\}$ such that $\lambda_k((A_i)_{i \in \mathcal{I}},\mu)>\lambda_{k+1}((A_i)_{i \in \mathcal{I}},\mu)$,  $(A_\jjj)_{\jjj \in \J}$ is $k$-dominated.
\item\label{it:pr-suffix}
Every $\jjj \in \J$ is suffixed by $\iii_0$.
\end{enumerate}
\end{proposition}

\begin{remark}
In \eqref{it:pr-cartan} the length $|\jjj|$ of the word $\jjj$ should be understood as its length as a word over the alphabet $\J$ and not as its length as a word over the alphabet $\I$. (In the latter sense, the word $\jjj$ is $n$ times longer).
\end{remark}
\begin{remark}\label{rk.folding}
Regarding \eqref{it:pr-dom} above, it also follows from Corollary \ref{corol.key.with.mu} that the proximality index of the semigroup $\{\wedge^k A_\jjj \colon \jjj \in \J^*\}$ is equal to the proximality index of $\wedge^k G$. Moreover, if $k \in \{1,\ldots,d-1\}$ is such that $\lambda_k((A_i)_{i \in \mathcal{I}},\mu)>\lambda_{k+1}((A_i)_{i \in \mathcal{I}},\mu)$, then the proximality index of $\wedge^k G$ (and hence of $\{\wedge^k A_\jjj \colon \jjj \in \J^*\}$) is one.

Accordingly, one might hope to obtain the following stronger form of  \eqref{it:pr-dom}: if the proximality index of $\wedge^k G$ is one, then $\{A_\jjj \colon \jjj \in \J^*\}$ is $k$-dominated (or equivalently, $\{\wedge^k A_\jjj \colon \jjj \in \J^*\}$ is 1-dominated). However this outcome cannot be guaranteed in general since $\wedge^k G$ may have proximality index equal to $1$ while the two Lyapunov exponents $\lambda_k((A_i)_{i \in \mathcal{I}},\mu)$ and $\lambda_{k+1}((A_i)_{i \in \mathcal{I}},\mu)$ coincide. This can arise as a consequence of a ``folding'' phenomenon wherein the Weyl chamber of $\mathfrak{a}^+$ of $G$ is embedded piecewise affinely in that of $\GL_d(\R)$ and where $\vec{\lambda}(\mu)$ may happen to belong to a folding hyperplane in $\mathfrak{a}^+$. (For a further discussion of folding, see e.g.~ \cite[\S 3.10 \& Fig.~ 1]{breuillard-sert} where the same phenomenon appears as the reason why convexity of joint spectrum in $\GL_d(\R)$, defined in that article, may fail to hold if one only assumes irreducibility; see also the article \cite{morris-sert.etds}, where the folding phenomenon plays the role in the existence of multiple equilibrium states for the singular value potential.)

The following example illustrates this phenomenon. Consider the representation $\rho_1:\SL_2(\R) \times \SL_2(\R) \to \SL_4(\R)$ given by left and right multiplication on $\Mat_{2 \times 2}(\mathbb{R}) \simeq \R^4$, and let $\rho_2: \SL_4(\R) \to \SL_6(\R)$ be the exterior power representation on $\R^6 \simeq \wedge^2\R^4$. Then $\rho_2 \circ \rho_1: \SL_2(\R) \times \SL_2(\R) \to \SL_6(\R)$ has proximality index one and is a direct sum of the three-dimensional irreducible representations of the two $\SL_2(\R)$ factors. For $i=1,2$ let $\pi_i: \SL_2(\R) \times \SL_2(\R) \to \SL_2(\R)$ denote the representation given respectively by projection onto the first or second co-ordinate of the Cartesian product. If a shift-invariant probability measure $\mu$ is chosen on $(\SL_2(\R) \times \SL_2(\R))^\mathbb{N}$ such that the top Lyapunov exponents of the measures $\pi_1 {}_{\ast}\mu$ and $\pi_2 {}_{\ast}\mu$ are equal, then we will have $\lambda_1((\rho_2 \circ \rho_1)_\ast \mu)=\lambda_2((\rho_2 \circ \rho_1)_\ast \mu)$. Such a measure $\mu$ exists (\cite[Theorem 1.11]{breuillard-sert}) and can be chosen to be Bernoulli with finite support, such that the support generates a Zariski-dense subsemigroup of $\SL_2(\R) \times \SL_2(\R)$ (\cite[Th\'{e}or\`{e}me 3.5]{thi.thesis}). 

\end{remark}


\begin{proof}
Let $G$ be the Zariski-closure of the semigroup generated by $\{A_i : i \in \mathcal{I}\}$ and $G_c$ the real points of its connected component. Let $h(\mu)$ and $\vec{\lambda}(\mu)$ denote respectively the metric entropy of $\mu$ with respect to the shift $\sigma$ and the Lyapunov vector of $\mu$ belonging to a chosen Weyl chamber $\mathfrak{a}^+$ in a Cartan subspace $\mathfrak{a}$ in the Lie algebra $\mathfrak{g}$ of $G$. By complete reducibility, the group $G$ is a real reductive group and we are in a position to apply Corollary \ref{corol.key.with.mu}. We find that there exists a constant $r>0$ such that for every $\varepsilon \in (0,r)$ and $\iii_0 \in \mathcal{I}^\ast$, there exists $n \in \N$ and a subset $\mathcal{J} \subseteq \mathcal{I}^n$ satisfying the conclusions \eqref{it:co-zd}--\eqref{it:co-suffix} of Corollary \ref{corol.key.with.mu}. The conclusions \eqref{it:pr-zd}, \eqref{it:pr-cardinality} and \eqref{it:pr-suffix} follow directly, and \eqref{it:pr-dom} is an immediate consequence of \eqref{it:pr-cartan} as long as $\varepsilon$ is chosen sufficiently small (specifically, as long as $2\varepsilon$ is strictly smaller than every nonzero difference between pairs of successive Lyapunov exponents). We thus need only to prove \eqref{it:pr-cartan}. This will follow from \eqref{it:co-cartan} of Corollary \ref{corol.key.with.mu} thanks to the representation theoretic Lemma \ref{lemma.mostow.norms}. To establish  \eqref{it:pr-cartan} we will derive the estimate
\begin{equation}\label{eq:simple-iii}\left|\sum_{r=1}^\ell\log \sigma_r(A_\jjj) - n|\jjj|\sum_{r=1}^\ell\lambda_r((A_i)_{i \in \mathcal{I}},\mu)\right| \leq n|\jjj|\cdot \frac{\varepsilon}{2} + C_0\ell\end{equation}
where the constant $C_0$ is independent of $\ell$, $n$ and $\varepsilon$, and as long as $n$ was chosen sufficiently large this implies the result in the form claimed in the statement.

Let $(\chi_i)_{i=1,\ldots,d}$ denote the weights of the given representation, let us denote it by $\rho$, of $G_c$ on $\R^d$. By Lemma \ref{lemma.mostow.norms} applied to each irreducible subrepresentation of $\rho$, the collection of Lyapunov exponents (with multiplicities) of $\mu$ is given by the collection of evaluations,  with multiplicities, of $\vec{\lambda}(\mu)$ on all weights of $\rho$. In other words, we have the following equality of tuples, up to relabeling of indices,
\begin{equation}\label{eq.list.lyapunov}
    (\lambda_i((A_i)_{i \in \mathcal{I}},\mu))_{i=1,\ldots, d}=(\chi_i(\vec{\lambda}(\mu)))_{i=1,\ldots, d}.
\end{equation}

By the same Lemma \ref{lemma.mostow.norms}, for every $\jjj \in \mathcal{J}^\ast$, all logarithms of singular values of $g_\jjj$ are given by the values obtained by  the collection $(\chi_i)_{i=1,\ldots,d}$ of linear forms in $\mathfrak{a}^\ast$ evaluated at $\kappa(g_\jjj)$, up to a bounded additive constant $C_0$ (depending only on the representation $\rho$ of $G_c$) -- this appears due to possible change of the Euclidean structure in Lemma \ref{lemma.mostow.norms}.
Written differently, we have the following equality of tuples, up to a relabeling of indices,
\begin{equation}\label{eq.singulars}
    \left(\log \sigma_i( g_\jjj)\right)_{i=1,\ldots, d}=(\chi'_{i}(\kappa(g_\jjj)))_{i=1,\ldots,d}.
\end{equation}
where $\chi'_{i}(\kappa(g_\jjj))$'s are reals satisfying $|\chi_{i}(\kappa(g_\jjj))-\chi'_{i}(\kappa(g_\jjj))| \leq C_0$ for every $i=1,\ldots,d$.
The inequality \eqref{eq:simple-iii} now follows by combining \eqref{eq.list.lyapunov} and \eqref{eq.singulars} with \eqref{it:co-cartan} of Corollary \ref{corol.key.with.mu}, where the corollary is applied with $\varepsilon/2$ in place of $\varepsilon$. 
\end{proof}
The following lemma, which will be used to treat the final clause of Theorem \ref{th:main}, appeared previously in \cite{MoSh19} in a slightly different form. For completeness we include a proof.
\begin{lemma}\label{le:pablo}
Let $(T_i)_{i \in \I}$ be a finite collection of affine transformations of $\R^d$ all of which are contracting with respect to some fixed norm. Suppose that the strong open set condition holds for $(T_i)_{i \in \I}$. Then there exists a finite word $\iii_0 \in \Sigma_\I$ such that for every $m \geq 1$ and every nonempty set $\J\subset \I^m$, the iterated function system $(T_{\jjj \iii_0})_{\jjj \in \J}$ satisfies the strong separation condition.
\end{lemma}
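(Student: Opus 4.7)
The plan is to choose $\iii_0$ to be a sufficiently long prefix of an address of some point lying in $Z \cap U$, where $U$ is the open set given by the SOSC, so that the contraction $T_{\iii_0}$ pushes all of $\overline U$ strictly inside $U$. This margin is exactly what is needed to upgrade the open set condition (which the words of length $m$ already enjoy) into the strong separation condition for the words $\jjj\iii_0$.

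First I would fix a bounded open set $U\subset \R^d$ provided by the SOSC, so that $T_i(U)\subset U$ for $i\in\I$ are pairwise disjoint and $U\cap Z\neq\emptyset$, where $Z$ is the attractor. A routine induction on word length shows that $T_\iii(U)\subset U$ for every $\iii\in\I^*$ and that for any $n\geq 1$ and distinct $\iii,\iii'\in\I^n$ the images $T_\iii(U)$ and $T_{\iii'}(U)$ are disjoint; in particular $Z\subset\overline{U}$. I would then pick a point $z\in Z\cap U$ together with a radius $r>0$ with $B(z,r)\subset U$.

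Next I would produce an address $x=(x_k)_{k=1}^\infty\in\Sigma_\I$ such that $T_{x|_n}(y)\to z$ for every $y\in\R^d$; such an address exists by the standard coding of points of the attractor via $Z=\bigcup_{i\in\I}T_iZ$, and the convergence is uniform in $y$ over $\overline U$ because every $T_i$ is a strict contraction in the fixed norm, so $\diam(T_{x|_n}(\overline U))\to 0$ exponentially. Consequently, for all sufficiently large $N$ one has
\[T_{x|_N}(\overline{U})\subset B(z,r)\subset U.\]
I then set $\iii_0:=x|_N$ for such an $N$.

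Finally, given arbitrary $m\geq 1$ and nonempty $\J\subset\I^m$, I set $X:=\overline U$, which is compact. For each $\jjj\in\J$,
\[T_{\jjj\iii_0}(X) \;=\; T_\jjj\bigl(T_{\iii_0}(\overline U)\bigr)\;\subset\; T_\jjj(U)\;\subset\; U\;\subset\; X,\]
so each map $T_{\jjj\iii_0}$ sends $X$ into itself. For distinct $\jjj,\jjj'\in\J\subset\I^m$ the iterated open set condition gives $T_\jjj(U)\cap T_{\jjj'}(U)=\emptyset$, and since $T_{\jjj\iii_0}(X)\subset T_\jjj(U)$ and similarly for $\jjj'$, the images $\{T_{\jjj\iii_0}(X)\}_{\jjj\in\J}$ are pairwise disjoint. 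Hence $(T_{\jjj\iii_0})_{\jjj\in\J}$ satisfies the strong separation condition with compact set $X$. The only mildly non-trivial step is verifying the existence of the address $x$ with $T_{x|_n}(y)\to z$ and the iterated disjointness of $T_\iii(U)$; both are standard in Hutchinson's formalism, and no serious obstacle arises. Crucially, $\iii_0$ depends only on the original IFS and the set $U$, and thus is uniform over all $m$ and $\J$.
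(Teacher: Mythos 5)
Your proposal is correct and follows essentially the same route as the paper: both arguments construct a word $\iii_0$ with $T_{\iii_0}(\overline{U})\subset U$ (the paper by covering the attractor with the sets $T_{\iii}X$, $\iii\in\I^n$, for $n$ with $\tau^n\diam U<\kappa$; you by following the address of a point of $Z\cap U$) and then conclude from the pairwise disjointness of the sets $T_{\jjj}U$. The only cosmetic difference is that you certify the strong separation condition with the compact set $\overline{U}$ rather than with the attractor of the subsystem, which is equally valid.
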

\begin{proof}
Let $\threebar{\cdot}$ be a norm on $\R^d$ such that every $T_i$ is contracting with respect to $\threebar{\cdot}$ and let $X \subset \R^d$ denote the attractor of $(T_i)_{i \in \I}$. Throughout this proof we equip $\R^d$ with the metric induced by $\threebar{\cdot}$. Choose $\tau \in (0,1)$ such that $\threebar{T_ix-T_iy} \leq \tau\threebar{x-y}$ for all $x,y \in \R^d$ and $i \in \I$. By the strong open set condition there exists a nonempty open set $U \subset \R^d$ such that the sets $T_iU$ for $i \in \I$ are pairwise disjoint subsets of $U$ and such that additionally $X \cap U$ is nonempty. It is straightforward to see that these properties are retained if $U$ is replaced by the intersection of $U$ with a sufficiently large open ball centred at the origin, so without loss of generality we may assume additionally that $U$ is bounded. Now choose $x_0 \in X \cap U$ and $\kappa>0$ such that the open ball around $x_0$ with radius $\kappa$ is a subset of $U$, and choose $n\geq 1$ such that $\tau^n \diam U<\kappa$. By iteration of the relation $\bigcup_{i\in \I}T_iX=X$ we have $\bigcup_{\iii \in \I^n} T_\iii X=X$, so in particular there exists $\iii_0 \in \I^n$ such that $x_0 \in T_{\iii_0} X$. We now note that $x_0 \in T_{\iii_0}\overline{U}$ and that $\diam T_{\iii_0}\overline{U}<\kappa$, so in particular $T_{\iii_0}\overline{U}\subset U$.

Now let $m \geq 1$ and $\J\subset \I^m$ be arbitrary and let $Z$ denote the attractor of $(T_{\jjj \iii_0})_{\jjj \in \J}$. If $\jjj_1, \jjj_2 \in \J$ are distinct words, write 
$\jjj_1=\kkk j_1\lll_1$ and $\jjj_2= \kkk j_2\lll_2$ where $j_1, j_2 \in \I$ are distinct and where each of $\kkk$, $\lll_1$ and $\lll_2$ is either an element of $\I^*$ or the empty word. We then have 
\begin{align*}T_{\jjj_1 \iii_0}Z \cap T_{\jjj_2 \iii_0}Z &= T_\kkk \left(T_{j_1} T_{\lll_1} T_{\iii_0}Z \cap T_{j_2} T_{\lll_2} T_{\iii_0}Z\right)\\
&\subseteq  T_\kkk \left(T_{j_1} T_{\lll_1} T_{\iii_0}\overline{U} \cap T_{j_2} T_{\lll_2} T_{\iii_0}\overline{U}\right)\\
&\subseteq  T_\kkk \left(T_{j_1} T_{\lll_1}U \cap T_{j_2} T_{\lll_2}U\right)\subseteq T_\kkk(T_{j_1}U \cap T_{j_2}U)=\emptyset\end{align*}
so that $T_{\jjj_1 \iii_0}Z$ and $T_{\jjj_2 \iii_0}Z$ are disjoint, and this proves the strong separation condition.\end{proof}

\subsection{Proof of Theorem \ref{th:main}}
Fix $\delta>0$ throughout the proof, and choose a positive, non-integer real number $s$ such that
\[ \dimaff (T_i)_{i\in\I} -\delta< s < \dimaff (T_i)_{i \in \I}.\]
It follows from Proposition \ref{pr:aff-fund} that $P((A_i)_{i \in \I}, s)$ is strictly positive, so we may choose $\varepsilon>0$ small enough that 
\[P((A_i)_{i \in \I}, s)-4\varepsilon>0.\]
Let $\mu \in \mathcal{M}_\sigma(\Sigma_\I)$ be an ergodic $\varphi^{s}$-equilibrium state for $(A_i)_{i \in \I}$ and let $\mathsf{k}_0$ denote the set of all integers $k$ in the range $1,\ldots,d-1$ such that
\[\lambda_k((A_i)_{i\in \I}; \mu) > \lambda_{k+1}((A_i)_{i\in \I}; \mu)\]
or such that equivalently
\[\lambda_1\left(\left(\wedge^k A_i\right)_{i\in \I}; \mu\right) > \lambda_2\left(\left(\wedge^k A_i\right)_{i\in \I}; \mu\right).\]
If $(T_i)_{i \in \I}$ satisfies the strong open set condition let $\iii_0\in \I^*$ be the word provided by Lemma \ref{le:pablo}; otherwise, let $\iii_0 \in \I^*$ be fixed but arbitrary.

By Proposition \ref{pr:new-prop} applied to $(A_i)_{i \in \I}$, $\iii_0$, $\mu$ and $G$ there exist $m_0 \geq 1$ and $\J_0 \subset \I^{m_0}$ such that: the Zariski closure of $\{A_\kkk \colon \kkk \in \J_0^*\}$ is precisely $G_c$; the uniform Bernoulli measure $\nu_0$ on $\Sigma_{\J_0}$ has entropy at least $m_0(h(\mu)-\varepsilon)$; for every $k=1,\ldots,d$ we have
\begin{equation}\label{eq:1-dom-first}\left|\frac{1}{|\jjj|}\sum_{\ell=1}^{k}\log \sigma_\ell(A_\jjj) - m_0\sum_{\ell=1}^{k}\lambda_\ell((A_i)_{i\in \I};\mu)\right| \leq m_0\varepsilon\end{equation}
for all $\jjj \in \J_0^*$; for every $k \in \mathsf{k}_0$,  $(\wedge^k A_\jjj)_{\jjj \in \J_0}$ is $1$-dominated; and every element of $\J_0$ has $\iii_0$ as a suffix. In particular, by \eqref{eq:1-dom-first} it follows directly that
\begin{equation}\label{eq:lyap-gap-first}
\sum_{\ell=1}^{k}\lambda_\ell((A_\jjj)_{\jjj\in \J_0};\nu_0)\geq m_0\left(\sum_{\ell=1}^{k}\lambda_\ell((A_i)_{i\in \I};\mu)-\varepsilon\right)
\end{equation}
for all $k=1,\ldots,d$. This already suffices to establish most of the conclusions of Theorem \ref{th:main}, but at the moment we have no explicit knowledge of exactly which integers $k$ have the property that $(A_\jjj)_{\jjj \in \J_0}$ is $k$-dominated. To remedy this we shall apply Proposition \ref{pr:new-prop} a second time, using instead the measure $\nu_0$.

Let $\mathsf{k}$ denote the set of all $k \in \{1,\ldots,d-1\}$ such that 
\[\lambda_k((A_\jjj)_{\jjj\in \J_0}; \nu_0) > \lambda_{k+1}((A_\jjj)_{\jjj\in \J_0}; \nu_0).\]
By Theorem \ref{th:gu-ra}, if $(\wedge^k A_\jjj)_{\jjj \in \J_0}$ is proximal and strongly irreducible for some integer $k \in \{1,\ldots,d-1\}$ then
\[\lambda_1\left((\wedge^k A_\jjj)_{\jjj\in \J_0}; \nu_0\right) > \lambda_2\left((\wedge^k A_\jjj)_{\jjj\in \J_0}; \nu_0\right)\]
for that integer $k$, which in particular implies that $k \in \mathsf{k}$. Let $\mathsf{k}_1 \subset \{1,\ldots,d-1\}$ denote the set of all $k$ such that $(\wedge^k A_i)_{i \in \I}$ is strongly irreducible and proximal; by Theorem \ref{th:gm} (and the fact that $G_c$ is finite-index in $G$), this is precisely the set of all $k$ such that the exterior power representation $\wedge^k \colon G_c \to \GL(\wedge^k \R^d)$ is irreducible and proximal, and this in turn is precisely the set of all $k$ such that $(\wedge^k A_\jjj)_{\jjj \in \J_0}$ is strongly irreducible and proximal.
It follows from the preceding remarks that $\mathsf{k}_1\subset \mathsf{k}$. 

Now apply Proposition \ref{pr:new-prop} to $(A_\jjj)_{\jjj \in \J_0}$, an arbitrary word $\jjj_0 \in \J$, the uniform Bernoulli measure $\nu_0 \in \mathcal{M}_\sigma(\Sigma_{\J_0})$ and the group $G_c$. This yields an integer $m_1 \geq 1$ and a set $\mathcal{J} \subset\J_0^{m_1}\simeq \I^{m_0m_1}$ with the following properties: the Zariski closure of $\{A_\kkk \colon \kkk \in \J^*\}$ is precisely $G_c$; the uniform Bernoulli measure $\nu$ on $\Sigma_{\J}$ has entropy at least $m_1(h(\nu_0)-\varepsilon)$; for every $k =1,\ldots,d$ the inequality
\begin{equation}\label{eq:lyap-gap-second}
\sum_{\ell=1}^{k}\lambda_\ell((A_\jjj)_{\jjj\in \J};\nu)\geq m_1\left(\sum_{\ell=1}^{k}\lambda_\ell((A_\jjj)_{\jjj\in \J_0};\nu_0)-\varepsilon\right)
\end{equation}
is satisfied; for every $k \in \mathsf{k}$ the tuple $(\wedge^k A_\jjj)_{\jjj \in \J}$ is $1$-dominated; and every $\jjj \in \J$ has $\jjj_0$ as a suffix. Trivially every $\jjj \in \J$ has $\iii_0$ as a suffix (when identified with an element of $\I^*$) since $\jjj_0 \in \J_0$. 

We now claim that $\J$ satisfies properties \eqref{it:uniform}--\eqref{it:strongsep}. It is immediate from the preceding statements that \eqref{it:zar} holds, and also \eqref{it:gaps} follows since $\mathsf{k}_1\subseteq \mathsf{k}$. If $(T_i)_{i \in \I}$ does not satisfy the strong open set condition then \eqref{it:strongsep} is vacuously true, and otherwise $(T_\jjj)_{\jjj \in \J}$ satisfies the strong separation condition by Lemma \ref{le:pablo} and the fact that every word $\jjj \in \J\simeq \I^{m_1m_0}$ is suffixed by $\iii_0$ when considered as an element of $\iii_0$. It remains only to establish \eqref{it:uniform}. By Proposition \ref{pr:aff-fund}  it suffices to show that $P((A_\jjj)_{\jjj \in \J}, s)>0$, and by Proposition \ref{pr:yadda} this will follow if we show that
\[h(\nu) + \sum_{\ell=1}^{\lfloor s\rfloor} \lambda_\ell ((A_\jjj)_{\jjj \in \J_0}; \nu) + (s-\lfloor s\rfloor) \lambda_{\lceil s\rceil} ((A_\jjj)_{\jjj \in \J_0}; \nu)>0.\]
But this is now straightforward. The entropy $h(\nu)$ may be estimated by combining directly the inequalities arising from the two applications of Proposition \ref{pr:new-prop}:
\begin{equation}\label{eq:entryism}h(\nu) \geq m_1(h(\nu_0)-\varepsilon) \geq m_1(m_0(h(\mu)-\varepsilon)-\varepsilon) \geq m_0m_1(h(\mu)-2\varepsilon).\end{equation}
Similarly by the combination of \eqref{eq:lyap-gap-first} and \eqref{eq:lyap-gap-second}, 
\begin{align*}
\nonumber\sum_{\ell=1}^{\lfloor s\rfloor}\lambda_\ell((A_\jjj)_{\jjj \in \J}; \nu) &\geq m_1 \left(\sum_{\ell=1}^{\lfloor s\rfloor}\lambda_\ell((A_\jjj)_{\jjj \in \J_0}; \nu_0)-\varepsilon\right)\\\nonumber
&\geq m_0\left( m_1\left(\sum_{\ell=1}^{\lfloor s\rfloor }\lambda_\ell((A_\iii)_{\iii \in \I}; \mu)-\varepsilon\right)-\varepsilon\right)\\
&\geq m_0m_1\left(\sum_{\ell=1}^{\lfloor s\rfloor}\lambda_\ell((A_\iii)_{\iii \in \I}; \mu)-2\varepsilon\right),
\end{align*}
and similarly
\[\sum_{\ell=1}^{\lceil s\rceil}\lambda_\ell((A_\jjj)_{\jjj \in \J}; \nu)\geq m_0m_1\left(\sum_{\ell=1}^{\lceil s\rceil}\lambda_\ell((A_\iii)_{\iii \in \I}; \mu)-2\varepsilon\right).\]
Taking a convex combination of the two inequalities above it follows that the quantity
\begin{equation}\label{eq:that-thing-we're-interested-in}\sum_{\ell=1}^{\lfloor s\rfloor} \lambda_\ell((A_\jjj)_{\jjj \in \J}; \nu) + (s-\lfloor s\rfloor)\lambda_{\lceil s\rceil}((A_\jjj)_{\jjj \in \J}; \nu)\end{equation}
is bounded below by
\[m_0m_1\left( \sum_{\ell=1}^{\lfloor s\rfloor} \lambda_\ell((A_\iii); \mu) + (s-\lfloor s\rfloor)\lambda_{\lceil s\rceil}((A_\iii); \mu) -2\varepsilon\right).\]
Combining the lower bound \eqref{eq:entryism} on the entropy of $\nu$ with the preceding lower bound on \eqref{eq:that-thing-we're-interested-in} yields
\begin{align*}
 h(\nu) + \Lambda_s\left((A_\jjj)_{\jjj \in \J}; \nu\right) 
&\geq m_0m_1\left(h(\mu) + \Lambda_s((A_i)_{i \in \I}; \mu) - 4\varepsilon\right)\\
&=m_0m_1\left(P\left((A_i)_{i\in\I}; s\right) - 4\varepsilon\right)>0
\end{align*}
where we have used the fact that $\mu$ is a $\varphi^s$-equilibrium state for $(A_i)_{i \in \I}$. It follows that $\dimlyap ((A_\jjj)_{\jjj \in \J}; \mu) >s>\dimaff(A_i)_{i \in \I}-\delta$ as required to establish \eqref{it:uniform}. The proof is complete.

\section{Acknowledgements}

The research of I.D. Morris was partially supported by the Leverhulme Trust (Research Project Grant RPG-2016-194). C. Sert's research was supported by SNF grants 178958, 182089, SNF Ambizione 193481, the University of Zurich and the University of Warwick. I.D. Morris thanks ETH Zurich for its hospitality during the initial visit at which this project commenced. The authors thank Jialun Li for a very careful reading of an earlier version of this manuscript, and also thank Ariel Rapaport for additional helpful comments. 


\end{document}